\documentclass[12pt]{amsart}

\usepackage{subfig}
\usepackage[T1]{fontenc}
\usepackage[utf8]{inputenc}
\usepackage{fourier}
\usepackage{caption}
\usepackage[scaled=0.875]{helvet} 
 
\usepackage{amsmath,amssymb}
\usepackage[normalem]{ulem}
\usepackage{fancybox,graphicx}
\usepackage{enumerate}
\usepackage{eucal}
\usepackage{graphicx}
\usepackage{tabularx}
\usepackage{ulem}
\usepackage{xcolor}
\usepackage{dcolumn}
\usepackage{textcomp}
\usepackage{enumitem}
\usepackage{diagbox}
\usepackage{tabularx}
\usepackage{pgfplots}
\usepackage{mathrsfs}
\usetikzlibrary{arrows}
\pgfplotsset{compat=1.15}
\usepackage{lscape}
\usepackage{variations}
\usepackage{fancyhdr}
\usepackage{hyperref}
\usepackage[english]{babel}
\usepackage[np]{numprint}
\usepackage{booktabs}
\usepackage{amsthm}
\usepackage{amsmath}

\usetikzlibrary[patterns]

\usepackage{pgf,tikz}
\usetikzlibrary {shapes.geometric, calc}

\DeclareMathOperator\supp{supp}

\newcommand{\cc}{\mathcal{C}}

\newcommand{\s}{\mathcal{S}}

\newcommand{\R}{\mathbb{R}}

\newcommand{\Z}{\mathbb{Z}}

\newcommand{\C}{\mathbb{C}}

\newcommand{\norm}[1]{{\left\|{#1}\right\|}}

\newcommand{\abs}[1]{{\left|{#1}\right|}}
\newcommand{\scal}[1]{{\left\langle{#1}\right\rangle}}

\newcommand{\interior}[1]{%
  {\kern0pt#1}^{\mathrm{o}}%
}

\newcommand{\dst}{\displaystyle}

\newcommand{\ffi}{\varphi}
\newcommand{\eps}{\varepsilon}
\definecolor{qqqqff}{rgb}{0.,0.,1.}
\definecolor{ffqqqq}{rgb}{1.,0.,0.}
\definecolor{rvwvcq}{rgb}{0.08235294117647059,0.396078431372549,0.7529411764705882}
\definecolor{xfqqff}{rgb}{0.4980392156862745,0.,1.}
\definecolor{wwccqq}{rgb}{0.4,0.8,0.}
\definecolor{uuuuuu}{rgb}{0.26666666666666666,0.26666666666666666,0.26666666666666666}

\newtheorem{theorem}{Theorem}[section]
\newtheorem{definition}[theorem]{Definition}
\newtheorem{lemma}[theorem]{Lemma}

\newtheorem{proposition}[theorem]{Proposition}

\newtheorem*{theorem*}{Theorem}
\theoremstyle{remark}

\counterwithin{equation}{section}
\begin{document}

\title[Three Balls Inequality for Discrete Schrödinger Operator]{On the Three Balls Inequality for Discrete Schrödinger Operators on Certain Periodic Graphs}

\author{Yann Bourroux}
\address{Facultad de Ciencia y Tecnolog\'ia, Universidad del Pa\'is Vasco /Euskal Herriko Unibertsitatea (UPV/EHU), Departamento de Matem\'aticas, UPV/EHU, Apartado 644, 48080 Bilbao, Spain\\ \& Univ. Bordeaux, CNRS, Bordeaux INP, IMB, UMR 5251, F-33400 Talence, France}
\email{yann.bourroux@math.u-bordeaux.fr}

\author{Aingeru Fern\'andez-Bertolin}
\address{ Facultad de Ciencia y Tecnolog\'ia, Universidad del Pa\'is Vasco /Euskal Herriko Unibertsitatea (UPV/EHU), Departamento de Matem\'aticas, UPV/EHU, Apartado 644, 48080 Bilbao, Spain}
\email{aingeru.fernandez@ehu.eus}

\author{Philippe Jaming}
\address{Univ. Bordeaux, CNRS, Bordeaux INP, IMB, UMR 5251, F-33400 Talence, France}
\email{philippe.jaming@math.u-bordeaux.fr}



\keywords{Discrete magnetic Schr\"odinger operators, Carleman estimates, three balls inequalities, periodic graphs, hexagonal lattice}


\subjclass[2020]{39A12}

\begin{abstract}
We investigate quantitative unique continuation properties for discrete magnetic Schrödinger operators in certain periodic graphs. This unique continuation property will be quantified through what is known in the literature as a Three Balls Inequality. We are able to extend this inequality to another family of periodic graph which contains the Hexagonal lattice. We also give a sketch of the proof for general star periodic graph.
Our proofs are based on Carleman estimates.
\end{abstract}

\maketitle

\tableofcontents

\section{Introduction}

The aim of this article is to prove a Carleman inequality for discrete magnetic Schrödinger operators on a class of periodic graphs
beyond the square lattice. As an application we
obtain a quantitative uni\-que continuation property for discrete harmonic functions on those graphs, where the quantification is given through a so-called Three Balls Inequality. This inequality is then extended to a larger class of periodic graphs that also includes the hexagonal lattice.

Generally speaking, such an inequality has the following form: consider a solution $f$ of a PDE or a difference equation written as $Pf=0$ over some set $\Omega$.
We assume that $\Omega$ is endowed with some natural distance and denote by $B_r$ a ball of radius $r$
and $\|\cdot\|_{B_r}$ a norm of the restriction of $f$ to this ball. For instance, one may consider the $L^\infty$ norm
over the ball, $\|f\|_{B_r}=\sup_{x\in B_r}|f(x)|$. A Three Balls Inequality is then a way
to bound the norm of $f$ on some ball, say $B_1$, by the norm of $f$ on a smaller and on a larger ball, say $B_{1/2}$
and $B_2$. A typical inequality of that type would then take the form
\begin{eqnarray}
    \label{eq:3ballintro}
\|f\|_{B_1}\leq C_\alpha\|f\|_{B_{1/2}}^\alpha \|f\|_{B_2}^{1-\alpha}
\end{eqnarray}
for some $\alpha\in(0,1)$. We can also see this property as a propagation of smallness property, 
since if $\|f\|_{B_{1/2}}$ is small, say $\leq \eps$ and if we have a global bound of $f$ on $B_2$,
say $\|f\|_{B_2}\leq M$ then $f$ stays relatively small on $B_1$ since 
\eqref{eq:3ballintro} implies that $\|f\|_{B_1}\leq C_\alpha M^{1-\alpha}\eps^\alpha$.

Such inequalities are known for some time. For instance, if we consider $P=\Delta$ the Laplacian on $\R^d$, the Three Balls Inequality \eqref{eq:3ballintro} has been established by J. Korevaar and J.\,L.\,H. Meyers in \cite{korevaar_logarithmic_1994}
for harmonic functions $\Delta f=0$, both when $\|f\|_{B_r}$ is the $L^\infty$-norm 
$\|f\|_{B_r}=\sup_{B_r}|f|$ and also in the $L^2$-case when $\dst\|f\|_{B_r}=\left(\int_{B_r}|f(x)|^2\,\mbox{d}x\right)^{1/2}$.
It is also well known that this unique continuation property is satisfied when $\Delta$
is replaced by the Schrödinger operator $\Delta+B\cdot \nabla+V$, for certain classes of potentials $B$ and $V$, see for example \cite{kurata_unique_1997}. 

\smallskip

In the discrete setting, the situation is more complicated. Let us start with the simplest case
of the square lattice $\Z^d$ and the naturally associated graph Laplacian $\Delta_d$. 
In this case, Three Balls Inequalities in the form \eqref{eq:3ballintro} do not hold.
This situation is a bit awkward as, if one considers the square lattice $(h\mathbb{Z})^d$ with a mesh of size $h$
instead of $\Z^d$, then the graph Laplacian is a natural discretization of the Euclidean Laplacian in the sense
that, if $f$ is a $\cc^2$ function on $\R^d$ then $h^{-2}\Delta_{h\Z^d}f\to\Delta f $ when $h\to 0$.
One thus expects that the Three Balls Inequality should hold at least up to an error term when $h$ is small enough.
This has indeed been proved by M. Guadie and E. Malinnikova \cite{guadie_three_2014}: {\em there
exist $C,\delta>0$, $0<\alpha<1$ and $h_0$
such that if $0<h<h_0$, then for every $\cc^2$ function $f$ on $\R^d$ whose restriction to $h\Z^d$
satisfies $\Delta_{h\Z^d}f=0$, we have}
\begin{eqnarray}
    \label{eq:3ballintrocor}
\|f\|_{L^\infty(B_1)}\leq C\|f\|_{L^\infty(B_{1/2})}^\alpha \|f\|_{L^\infty(B_2)}^{1-\alpha}
+e^{-\delta/\sqrt{h}}\|f\|_{L^\infty(B_2)}.
\end{eqnarray}
Note that the error term $e^{-\delta/\sqrt{h}}\|f\|_{L^\infty(B_2)}$ goes to zero as the mesh size $h\to0$
{\it i.e.} when approaching the continuous regime. This error term is also shown to be optimal.
The proof relied on an explicit formula for the Poisson kernel of a discrete harmonic function
and is thus difficult to extend to more general settings, for instance when adding a potential to the Laplacian.

This difficulty has been overcome by the second author together with L. Roncal, A. R\"uland \& D. Stan 
\cite{fernandez-bertolin_discrete_2021} who took a different approach. They first establish a Carleman estimate
and then deduce the Three Balls Inequality from it. In particular, this allowed them to establish
\eqref{eq:3ballintrocor} but with $L^2$-norms instead of $L^\infty$-norms.

\smallskip

Our aim here is to extend this result to a larger class of graphs. The graphs we consider are known as
periodic graphs. Roughly speaking (precise definitions will be given in the next section), they consist of a finite graph whose vertices are vectors in $\R^d$
and this graph is then translated by Lattice elements in such a way that the edges are periodic.
The simplest such graphs are the lattices $A\Z^d$ with $A$ invertible and the edges are just the nearest neighbors. The hexagonal lattice is another
familiar example in this class.

Such graphs are key to understanding a wide range of applications including network design or physical systems like molecules and crystals.
To be more precise, structures similar to periodic graphs play a key role as candidates for the fundamental microscopic building block of space, see \cite{konopka_quantum_2008} or \cite{hamma_quantum_2010}. Crystal material property prediction is also important in modern physic as a way to discover new materials with good properties and it happens that crystals are studied as periodic graphs \cite{yan_periodic_2022}.

Mathematically, the set of vertices $\mathcal{V}$ and the edges $\mathcal{E}$ of a periodic graph $\Gamma$ are subsets of $\R^d$. Therefore, there is a natural scaling that we denote by
$h\Gamma$ and call $h$ the mesh size. We then consider the following operator,
\begin{equation}
    H_h f(x)=h^{-2}\Delta_{h\Gamma} f(x) + V(x)f(x) + h^{-1}\sum_{j=1}^k B(x)Df(x),
\end{equation}
where $f: h\mathcal{V} \mapsto \R$, $\Delta_{h\Gamma}$ is the discrete Laplace operator on the periodic graph $h\Gamma$ (see the definition below), $Df(x)$ is a difference operator that depends on the periodic graph $\Gamma$ (similar to the gradient in the continuous setting), $V: h\mathcal{V} \mapsto \R$ is a uniformly bounded potential and $B:h\mathcal{V} \mapsto \R^d$ is a uniformly bounded tensor field. 
Schrödinger operators on periodic discrete graphs have attracted a lot of attention due to their applications to the
study of electronic properties of crystalline structures, {\it see e.g.} \cite{NGMJZDGF04},
the book \cite{Ha02} and the survey \cite{CNGPNG09}. There is a vast mathematical literature as well, focusing mainly on spectral properties of those operators, {\it see e.g.}\cite{ando_spectral_2015,ando_inverse_2018,badanin_laplacians_2013,badanin_laplacians_2013,KS,Sa24}.

Another motivation behind the study of the operator $H_h$ on periodic graphs comes from the fact that, for certain graphs, 
the discrete Laplacian $\Delta_{h\Gamma}$ tends to approximate the continuous Laplacian $\Delta$ when the mesh size $h$ tends to $0$. For example, the Laplacian of the Square lattice is the usual discretization of the Laplacian in $\Z^d$.
Also, on the hexagonal lattice $\mathcal{H}$, if $g$ is a polynomial of degree 2 in two variables,
$$
g(x,y)=ax^2+bxy+cy^2+dx+ey+f,
$$
then
\begin{equation*}
    h^{-2}\Delta_\mathcal{H} g(xh,yh)=\frac{3}{2}\Delta g(x,y).
\end{equation*}
Hence the approximation error done, by approximating the continuum with the Hexagonal lattice, is of order $h^3$. 

In the free case, $B=0$, $V=0$, A. Bou-Rabee, W. Cooperman and S. Ganguly \cite{bou-rabee_unique_2023}
established the Three Balls Inequality in the spirit of \eqref{eq:3ballintrocor} for all planar periodic graphs by 
removing the need of computing the Poisson kernel. However, their result is in the $L^\infty$ norm, not in the $L^2$ one
and further seems not to extend to the more general cases $B\not=0$ or $V\not=0$ as the proof relies essentially on
a geometric interpretation of the Laplacian that is lacking when adding a potential.

We will overcome this by adapting the strategy of \cite{fernandez-bertolin_discrete_2021} to the larger setting of periodic graphs.
The first step consists in proving a discrete Carleman Inequality (Theorem \ref{Carleman:1}).
The Carleman weight we will use is an adaptation
of the weight used in \cite{koch_variable_2016,fernandez-bertolin_discrete_2021} where we replace the Euclidean norm with a norm $|.|_\Gamma$ adapted to the graph $\Gamma$. Also note that, contrary to the continuous case, the Carleman parameter $\tau$ is bounded in such a way that $\tau\lesssim h^{-1}$. When the mesh size tends to $0$, we get closer and closer to the continuous case. This condition is the key behind the error term that appears in discrete Three Balls Inequalities.
This strategy works in all dimensions but
unfortunately not for all graphs. The key difficulty
is that at one stage of the proof, we need to go to the Fourier side, reformulating one step in terms of a Fourier multiplier that one needs to bound from below.
There is a particular family of graphs that we call {\em 1-point periodic graphs}
for which we obtain such lower bounds as the
Fourier multiplier we have to deal with is complex valued. In the general case, this Fourier multiplier
is vector valued and lower bounds seem not to be available. This explains the restriction on the graphs we have to impose to obtain a Carleman estimate.

This Carleman estimate is then the main ingredient we need in order to prove Three Balls Inequalities. Indeed, using a cut-off argument and a Caccioppoli type estimate, we are able to derive Three Balls Inequality for the operator $H_h$ on 1-point periodic graph. In a last step, we are
able to extend this result to a larger family of periodic graphs, which contains the Hexagonal lattice. This is possible since the graphs $\Gamma$ in this family are such that we can construct
another graph $\Gamma'$ from a subset of its vertices which is a 1-point periodic graph and for which the restriction of the initial function $f$ to $\Gamma'$ satisfies the Three Balls Inequalities.
Moreover, the $\ell^2$ norm of $f$ on $\Gamma$ is equivalent to its $\ell^2$ norm on $\Gamma'$.

Note that, while this allows to significantly enlarge the class of periodic graphs for which Three Balls Inequalities are now available, 
it is still open whether such an inequality is valid
on all periodic graphs, most notably,
we would like to establish a unique continuation
property for the Laplacian on the Kagome Lattice,
{\it see} Figure \ref{kag} and \cite[Figure 4]{ando_spectral_2015} for further precision.

\smallskip

The paper is organized as follows: In Section 2, we will introduce the setting of our study. Section 3 is devoted to our discrete Carleman estimate (Theorem \ref{Carleman:1}). In Section 4, we give the statement of our Three Balls Inequality (Theorem \ref{TBT:4.2}) for the discrete magnetic Schrödinger operator on 1-point periodic graph together with its proof using the discrete Carleman estimate. Finally in the last Section, we extend the Three Balls Inequality to a larger class of periodic graphs, starting with the Hexagonal lattice.

\section{Setting and notation}
Throughout the paper, $h>0$ will be a parameter called {\em the mesh size} and that will be required to be sufficiently small.
\begin{definition}\label{definition:1}
    We say that $h\Gamma=\{h\mathcal{L},h\mathcal{V},h\mathcal{E}\}$ is a {\em periodic graph} if it satisfies the following properties.
\begin{enumerate}
    \item There is a basis $\ \mathbf{v}_1,\dots,\mathbf{v}_d$ of $\R^d$ such that
    $$
    \mathcal{L}=\left\{\mathbf{v}(n)=\sum_{j=1}^d\mathbf{v}_jn_j \ | \ n\in\Z^d\right\}
    $$
    the lattice generated by those vectors.
    \item There are $p_1,\dots,p_s$, $s$ points in $\R^d$, such that for every $i,j\in\{1,\dots,s\}$ with $i\not=j$, $p_i-p_j\notin\mathcal{L}$. The vertex set is then defined as 
    \begin{equation*}
        h\mathcal{V}=\bigcup_{i=1}^s h\mathcal{V}_i \mbox{ where } h\mathcal{V}_i=hp_i+h\mathcal{L}.
    \end{equation*}
    Without loss of generality, we will always assume that $p_1=0$.
    \item The edge set $h\mathcal{E}\subset h\mathcal{V}\times h\mathcal{V}$ satisfies the following periodicity condition: let $x,y\in h\mathcal{V}$ with $x\sim y$ {\it i.e.} $(x,y)$ is an edge, $(x,y)\in h\mathcal{E}$. Then, for every $m\in\Z^d$, $x+h\mathbf{v}(m)\sim y+h\mathbf{v}(m)$
    {\it i.e.} $\left(x+h\mathbf{v}(m),y+h\mathbf{v}(m)\right)$ is also an edge.
    \item The graph $h\Gamma$ is connected.
\end{enumerate} 
If convenient, we will say that $h\Gamma$ is
a {\em $d$-dimensional $s$-point} periodic graph.
\end{definition}

We will particularly focus on {\em 1-point periodic graphs} in which case $h\mathcal{V}=h\mathcal{L}$,
in particular, our Carleman estimate will only be valid in this case. 

\begin{center}
\begin{figure}[ht]
	\setlength{\unitlength}{0.7cm}
\begin{tabular}{ccc}
\begin{picture}(4.45,4.45)
\linethickness{0.05mm}
\multiput(0.45,0.45)(1,0){5}{\line(0,1){4}} 
\multiput(0.45,0.45)(0,1){5}{\line(1,0){4}} 
\put(0.15,0.15){$p_1$}
\put(0.45,0.45){\vector(1,0){1}}
\put(0.45,0.45){\vector(0,1){1}}
\put(0,0.8){$\mathbf{v}_2$}
\put(0.75,0.1){$\mathbf{v}_1$}
\multiput(0.45,0.45)(0,1){5}{\circle*{0.1}}
\multiput(1.45,0.45)(0,1){5}{\circle*{0.1}}
\multiput(2.45,0.45)(0,1){5}{\circle*{0.1}}
\multiput(3.45,0.45)(0,1){5}{\circle*{0.1}}
\multiput(4.45,0.45)(0,1){5}{\circle*{0.1}}
\end{picture}
&
\begin{picture}(4.45,4.45)
\linethickness{0.05mm}
\multiput(0.45,0.45)(1,0){5}{\line(0,1){4}} 
\multiput(0.45,0.45)(0,1){5}{\line(1,0){4}} 
\multiput(0.45,0.45)(0,1){5}{\circle*{0.1}}
\multiput(1.45,0.45)(0,1){5}{\circle*{0.1}}
\multiput(2.45,0.45)(0,1){5}{\circle*{0.1}}
\multiput(3.45,0.45)(0,1){5}{\circle*{0.1}}
\multiput(4.45,0.45)(0,1){5}{\circle*{0.1}}
\multiput(1.45,0.45)(1,0){4}{\line(-1,1){1}} 
\multiput(1.45,1.45)(1,0){4}{\line(-1,1){1}} 
\multiput(1.45,2.45)(1,0){4}{\line(-1,1){1}} 
\multiput(1.45,3.45)(1,0){4}{\line(-1,1){1}} 
\linethickness{0.5mm}
\put(0.45,0.45){\color{blue}\line(0,1){1}}
\put(0.45,0.45){\color{blue}\line(1,0){1}}
\put(0.45,1.45){\color{blue}\line(1,-1){1}}
\end{picture}
&
\begin{picture}(4.45,4.45)
\linethickness{0.05mm}
\multiput(0.45,0.45)(1,0){5}{\line(0,1){4}} 
\multiput(0.45,0.45)(0,1){5}{\line(1,0){4}} 
\multiput(0.45,0.45)(0,1){5}{\circle*{0.1}}
\multiput(1.45,0.45)(0,1){5}{\circle*{0.1}}
\multiput(2.45,0.45)(0,1){5}{\circle*{0.1}}
\multiput(3.45,0.45)(0,1){5}{\circle*{0.1}}
\multiput(4.45,0.45)(0,1){5}{\circle*{0.1}}
\multiput(0.45,0.45)(1,0){3}{\line(2,1){2}} 
\multiput(0.45,1.45)(1,0){3}{\line(2,1){2}} 
\multiput(0.45,2.45)(1,0){3}{\line(2,1){2}} 
\multiput(0.45,3.45)(1,0){3}{\line(2,1){2}} 
\multiput(0.45,0.95)(0,1){4}{\line(2,1){1}} 
\multiput(3.45,0.45)(0,1){4}{\line(2,1){1}} 
\multiput(0.45,0.45)(1,0){4}{\line(1,2){1}} 
\multiput(0.45,1.45)(1,0){4}{\line(1,2){1}} 
\multiput(0.45,2.45)(1,0){4}{\line(1,2){1}} 
\multiput(0.45,3.45)(1,0){4}{\line(1,2){0.5}} 
\multiput(0.95,0.45)(1,0){4}{\line(1,2){0.5}} 
\linethickness{0.5mm}
\put(0.45,0.45){\color{blue}\line(0,1){1}}
\put(0.45,0.45){\color{blue}\line(1,0){1}}
\put(0.45,0.45){\color{blue}\line(2,1){2}}
\put(0.45,0.45){\color{blue}\line(1,2){1}}
\end{picture}\\
a) Square lattice&b) triangular graph&c) another 1-point\\ 
&&periodic graph 
\end{tabular}	
\caption{Three 2-dimensional one point periodic graphs. Dots are the vertices and lines joining them the edges. The thick (blue) lines are the edges that are reproduced periodically}
\end{figure}
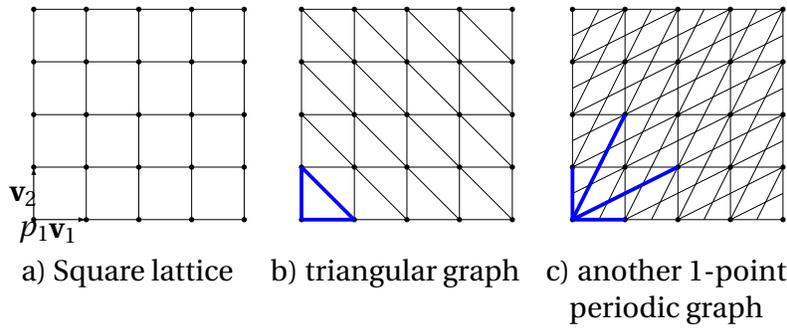
\end{center}

The Three Balls Inequality will be extended to more general $s$-point periodic graphs, including the Hexagonal lattice but {\em not} the Kagome lattice.

\begin{center}
    \tikzset{
    my hex/.style={regular polygon, regular polygon sides=6, draw, inner sep=0pt, outer sep=0pt, minimum size=1cm},
    my circ/.style={draw, circle, color=blue, fill=blue, inner sep=0pt, minimum size=0.75mm},
    my tri/.style={regular polygon, regular polygon sides=3, draw, inner sep=0pt, outer sep=0pt, minimum size=1mm, color=red, fill=red},
    my squa/.style={regular polygon, regular polygon sides=4, draw, color=green, fill=green, inner sep=0pt, minimum size=0.75mm},
    my tri2/.style={regular polygon, regular polygon sides=6, draw, inner sep=0pt, outer sep=0pt, minimum size=1mm, color=orange, fill=orange},
    }
\begin{figure}
\begin{tabular}{cc}
\begin{tikzpicture}[scale=0.35]
    \foreach \i in {0,...,3} 
    \foreach \j in {0,...,2} {
    \draw [line width=.5pt] (\i*2.414,\j*2.414) -- (\i*2.414+1,\j*2.414);
    \draw [line width=.5pt] (\i*2.414+1,\j*2.414) -- (\i*2.414+1.71,\j*2.414+0.71);
    \draw [line width=.5pt] (\i*2.414+1.71,\j*2.414+0.71) -- (\i*2.414+1.71,\j*2.414+1.71);
    \draw [line width=.5pt] (\i*2.414+1.71,\j*2.414+1.71) -- (\i*2.414+1,\j*2.414+2.414);
    \draw [line width=.5pt] (\i*2.414+1,\j*2.414+2.414) -- (\i*2.414,\j*2.414+2.414);
    \draw [line width=.5pt] (\i*2.414,\j*2.414+2.414) -- (\i*2.414-0.71,\j*2.414+1.71);
    \draw [line width=.5pt] (\i*2.414-0.71,\j*2.414+1.71) -- (\i*2.414-0.71,\j*2.414+0.71);
    \draw [line width=.5pt] (\i*2.414-0.71,\j*2.414+0.71) -- (\i*2.414,\j*2.414);
    }
    \foreach \i in {0,...,3} 
    \foreach \j in {0,...,3} {
    \node(h)[my circ] at (\i*2.414,\j*2.414) {};
    \node(h)[my squa] at (\i*2.414+1,\j*2.414) {};
    }
    \foreach \i in {0,...,4} 
    \foreach \j in {0,...,2} {
    \node(h)[my tri] at (\i*2.414-0.71,\j*2.414+0.71) {};
    \node(h)[my tri2] at (\i*2.414-0.71,\j*2.414+1.71) {};
    }
\end{tikzpicture}&
\begin{tikzpicture}
  \foreach \n[count=\k] in {4,5,4}{
    \foreach \m in {1,...,\n}{
      \node(h)[my hex] at (-\n/2+\m,{\k*sqrt(3)/2}){};
      \foreach \t in {1,4} \node[my circ] at ($(h)+({(\t-1)*60}:.5)$){};
      \foreach \t in {2,5} \node[my tri] at ($(h)+({(\t-1)*60}:.5)$){};
      \foreach \t in {3,6} \node[my squa] at ($(h)+({(\t-1)*60}:.5)$){};
    }
  }
\end{tikzpicture}\\
The octogonal lattice&The Kagome lattice
\end{tabular}
\caption{\label{kag} The octogonal lattice (4 points) and the Kagome lattice (3 points)
are periodic graphs.}

\end{figure}
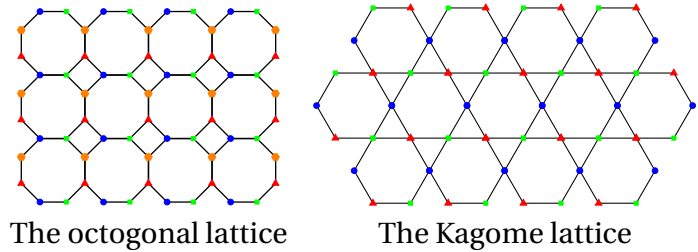
\end{center}




Note that the periodicity assumption implies that a 1-point periodic graph is homogeneous and moreover that all vertices have even degree $2k$ for some $k\geq d$ since the graph is connected.
    Therefore $h\Gamma$ is a $2k$-degree graph and we can write
\begin{equation*}
    h\mathcal{E}=\left\{\left(x,x\pm h\mathbf{e}_j\right) \ | \ x\in h\mathcal{V},\ j=1,\ldots,k\right\},
\end{equation*}
where the vectors $e_j$ are of the form
$e_j=\mathbf{v}(n_j)$ for some $n_j\in\Z^d$. Note that the system
$(e_1,\ldots,e_k)$ is spanning $\R^d$, otherwise the graph $h\Gamma$ would not be 
connected.

Furthermore, writing the vectors $\mathbf{e}_j$
as columns, we define the
$k\times d$ matrix
\begin{equation*}
    E=\begin{pmatrix}
    e_1 & e_2 & \dots & e_k
    \end{pmatrix}. 
\end{equation*}

\begin{lemma}
The Gramm matrix of the system $(e_1,\ldots,e_k)$, $G=E^*E$ is positive definite, thus 
\begin{equation*}
    |x|_\Gamma:=\sqrt{x^*\left(E^*E\right)^{-1}x}
\end{equation*}
defines a norm on $\R^d$.
\end{lemma}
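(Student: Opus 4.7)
The plan is to verify the positive-definiteness of $G = E^*E$ directly from the spanning assumption on $(e_1,\ldots,e_k)$, and then to invoke the standard fact that a symmetric positive definite matrix induces an inner product on $\R^d$ via the quadratic form $x\mapsto x^*G^{-1}x$.

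First I would observe that $G$ is symmetric (Hermitian) by construction since $(E^*E)^*=E^*E$, so its spectrum is real. To check positive semi-definiteness, I compute, for any $x\in\R^d$,
\[
x^*Gx \;=\; x^*E^*Ex \;=\; |Ex|^2 \;=\; \sum_{j=1}^k \bigl|\langle e_j,x\rangle\bigr|^2 \;\geq\; 0.
\]
This sum vanishes if and only if $\langle e_j,x\rangle=0$ for every $j=1,\ldots,k$. This is precisely where the geometry of the graph enters: the excerpt has just noted that the family $(e_1,\ldots,e_k)$ spans $\R^d$, as a direct consequence of the connectedness of $h\Gamma$ (otherwise the $e_j$ would all lie in a proper subspace and the graph would split into disconnected lattice shifts). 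Consequently, the only vector orthogonal to every $e_j$ is $0$, and $G$ is strictly positive definite.

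Since $G$ is symmetric positive definite, it is invertible and $G^{-1}$ is also symmetric positive definite, as one sees by diagonalizing $G$ in an orthonormal basis and inverting its (strictly positive) eigenvalues. The bilinear form $(x,y)\mapsto x^*G^{-1}y$ is therefore an inner product on $\R^d$, so its associated quadratic form yields a genuine norm $|x|_\Gamma=\sqrt{x^*G^{-1}x}$ satisfying absolute homogeneity, positivity, and the triangle inequality (via Cauchy--Schwarz applied to this inner product).

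I do not anticipate a real obstacle here: this is a routine linear algebra verification. The only substantive point is the equivalence between the strict positivity of $\sum_j|\langle e_j,x\rangle|^2$ and the spanning property of $(e_1,\ldots,e_k)$, which is in turn guaranteed by the connectedness hypothesis on $h\Gamma$ built into Definition~\ref{definition:1}.
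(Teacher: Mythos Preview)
Your proof is correct and follows essentially the same route as the paper's: both compute $x^*Gx=|Ex|^2$, observe that this vanishes precisely when $x$ is annihilated by every $e_j$, and then invoke the spanning property of $(e_1,\ldots,e_k)$ (equivalently, $\operatorname{rank}(E)=d$) to conclude $x=0$. The paper phrases the last step via $\ker(G)=\ker(E)$ and the rank--nullity theorem, whereas you phrase it via orthogonality to a spanning set; these are the same argument in different words.
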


\begin{proof} This is a classical fact about Gramm matrices, let us prove it for the sake of 
completeness. First $G$ is symmetric. Further $\ker (E)\subset\ker (G)$ while 
\begin{equation*}
    Gx=0 \implies x^*E^*Ex=0 \implies (Ex)^*Ex=0 \implies Ex=0,
\end{equation*}
so that $\ker(G)=\ker(E)$. By the rank theorem we get
\begin{equation*}
    d=\mbox{rank}(E)+\dim\ker(E)=d+\dim\ker(E)
\end{equation*}
since the columns of $E$ span $\R^d$. It follows that  $\ker(G)=\ker(E)=\{0\}$
and $G$ is positive definite thus invertible and its inverse is positive definite as well.
\end{proof}

The associated balls will be denoted as
\begin{equation}\label{ball:1}
    B_r = \left\{ x \in \R^d \,: \  |x|_{\Gamma}\leq r \right\}.
\end{equation}

Now let $f:h\mathcal{V}\mapsto\C$ be a function defined on a graph $h\Gamma$. Let $\ell^2(h\mathcal{V})$ be the set of functions $f:h\mathcal{V}\mapsto\C$ that satisfy
\begin{equation*}
    \norm{f}_{\ell^2(h\mathcal{V})}^2:=\sum_{v\in h\mathcal{V}}\abs{f(v)}^2<\infty,
\end{equation*}
and for $f,g\in \ell^2(h\mathcal{V})$ we define the inner product,
\begin{equation*}
    \scal{f,g}_{\ell^2(h\mathcal{V})}:=\sum_{v\in h\mathcal{V}}f(v)\overline{g(v)}.
\end{equation*}
We may simply write $\norm{f}=\norm{f}_{\ell^2(h\mathcal{V})}$ and 
$\scal{f,g}=\scal{f,g}_{\ell^2(h\mathcal{V})}$ when the graph and the mesh size $h$ are fixed.

It remains to define several operators on $h\Gamma$, the first being the graph Laplacian. 
\begin{definition}
Let $f$ be a function on a graph $h\Gamma$, then
\begin{equation*}
    \Delta_{h\Gamma} f(x)=\sum_{x\sim y}\left(f(y)-f(x)\right), \quad x\in h \mathcal{V}.
\end{equation*}
We say that $f$ is a discrete harmonic function (or simply is harmonic) if 
$$
\Delta_{h\Gamma}f(x)=0
$$ for all $x\in h\mathcal{V}$.
\end{definition}

Assume $h\Gamma$ is a 1-point periodic graph, then we define
\begin{equation*}
    D_j^\pm f(x):= \pm\left(f(x\pm he_j)-f(x)\right)\quad\mbox{and}\quad D^s_j:=f(x+ he_j)-f(x- he_j),
\end{equation*}
and $\Delta_j=D^+_jD^-_j=D^-_jD^+_j$.
We see that the Laplacian of a 1-point periodic graph can be written as
\begin{eqnarray*}
    \Delta_{h\Gamma}f(x)= \sum_{j=1}^k D^+_jD_j^- f(x)&=&\sum_{j=1}^k\left(f(x+ he_j)+f(x- he_j)-2f(x)\right) \\
    &=:& \sum_{j=1}^k \Delta_j f(x).
\end{eqnarray*}
Finally, we define
$$
D_s u(x)=\left(\sum_{j=1}^k |D_j^s u(x)|^2\right)^{\frac{1}{2}}
\quad\mbox{and}\quad 
D_s^2u=\left(\sum_{j=1}^k |(D_j^s)^2 u(x)|^2\right)^{\frac{1}{2}}.
$$

Further, consider the following vectors and matrices,
\begin{equation*}
    x_E=Ex, \quad \nabla^2_E f=E\left(\nabla^2f\right)E^*,
\end{equation*}
and we denote $\abs{x}^2_E=x^*E^*Ex$.
Finally, let $\mathbb{T}_h^d=\R^d/(h^{-1}\Z)^d$ and define the operator $\mathcal{U}:\ell^2(h\mathcal{V})\mapsto \ell^2(\mathbb{T}_h^d)$,
\begin{equation*}
 \mathcal{U}f(\xi) := \hat{f}(\xi)=\sum_{v\in h\mathcal{V}}f(v)e^{2i\pi \scal{v,\xi}}.
\end{equation*}
It is worth noting that it satisfies a Parseval type equality, 
\begin{equation*}
    \norm{\hat{f}}_{\ell^2(\mathbb{T}_h^d)}^2:=h^d\int_{[0,h^{-1}]^d}|\hat{f}(\xi)|^2\,\mbox{d}\xi=\norm{f}_{\ell^2(h\mathcal{V})}^2.
\end{equation*}
Further
$$
\mathcal{U}\Delta_jf(\xi)=2\bigl(\cos \left(2\pi h\scal{e_j,\xi}\right)-1\bigr)\hat{f}(\xi)
=-4\sin^2\left(\pi h\scal{e_j,\xi}\right)\hat{f}(\xi)
$$
and
$$
\mathcal{U}D_j^sf(\xi)=-2i\sin 2\pi h\scal{e_j,\xi}.
$$
\section{A Carleman Estimate}

Our first goal in this paper is to prove the following Carleman estimate for 1-point periodic graphs. 
\begin{theorem}[Carleman Estimate]\label{Carleman:1}
Let $h\Gamma$ be a 1-point periodic graph.
    Let $u:(h\Gamma)\to \R$ with $\text{supp}(u) \subset B_2 \setminus B_{1/2}$. Let $g$ be defined by $g=h^{-2}\Delta_{h\Gamma} u$ in $B_4$ and $g=0$ on $h\mathcal{V}\setminus B_4$.
    Let
    \begin{equation*}
        \varphi(t)=-\log t + c \left( \log t \arctan(\log t) -\frac{1}{2} \log (1+ \log^2 t )\right)
    \end{equation*}
    for a certain small constant $c>0$ and $\phi_\tau(x):=\tau\varphi(|x|_\Gamma))$
    
    Then, there exist $h_0,\delta_0 \in (0,1)$ with $h_0 < \delta_0$, $C>1$ and $1<\tau_0<\delta_0h_0^{-1}$ such that for all $h\in (0,h_0)$ and $\tau \in (\tau_0, \delta_0h^{-1})$ we have
    \begin{multline}\label{carleman:eq1}
        \tau^3 \norm{e^{ \phi_\tau} u}^2_{\ell^2(h\mathcal{V})} + \tau \norm{e^{\phi_\tau} h^{-1}D_s u}^2_{\ell^2(h\mathcal{V})} + \tau^{-1} \norm{e^{ \phi_\tau} h^{-2}D^2_s u}^2_{\ell^2(h\mathcal{V})} \\
        \leq C \norm{e^{\phi_\tau} g}^2_{\ell^2(h\mathcal{V})}.
    \end{multline}
\end{theorem}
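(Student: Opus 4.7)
The plan is to follow the conjugation-plus-commutator strategy of \cite{fernandez-bertolin_discrete_2021}, adapting every computation to the graph norm $|\cdot|_\Gamma$ and to the spanning family $(e_1,\ldots,e_k)$ of the 1-point periodic graph. Set $v := e^{\phi_\tau} u$ and introduce the conjugated operator
\[
L_\tau v(x) := h^{-2} e^{\phi_\tau(x)} \Delta_{h\Gamma}\bigl(e^{-\phi_\tau} v\bigr)(x) = h^{-2}\sum_{j=1}^k\bigl(e^{\phi_\tau(x)-\phi_\tau(x+he_j)} v(x+he_j) + e^{\phi_\tau(x)-\phi_\tau(x-he_j)} v(x-he_j) - 2v(x)\bigr).
\]
Writing $b_j(x):=\phi_\tau(x+he_j)-\phi_\tau(x)$ and grouping each edge pair, $L_\tau$ splits into a self-adjoint part $S_\tau$ with coefficients involving $\cosh(b_j)$ and an antisymmetric part $A_\tau$ with coefficients involving $\sinh(b_j)$; to leading order $b_j\simeq h\partial_{e_j}\phi_\tau$, so that $S_\tau$ behaves like $h^{-2}\Delta_{h\Gamma}+|\nabla_E\phi_\tau|^2$ and $A_\tau$ like a first-order difference operator with coefficients $h^{-1}\partial_{e_j}\phi_\tau$, matching the continuous analogue.

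The backbone of the argument is the identity
\[
\|L_\tau v\|^2 = \|S_\tau v\|^2 + \|A_\tau v\|^2 + \langle [S_\tau,A_\tau] v,v\rangle,
\]
and the objective is to prove $\langle [S_\tau,A_\tau]v,v\rangle \gtrsim \tau^3\|v\|^2 + \tau\|h^{-1}D_s v\|^2$, while $\|S_\tau v\|^2$ supplies the second-order piece $\tau^{-1}\|h^{-2}D_s^2 v\|^2$. The commutator is computed by combining summation by parts on $h\mathcal{V}$ with a Fourier reduction via $\mathcal{U}$, where $D_j^s$ becomes multiplication by $-2i\sin(2\pi h\langle e_j,\xi\rangle)$ and $\Delta_j$ by $-4\sin^2(\pi h\langle e_j,\xi\rangle)$. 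After this reduction the estimate becomes a pointwise quadratic-form inequality in which the Hessian $\partial^2_{e_j,e_k}\phi_\tau$ appears weighted by the non-negative edge symbols $\sin^2(\pi h\langle e_j,\xi\rangle)/h^2$. The specific choice of $\varphi$ enters here in its standard Koch--Tataru role: the correction $c\bigl(\log t\,\arctan(\log t)-\tfrac12 \log(1+\log^2 t)\bigr)$ is tuned so that $\varphi''(t)\gtrsim c/\bigl(t^2(1+\log^2 t)\bigr)$ while $\varphi'(t)\sim -1/t$, which on the annulus $B_2\setminus B_{1/2}$ yields a strictly positive quadratic form whose magnitude is a definite fraction of $\tau$.

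The hard part will be the lower bound of the resulting quadratic form, which is exactly where the 1-point hypothesis is used. Because the graph has a single vertex per cell, the Fourier symbol of $\Delta_{h\Gamma}$ is the \emph{scalar} $-4h^{-2}\sum_j \sin^2(\pi h\langle e_j,\xi\rangle)$, and the spanning property of $(e_1,\ldots,e_k)$ (equivalently the positive definiteness of $G=E^*E$, which underlies the norm $|\cdot|_\Gamma$) forces this symbol to dominate $|\xi|_E^2$ in the low-frequency regime $h|\xi|\lesssim 1$ and to stay uniformly away from zero elsewhere on the dual torus $\mathbb{T}_h^d$. This scalar positivity is exactly what makes the commutator analysis close; for a general $s$-point graph the symbol is matrix-valued and its smallest eigenvalue can vanish on non-trivial subsets of $\mathbb{T}_h^d$, which is precisely why Section 5 will instead extract a 1-point auxiliary graph rather than proving a direct Carleman estimate. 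Once this pointwise inequality is secured, the Taylor remainders produced by replacing $b_j$ by $h\partial_{e_j}\phi_\tau+\tfrac{h^2}{2}\partial^2_{e_j,e_j}\phi_\tau$ are of size $h\tau$ times gradient and Hessian terms; the assumption $\tau\leq \delta_0 h^{-1}$ gives $h\tau\leq \delta_0$, so choosing $\delta_0$ (and correspondingly $h_0$) small enough absorbs these errors into the left-hand side. Unconjugating through $u=e^{-\phi_\tau} v$ and using $g=h^{-2}\Delta_{h\Gamma} u$ on $\mathrm{supp}(u)\subset B_2\setminus B_{1/2}$ then yields \eqref{carleman:eq1}.
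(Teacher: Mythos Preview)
Your outline captures the right skeleton --- conjugate, split $L_\tau=S_\tau+A_\tau$, expand $\|L_\tau v\|^2$, and extract positivity from the commutator --- and your identification of where the 1-point hypothesis enters (scalar versus matrix-valued symbol) is exactly right. But there is a genuine gap at the step where you pass to Fourier analysis.

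The operators $S_\tau$ and $A_\tau$ have \emph{variable} coefficients: they depend on $x$ through $\cosh(D_j^\pm\phi_\tau(x))$ and $\sinh(D_j^\pm\phi_\tau(x))$. The Fourier transform $\mathcal{U}$ on $h\mathcal{V}$ only diagonalizes \emph{constant}-coefficient operators, so ``a Fourier reduction via $\mathcal{U}$'' cannot be applied to $[S_\tau,A_\tau]$ directly, and there is no meaningful way to get a ``pointwise quadratic-form inequality'' simultaneously in $x$ and $\xi$ without an intermediate mechanism. The paper's proof supplies exactly this mechanism: it first localizes $v$ to balls of radius $\lambda\tau^{-1/2}$ via a partition of unity (Lemma~\ref{lemma:3.2}), then replaces the hyperbolic functions by their Taylor approximations (Lemma~\ref{lemma:3.3}), and then \emph{freezes} the coefficients at a fixed point $\bar x$ in each ball (Lemma~\ref{lemma3:4}). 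Only after freezing do the operators $\overline S_{\phi_\tau},\overline A_{\phi_\tau}$ become constant-coefficient, and only then is Parseval applicable (Proposition~\ref{proposition:3.5}). The localization scale $\tau^{-1/2}$ is not cosmetic: it is chosen so that freezing errors are $O(\tau^{1/2}\lambda)$ relative to the main terms and can be absorbed.

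A second, related issue is your description of how positivity is obtained. It is not simply that ``$\varphi''\gtrsim c/(t^2(1+\log^2 t))$ yields a strictly positive quadratic form''. The paper's symbol analysis splits the dual torus into three regions: high frequency $|\xi|_E\gtrsim\tau$, where $p_s^2$ alone already dominates; low frequency away from the joint characteristic set $\mathcal C_\tau=\{|\xi|_E^2=|\nabla_E\phi_\tau|^2\}\cap\{\langle\xi_E,\nabla_E\phi_\tau\rangle=0\}$, where $p_s^2+p_a^2$ dominates; and a neighborhood of $\mathcal C_\tau$, where $p_s$ and $p_a$ both nearly vanish and the commutator symbol $q(\xi)$ must supply the bound. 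It is only in this last region that the specific convexity of $\varphi$ matters, and the relevant combination is $\varphi''+\varphi'/t$ (Lemma~\ref{lemma:3.1}), not $\varphi''$ alone. Your proposal collapses these cases and does not explain what happens on the characteristic set, which is precisely the place where the estimate is delicate.
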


To prove the Three Balls Inequality in the next section, we will further lower bound the 
left hand side and only use the less precise version
    \begin{equation}\label{carleman:eq1bis}
\tau^3\norm{e^{ \phi_\tau} u}^2_{\ell^2(h\mathcal{V})}   
+ \tau \norm{e^{\phi_\tau} h^{-1}D_s u}^2_{\ell^2(h\mathcal{V})} 
        \leq C \norm{e^{\phi_\tau} g}^2_{\ell^2(h\mathcal{V})}.
    \end{equation}

The remaining of this section is devoted to the proof of this theorem. Before we start proving the Carleman Estimate, we remark that we can rescale the graph $\Gamma$ by a constant. In particular, it will be useful to scale the graph $\Gamma$ in such a way that 
\begin{equation}\label{scaling:1}
    \norm{e_j}_2<\frac{1}{4\sqrt{d}},
\end{equation}
for all $j=1,\dots,k$. We are now ready to prove Theorem \ref{Carleman:1}.
First we will introduce a lemma on the Carleman weight $\phi$, and then introduce a new operator $L_\phi$. 

\begin{lemma}\label{lemma:3.1}
Let $0<c<\dfrac{2}{\pi}$,
 \begin{equation*}
        \varphi(t)= -\log(t)+c\left(\left(\log t\right)\arctan\left(\log t\right)- \frac{1}{2}\log\left(1+\left(\log t\right)^2\right)\right),
    \end{equation*}
and $\phi_\tau(x):=\tau\varphi\left(|x|_\Gamma\right)$. Further, let $\gamma_0>0$ and define    
    $$
    \mathcal{C}_\tau=\left\{\abs{\xi_E}^2=\abs{\nabla_E\phi_\tau(x)}^2\right\}\cap\left\{\scal{\xi_E,\nabla_E\phi_\tau(x)}_{\R^k}=0\right\},
    $$
as well as
    \begin{equation*}
    \mathcal{N}_{\tau,\mathcal{C}}:=\left\{\xi :\mbox{dist}(\xi_E,\mathcal{C}_\tau)\leq\gamma_0\tau\right\}.
\end{equation*}
Then, taking $\gamma_0$ small enough, there exist a constant $C>0$ such that if $x\in B_4\setminus B_1$ and $\xi\in \mathcal{N}_{\tau,\mathcal{C}}$, we have
        \begin{equation}\label{lemma eq:3.1}
        \nabla_E^*\phi_\tau(x)\nabla^2_E \phi_\tau(x)\nabla_E\phi_\tau(x)+ \xi_E^*\nabla^2_E\phi_\tau(x)\xi_E \geq C\tau^3.
    \end{equation} 
\end{lemma}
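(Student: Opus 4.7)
The plan is to reduce the problem to an explicit computation on $\mathcal{C}_\tau$, where the two bilinear expressions combine into the pseudoconvexity quantity $\tau^{3}(\varphi'(r))^{2}\bigl(\varphi''(r)+\varphi'(r)/r\bigr)$, and then to extend the estimate to the full neighborhood $\mathcal{N}_{\tau,\mathcal{C}}$ by a soft perturbation argument in $\xi_E$.

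A direct differentiation of $\varphi$ gives $\varphi'(t)=(c\arctan(\log t)-1)/t$ together with
\[
\varphi''(t)+\frac{\varphi'(t)}{t}=\frac{c}{t^{2}(1+\log^{2}t)}>0.
\]
Since $c<2/\pi$ and $\arctan(\log r)\in[0,\pi/2)$ for $r:=|x|_\Gamma\in[1,4]$, both $(\varphi'(r))^{2}$ and $\varphi''(r)+\varphi'(r)/r$ are bounded below by positive constants depending only on $c$ on this interval.

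Setting $p:=\nabla r=G^{-1}x/r$, the standard radial formula $\nabla^{2}r=(G^{-1}-pp^{*})/r$ yields
\[
\nabla^{2}\phi_\tau=\tau\!\left(\varphi''(r)-\frac{\varphi'(r)}{r}\right)\!pp^{*}+\frac{\tau\varphi'(r)}{r}G^{-1}.
\]
Let $q:=Ep$. The identity $G=E^{*}E$ gives $|q|^{2}=p^{*}Gp=1$ and $\nabla_E\phi_\tau=\tau\varphi'(r)q$; moreover a direct check shows $EG^{-1}E^{*}=\Pi_E$, the orthogonal projection of $\R^{k}$ onto $\mathrm{range}(E)$, so $\Pi_E q=q$, and since every $\xi_E=E\xi$ automatically lies in $\mathrm{range}(E)$, also $\Pi_E\xi_E=\xi_E$. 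Substituting into $\nabla_E^{2}\phi_\tau=\tau(\varphi''-\varphi'/r)\,qq^{*}+\tau(\varphi'/r)\Pi_E$,
\[
\nabla_E^{*}\phi_\tau\,\nabla_E^{2}\phi_\tau\,\nabla_E\phi_\tau=\tau^{3}(\varphi'(r))^{2}\varphi''(r),
\]
\[
\xi_E^{*}\nabla_E^{2}\phi_\tau\,\xi_E=\tau\!\left(\varphi''(r)-\frac{\varphi'(r)}{r}\right)\!(q^{*}\xi_E)^{2}+\frac{\tau\varphi'(r)}{r}|\xi_E|^{2}.
\]
On $\mathcal{C}_\tau$ the defining conditions force $q^{*}\xi_E=0$ and $|\xi_E|^{2}=\tau^{2}(\varphi'(r))^{2}$, so summing yields the clean identity
\[
\tau^{3}(\varphi'(r))^{2}\!\left(\varphi''(r)+\frac{\varphi'(r)}{r}\right)=\frac{c\tau^{3}\bigl(1-c\arctan(\log r)\bigr)^{2}}{r^{4}(1+\log^{2}r)}\geq 2C\tau^{3},
\]
with $C>0$ depending only on $c$, uniformly on $x\in B_{4}\setminus B_{1}$.

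Finally, the extension from $\mathcal{C}_\tau$ to $\mathcal{N}_{\tau,\mathcal{C}}$ is a one-line continuity argument. The first summand in \eqref{lemma eq:3.1} is independent of $\xi$, and the second is a quadratic form in $\xi_E$ whose Hessian has operator norm $O(\tau)$. Writing $\xi_E=\xi_E^{(0)}+\eta$ with $\xi_E^{(0)}\in\mathcal{C}_\tau$ and $|\eta|\leq\gamma_0\tau$, and using $|\xi_E^{(0)}|=\tau|\varphi'(r)|=O(\tau)$, one obtains
\[
\Bigl|\xi_E^{*}\nabla_E^{2}\phi_\tau\,\xi_E-(\xi_E^{(0)})^{*}\nabla_E^{2}\phi_\tau\,\xi_E^{(0)}\Bigr|\leq C'\gamma_0\tau^{3},
\]
for some $C'$ depending only on the graph and on the range of $r$. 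Choosing $\gamma_0$ small enough that $C'\gamma_0<C$ gives \eqref{lemma eq:3.1}. The only nontrivial step is the observation that $EG^{-1}E^{*}$ is the orthogonal projection onto $\mathrm{range}(E)$ and that both $q$ and $\xi_E$ lie there; once this is used, the two terms on $\mathcal{C}_\tau$ collapse onto the pseudoconvexity quantity $\varphi''(r)+\varphi'(r)/r$, and both the sign control and the perturbation argument become immediate.
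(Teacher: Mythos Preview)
Your proof is correct and follows essentially the same approach as the paper: compute the quadratic form exactly on $\mathcal{C}_\tau$ to obtain the pseudoconvexity quantity $\tau^{3}(\varphi')^{2}(\varphi''+\varphi'/r)$, evaluate it using the explicit derivatives of $\varphi$, and then extend to $\mathcal{N}_{\tau,\mathcal{C}}$ by a continuity/perturbation argument. Your use of the identity $EG^{-1}E^{*}=\Pi_E$ and the unit vector $q=Ep$ streamlines the bookkeeping, and your perturbation step is made more quantitative than the paper's soft ``by continuity'' argument, but the underlying strategy is the same.
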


\begin{proof}
As \eqref{lemma eq:3.1} is homogeneous of degree $3$ with respect to $\tau$, it is enough to prove it for $\tau=1$.
We will simplify the notation by writing $\phi=\phi_1$. First we assume that $\xi\in\mathcal{C}_\tau$.

Define $A:=\left(E^TE\right)^{-1}$, we have
$$
\nabla\phi(x)=\dfrac{\varphi'\left(|x|_\Gamma\right)}{|x|_\Gamma}Ax
$$
thus, after multiplying by $E$ we get
\begin{equation}
\label{eq:phi*}
    \nabla_E\phi(x)=\frac{\varphi'\left(|x|_\Gamma\right)}{|x|_\Gamma}EAx.
\end{equation}
Further
$$
\nabla^2\phi(x)=\left(\varphi''\left(|x|_\Gamma\right)-\dfrac{\varphi'\left(|x|_\Gamma\right)}{|x|_\Gamma}\right)\left(\frac{Ax}{|x|_\Gamma}\otimes \frac{Ax}{|x|_\Gamma}\right)+\dfrac{\varphi'\left(|x|_\Gamma\right)}{|x|_\Gamma}A.
$$
After multiplying on the left by $E$ and on the right by $E^*$, we get
\begin{equation*}
    \nabla^2_E\phi(x)=\left(\varphi''\left(|x|_\Gamma\right)-\frac{\varphi'\left(|x|_\Gamma\right)}{|x|_\Gamma}\right)E\left(\frac{Ax}{|x|_\Gamma}\otimes\frac{Ax}{|x|_\Gamma}\right)E^*
    +\frac{\varphi'\left(|x|_\Gamma\right)}{|x|_\Gamma}EAE^*.
\end{equation*}
And hence, the LHS of \eqref{lemma eq:3.1} becomes
\begin{align*}
\left(\varphi''\left(|x|_\Gamma\right)-\frac{\varphi'\left(|x|_\Gamma\right)}{|x|_\Gamma}\right)
\left(\varphi'(|x|_\Gamma)^2+\scal{x,\xi}_{\R^d}^2\right)
+\frac{\varphi'\left(|x|_\Gamma\right)}{|x|_\Gamma}\left(\varphi'\left(|x|_\Gamma\right)^2+\xi^*E^*E\xi\right).
\end{align*}

Next notice that from \eqref{eq:phi*}, $\abs{\xi_E}^2=\abs{\nabla_E\phi(x)}^2$ if and only if
\begin{equation*}
\xi^*E^*E\xi=\left(\frac{\varphi'\left(|x|_\Gamma\right)}{|x|_\Gamma}\right)^2x^*A^*E^*EAx=\varphi'\left(|x|_\Gamma\right)^2,
\end{equation*}
while $\scal{\xi_E,\nabla_E\phi(x)}_{\R^k}=0$ reduces to $\scal{x,\xi}_{\R^d}=0$.
Therefore, we end up with
\begin{align*}
    \nabla_E^*\phi\nabla^2_E\phi\nabla_E\phi+ \xi_E^*\nabla^2_E\phi\xi_E &= \left(\varphi''\left(|x|_\Gamma\right)+\frac{\varphi'\left(|x|_\Gamma\right)}{|x|_\Gamma}\right)
\varphi'(|x|_\Gamma)^2 \\
    &=c\frac{\left(1-c\arctan\log|x|_\Gamma\right)^2}{|x|_\Gamma^4\left(1+\log^2|x|_\Gamma\right)}\\
    &\geq C:=c\frac{\left(1-c\pi/2\right)^2}{4^4\left(1+4\log^22\right)}
\end{align*}
where $0<c<\dfrac{2}{\pi}$ when $1\leq|x|_\Gamma \leq 4$.

We next seek to argue that by continuity a similar lower bound also holds on $\mathcal{N}_{\tau,\mathcal{C}}$. To this end, note that for $\xi\in\mathcal{N}_{\tau,\mathcal{C}}$ we have $\abs{\xi}_E\in(C_{0,1},C_{0,2})$, where the constants only depend on $\gamma_0$ and on the graph $\Gamma$ and, in particular, 
they are independent of $\tau$ and $h$. Since for $\xi\in\mathcal{C}_\tau$, the condition on $\phi$ gives 
$$
\nabla_E^*\phi\nabla^2_E\phi\nabla_E\phi+ \xi_E^*\nabla^2_E\phi\xi_E \geq C >0,
$$ 
by continuity, on a sufficiently small neighborhood $\mathcal{N}_{\tau,\mathcal{C}}$, {\it i.e.} if $\gamma_0>0$
is small enough, we have 
$$
\nabla_E^*\phi\nabla^2_E\phi\nabla_E\phi+ \xi_E^*\nabla^2_E\phi\xi_E \geq \frac{C}{2} >0,
$$
which concludes the proof.
\end{proof}

Now let us define the operator $L_{\phi_\tau} : \ell^2(h\mathcal{V}) \mapsto \ell^2(h\mathcal{V})$.
\begin{multline}
 \label{eq:deflphitau}
    L_{\phi_\tau} f(x):=
    h^{-2}e^{\phi_\tau}\Delta_{h\Gamma} e^{-\phi_\tau}f(x)\\
    =  h^{-2} \sum_{j=1}^k\bigg(\cosh\left(D^+_j \phi_\tau(x)\right)f(x+he_j)-\sinh\left( D^+_j \phi_\tau(x)\right)f(x+he_j)\\
    +\cosh\left( D^-_j \phi_\tau(x)\right)f(x-he_j)+\sinh \left(D^-_j \phi_\tau(x)\right)f(x-he_j)-2f(x)\bigg).
\end{multline}
Its adjoint is obtained as follows:
\begin{align*}
    &\scal{L_{\phi_\tau} f,g}\\
    =& \frac{1}{h^2}\sum_{x\in h\mathcal{V}}\sum_{j=1}^k\bigg(\cosh\left(D^+_j \phi_\tau(x)\right)f(x+he_j)-\sinh\left( D^+_j\phi_\tau(x)\right)f(x+he_j) \\
    &+\cosh\left(D^-_j \phi_\tau(x)\right)f(x-he_j)+\sinh \left( D^-_j \phi_\tau(x)\right)f(x-he_j)-2f(x)\bigg)\overline{g(x)} \\
    =&\frac{1}{h^2}\sum_{x\in h\mathcal{V}}\sum_{j=1}^k f(x)\bigg(\cosh\left( D^-_j \phi_\tau(x)\right)\overline{g(x-he_j)}-\sinh\left( D^-_j \phi_\tau(x)\right)\overline{g(x-he_j)} \\
    &+\cosh\left(D^+_j \phi_\tau(x)\right)\overline{g(x+he_j)}+\sinh \left( D^+_j \phi_\tau(x)\right)\overline{g(x+he_j)}-2\overline{g(x)}\bigg)\\
    :=&\scal{f,L_{\phi_\tau}^*g}.
\end{align*}
Here we used that the graph is 1-point periodic so that the vertex set is reduced to $h\mathcal{L}$
and thus, if $x\in h\mathcal{V}$, so are $x+he_j$ and $x-he_j$.
We now decompose $L_{\phi_\tau}$ as a sum of a symmetric and an antisymmetric operator
$L_{\phi_\tau}=S_{\phi_\tau}+A_{\phi_\tau}$ with $S_{\phi_\tau}=\dfrac{L_{\phi_\tau}+L_{\phi_\tau}^*}{2}$ and $A_{\phi_\tau}=\dfrac{L_{\phi_\tau}-L_{\phi_\tau}^*}{2}$.
Explicitely,
\begin{equation*}
   S_{\phi_\tau}= \sum_{j=1}^kS_j,\quad A_{\phi_\tau}=\sum_{j=1}^kA_j,
\end{equation*}
with 
\begin{align}
    S_j f(x) =& \frac{1}{h^{2}} \Bigl(\cosh(D^+_j\phi_\tau(x))f(x+he_j)\notag\\
    &\qquad+\cosh(D^-_j\phi_\tau(x))f(x-he_j)-2f(x) \Bigr),\label{operatorsj1}\\
    =&\frac{1}{h^{2}}\Bigl(\bigl(\cosh(D^+_j\phi_\tau(x))-1\bigr)f(x+he_j)\notag\\
    &\qquad+\bigl(\cosh(D^-_j\phi_\tau(x))-1\bigr)f(x-he_j)+\Delta_jf(x)\Bigr),\label{operatorsj2}
  \end{align}
 while $A_jf(x)$ is given by
 \begin{equation}
    h^{-2} \left(\sinh(D^-_j\phi_\tau(x))f(x-he_j)-\sinh(D^+_j\phi_\tau(x))f(x+he_j) \right).\label{operatoraj} 
 \end{equation}
    
Let us now introduce a small parameter $\lambda>0$ to be fixed later (not too small).
We write $\mathcal{B}$ for the smallest ball containing all $e_j$'s.

\begin{definition}
    For $1\leq \tau\leq \dfrac{1}{h}$, a $\tau$-partition of unity for $B_2\setminus B_{1/2}$ will be a
    family of functions $\{\psi_n\}$ such that:

    -- there is a covering of $B_2\setminus B_{1/2}$ by a family $\{B_n\}$
of balls of radius $\tau^{-\frac{1}{2}}\lambda$ and we assume that the family $\{B_n^h:=B_n+h\mathcal{B}\}$
has a finite covering.

    -- $\{\psi_n\}$ is a smooth partition of unity subordinated to these balls, {\it i.e.} 
\begin{enumerate}
\renewcommand{\theenumi}{\roman{enumi}}
    \item $\psi_n\geq0$ and $\sum\psi_n=1$ on $B_2\setminus B_{1/2}$;

    \item $\supp\psi_n\subset B_n$;

    \item $\abs{\nabla\psi_n(x)}\lesssim \tau^{\frac{1}{2}} \lambda^{-1} \chi_{\supp_{B_n^h}(\psi_n)}(x)$;
    
    \item $\abs{\nabla^2\psi_n(x)}\lesssim \tau^{1} \lambda^{-2} \chi_{\supp(\psi_n)}(x)$.
\end{enumerate}
\end{definition}

\begin{lemma}\label{lemma:3.2}    There are $h_0,\delta_0\in(0,1)$ small enough such that, for $h\in(0,h_0)$, 
$\tau\in(1,\delta_0h^{-1})$, if
\begin{enumerate}
\renewcommand{\theenumi}{\roman{enumi}}
    \item $(\psi_n(x))_{n\in\Z}$ is a $\tau$-partition of unity for $B_2 \text{\textbackslash} B_{1/2}$;

    \item $\phi_\tau$ is given in Lemma \ref{lemma:3.1} and $L_{\phi_\tau}$ given by \eqref{eq:deflphitau};
\end{enumerate}
then for every $f\in \ell^2(h\mathcal{V})$, writing $f_n=f\psi_n$, we have
    \begin{multline*}
        \norm{L_\phi f}\leq \sum_n \norm{L_\phi f_n}\\ \leq C\norm{L_\phi f}+C\tau^{\frac{1}{2}}\lambda^{-1}\sum_{j=1}^k\norm{h^{-1}D_j^sf}+C\left(\tau\lambda^{-2}+\tau^{\frac{3}{2}}\lambda^{-1}+\tau^{\frac{5}{2}}h\lambda^{-1}\right)\norm{f}
    \end{multline*}
    where $c$ is a constant depending only on $\psi_n$ and $\varphi$.
\end{lemma}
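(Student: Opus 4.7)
The plan is to prove the two inequalities separately, the first being a direct consequence of linearity and the second being the main content, obtained by analyzing the commutator $L_{\phi_\tau} f_n - \psi_n L_{\phi_\tau} f$.

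For the left inequality, since $\sum_n \psi_n = 1$ on $B_2\setminus B_{1/2}$ (and, in the intended application, $f$ has support there), we have $f=\sum_n f_n$, hence $L_{\phi_\tau} f = \sum_n L_{\phi_\tau} f_n$ by linearity, and the triangle inequality yields the bound. For the right inequality, I would start from
\begin{equation*}
L_{\phi_\tau} f_n(x) = h^{-2} e^{\phi_\tau(x)}\Delta_{h\Gamma}\bigl(\psi_n\cdot e^{-\phi_\tau}f\bigr)(x)
\end{equation*}
and apply the discrete Leibniz rule
\begin{equation*}
\Delta_j(\psi_n G)=\psi_n\,\Delta_j G + G\,\Delta_j\psi_n + D_j^+\psi_n\, D_j^+G + D_j^-\psi_n\, D_j^-G
\end{equation*}
with $G=e^{-\phi_\tau}f$. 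The first summand reconjugates to $\psi_n L_{\phi_\tau} f$, and the remaining three form the commutator. The cross terms involve $e^{\phi_\tau(x)}D_j^\pm G(x)$, which I would expand as
\begin{equation*}
e^{\phi_\tau(x)}D_j^+G(x) = D_j^+f(x) + \bigl(e^{-D_j^+\phi_\tau(x)}-1\bigr)f(x+he_j)
\end{equation*}
(and the analogue for $D_j^-$), followed by the Taylor split $e^{-D_j^+\phi_\tau}-1 = -D_j^+\phi_\tau + R_j^+$ with $|R_j^+|\lesssim (h\tau)^2$.

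Next, I would estimate each of the four resulting pieces using the partition bounds $|D_j^\pm\psi_n|\lesssim h\tau^{1/2}\lambda^{-1}$ and $|\Delta_j\psi_n|\lesssim h^2\tau\lambda^{-2}$, together with $|D_j^\pm\phi_\tau|\lesssim h\tau$ on $B_2\setminus B_{1/2}$, where $\phi_\tau=\tau\varphi(|\cdot|_\Gamma)$ has bounded first and second derivatives. After dividing by $h^2$, the four contributions to $\|L_{\phi_\tau} f_n - \psi_n L_{\phi_\tau} f\|$, localized to $B_n^h$, come out respectively of the claimed sizes: the $G\,\Delta_j\psi_n$ piece yields $\tau\lambda^{-2}\|f\|_{B_n^h}$; the $D_j^\pm\psi_n\, D_j^\pm f$ piece yields $\tau^{1/2}\lambda^{-1}\|h^{-1}D_j^s f\|_{B_n^h}$, after combining the $\pm$ contributions via the rearrangement $D_j^+\psi_n D_j^+f + D_j^-\psi_n D_j^-f = \frac{1}{2}\bigl(D_j^s\psi_n\, D_j^sf + \Delta_j\psi_n\,\Delta_jf\bigr)$, the second term of which is absorbed back into the type-(i) bound; the linear-exponential $D_j^\pm\psi_n\cdot(-D_j^\pm\phi_\tau)f$ piece yields $\tau^{3/2}\lambda^{-1}\|f\|_{B_n^h}$; and the quadratic remainder yields $h\tau^{5/2}\lambda^{-1}\|f\|_{B_n^h}$. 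The admissibility condition $h\tau<\delta_0<1$ is crucial here, both to keep $|e^{\pm D_j^\pm\phi_\tau}|$ uniformly bounded and to control the Taylor remainder term.

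Finally, I would sum over $n$: the finite overlap property of $\{B_n^h\}$ that is built into the definition of a $\tau$-partition of unity guarantees that each local norm aggregates controllably, so that the $\psi_n L_{\phi_\tau} f$ terms produce the $C\|L_{\phi_\tau} f\|$ on the right, while the four commutator families produce the three error terms with the announced coefficients $\tau\lambda^{-2}$, $\tau^{3/2}\lambda^{-1}$, $\tau^{5/2}h\lambda^{-1}$ multiplying $\|f\|$, and $\tau^{1/2}\lambda^{-1}$ multiplying $\sum_j \|h^{-1}D_j^s f\|$. The main obstacle I anticipate is the step (ii) recombination of the forward and backward differences into the symmetric $D_j^s$, which is where the geometry of the partition (and the fact that $\tau^{-1/2}\lambda$ is large compared to $h$, so the shifts by $he_j$ stay in $B_n^h$) matters; and more generally the uniform tracking of all constants in $n$, $\tau$, and $h$ throughout the admissible range.
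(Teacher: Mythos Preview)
Your proposal is correct and reaches the same conclusion, but the organization differs from the paper's in a way worth noting. The paper does not work with the conjugated form $h^{-2}e^{\phi_\tau}\Delta_{h\Gamma}(\psi_n e^{-\phi_\tau}f)$ directly; instead it first writes $L_{\phi_\tau}=S_{\phi_\tau}+A_{\phi_\tau}$ in its explicit $\cosh/\sinh$ form (equations~\eqref{operatorsj2} and~\eqref{operatoraj}) and then commutes $\psi_n$ through each piece separately. For the $\Delta_j$ part of $S_{\phi_\tau}$ it uses the asymmetric product rule
\[
\Delta_j f_n=\psi_n\,\Delta_jf+D_j^sf\cdot\bigl(\psi_n(\cdot+he_j)-\psi_n\bigr)+f(\cdot-he_j)\,\Delta_j\psi_n,
\]
which produces the $D_j^s$ term immediately and avoids the recombination step you flag as the main obstacle. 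The $\tau^{5/2}h\lambda^{-1}$ and $\tau^{3/2}\lambda^{-1}$ contributions then come, respectively, from the $(\cosh-1)$ factors in $S_{\phi_\tau}$ and the $\sinh$ factors in $A_{\phi_\tau}$, rather than from the Taylor split of $e^{-D_j^\pm\phi_\tau}-1$. Your route via the symmetric Leibniz rule for $\Delta_j(\psi_n G)$ with $G=e^{-\phi_\tau}f$ is equally valid and arguably more self-contained, since it does not require introducing the symmetric/antisymmetric splitting; the paper's route has the advantage that $S_{\phi_\tau}$ and $A_{\phi_\tau}$ are already set up for the subsequent lemmas. The underlying estimates and the finite-overlap summation are identical.
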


\begin{proof} The first inequality follows directly from $f=\sum f_n$.

We will use the expression \eqref{operatorsj2} for $S_{\phi_\tau}$. A simple computation shows that
$\Delta_jf_n(x) =$
\begin{align*}
        \psi_n(x)\Delta_jf(x)+D^s_jf(x) \left(\psi_n(x+he_j)-\psi_n(x)\right)+f(x-he_j)\Delta_j\psi_n(x).
    \end{align*}
The second term is bounded by
$$
    \begin{aligned}
        \bigl|\bigl(f(x+he_j)-f(x-he_j)\bigr)& \bigl(\psi_n(x+he_j)-\psi_n(x)\big)\bigr| \\ 
        &\lesssim h \abs{\left(f(x+he_j)-f(x-he_j)\right)}\abs{\nabla \psi_n} \\
        &\lesssim h\tau^{\frac{1}{2}}\lambda^{-1} \abs{D^s_jf(x)}\chi_{\supp(\psi_n)}(x)
    \end{aligned}
$$
while the last one is bounded by
    \begin{eqnarray*}
        \abs{f(x-he_j)\Delta_j\psi_n(x) }&\lesssim& h^2\abs{f(x-he_j)}\abs{\nabla^2\psi_n}\\
        &\lesssim& \tau \lambda^{-2}\abs{f(x-he_j)}\chi_{\supp(\psi_n)}(x).
    \end{eqnarray*}
    We infer that
    \begin{multline}
        h^{-2}\abs{\Delta_j f_n(x)-\psi_n(x)\Delta_jf(x)}\\
        \lesssim
        h^{-1}\tau^{\frac{1}{2}}\lambda^{-1} \abs{D^s_jf(x)}\chi_{\supp(\psi_n)}(x)
        +\tau \lambda^{-2}\abs{f(x-he_j)}\chi_{\supp(\psi_n)}(x).
        \label{eq:sphitau1}
    \end{multline}  
   
    Similarly for $h^{-2}\left(\cosh(D^\pm_j\phi_\tau(x))-1\right)f_n(x\pm he_j)$ we obtain
    \begin{align*}
        h^{-2}&\bigl(\cosh(D^\pm_j\phi_\tau(x))-1\bigr)f_n(x\pm he_j)\\
        =&h^{-2}\left(\cosh(D^\pm_j\phi_\tau(x))-1\right)f(x\pm he_j)\psi_n(x\pm he_j) \\
        =& \psi_n(x)h^{-2}\left(\cosh(D^\pm_j\phi_\tau(x))-1\right)f(x\pm he_j)\\
        &+h^{-2}\left(\cosh(D^\pm_j\phi_\tau(x))-1\right)f(x\pm he_j)\left(\psi_n(x\pm he_j)-\psi_n(x)\right).
    \end{align*}
    The second term is the error term that we now estimate.
    
    First, there is a $y\in [x,x\pm he_j]$ such that 
    \begin{equation}\label{eq:lemma:3.3}
        \abs{D^\pm_j \phi_\tau(x)}=h\abs{\scal{\nabla\phi_\tau(y),e_j}_{\R^d}}=h\tau\abs{\scal{\nabla\varphi(|y|_\Gamma),e_j}_{\R^d}}\lesssim h\tau.
    \end{equation}  
    As $h\tau\leq 1$ and $\cosh t-1\lesssim t^2$ on $[0,C]$,
    \begin{align*}
        \abs{\cosh(D^\pm_j\phi_\tau(x))-1}\lesssim \abs{D^\pm_j\phi_\tau(y)}^2\lesssim \tau^2h^2,
    \end{align*}
    where $y$ is treated as an intermediate value. Hence
    \begin{align*}
        \big|h^{-2}\left(\cosh(D^\pm_j\phi_\tau(x))-1\right)&f(x\pm he_j)\left(\psi_n(x\pm he_j)-\psi_n(x)\right)\big| \\
        &\lesssim \tau^{\frac{5}{2}}\lambda^{-1} h\abs{f(x\pm he_j)}\chi_{ \supp(\psi_n)}(x).
    \end{align*}

    Finally, together with \eqref{eq:sphitau1}, we obtain that
    \begin{align*}
    |S_{\phi_\tau}f_n-\psi_nS_{\phi_\tau}f|\leq& 
    Ch^{-1}\tau^{\frac{1}{2}}\lambda^{-1} \abs{D^s_jf(x)}\chi_{\supp(\psi_n)}(x)\\
    &+C\tau \lambda^{-2}\abs{f(x-he_j)}\chi_{\supp(\psi_n)}(x)\\
    &+2C\tau^{\frac{5}{2}}\lambda^{-1} h\abs{f(x\pm he_j)}\chi_{ \supp(\psi_n)}(x).  
    \end{align*}
    
    Now for $A_{\phi_\tau}$, we can write
    \begin{align*}
        A_{\phi_\tau}&f_n(x)\\
        =&\psi_n(x) A_{\phi_\tau}f(x)+h^{-2}\sinh(D^-_j\phi_\tau(x))f_n(x-he_j)\left(\psi_n(x- he_j)-\psi_n(x)\right)\\
        &-h^{-2}\sinh(D^+_j\phi_\tau(x))f(x+he_j)\left(\psi_n(x+ he_j)-\psi_n(x)\right).
    \end{align*}
    We now use that $|\sinh t|\lesssim t$ on $[0,C]$ to get
    \begin{align*}
        \abs{\sinh(D^\pm_j\phi_\tau(x))}\lesssim \tau h,
    \end{align*}
    so that
    \begin{multline*}
         \abs{h^{-2}\sinh(D^\pm_j\phi_\tau(x))f_n(x\pm he_j)\left(\psi_n(x\pm he_j)-\psi_n(x)\right)}\\
         \lesssim \tau h^{-1}\abs{f(x\pm he_j}\abs{\nabla\psi_n} \lesssim \tau^{\frac{3}{2}}\lambda^{-1}\abs{f(x-e_j)}\chi_{\supp(\psi_n)}(x). 
    \end{multline*}  

    Combining everything into the expression on $L_{\phi_\tau}$ we obtain that
    
    \begin{align*}
        \sum_n \norm{L_{\phi_\tau} f_n} \leq& C \sum_n\norm{\psi_n L_{\phi_\tau} f} + \sum_n \left(C\tau^{\frac{1}{2}}\lambda^{-1}\sum_{j=1}^k\norm{h^{-1}D_j^sf \chi_{ \supp(\psi_n)}}\right.\\
        &\left.\phantom{\sum_{j=1}^k}+C(\tau\lambda^{-2}+\tau^{\frac{3}{2}}\lambda^{-1}+\tau^{\frac{5}{2}}h\lambda^{-1})\norm{f\chi_{ \supp(\psi_n)}} \right) \\
    \leq& C\norm{L_{\phi_\tau} f} + C\tau^{\frac{1}{2}}\lambda^{-1}\sum_{j=1}^k\norm{h^{-1}D_j^sf}\\
    &+C\left(\tau\lambda^{-2}+\tau^{\frac{3}{2}}\lambda^{-1}+\tau^{\frac{5}{2}}h\lambda^{-1}\right)\norm{f}
    \end{align*}
as claimed.    
\end{proof}

We then compute the commutator which writes
\begin{multline}\label{commutator:eq:1}
    [S_{\phi_\tau},A_{\phi_\tau}]f(x)=\sum_{i=1}^k\sum_{j=1}^k\sum_{\alpha,\beta=\pm 1} \\
    -\beta h^{-4}\bigr[\cosh(D_j^\alpha\phi_\tau(x))\sinh(D_i^\beta \phi_\tau(x+h\alpha e_j)) \\
    -\cosh(D_j^\alpha\phi_\tau(x+h\beta e_i))\sinh(D_i^\beta \phi_\tau(x))\bigl]f(x+h\alpha e_j+h\beta e_i).
\end{multline}
Since we have
$$
D_j^\alpha \phi_\tau(x+h\beta e_i)=\beta D_j^\alpha D_i^\beta \phi_\tau(x)+D_j^\alpha \phi_\tau(x),
$$
using hyperbolic trigonometry identities, the bracket in \ref{commutator:eq:1} writes
\begin{align*}
\cosh\left(D_j^\alpha\phi_\tau(x)\right)&\sinh\left(D_i^\beta \phi_\tau(x+h\alpha e_j)\right)\\
-\cosh&\left(D_j^\alpha\phi_\tau(x+h\beta e_i)\right)\sinh\left(D_i^\beta \phi_\tau(x)\right) \\
    =& \cosh\left(D_j^\alpha\phi_\tau(x)\right)\sinh\left(\alpha D_j^\alpha D_i^\beta \phi_\tau(x)\right)\cosh\left(D_i^\beta \phi_\tau(x)\right) \\
    &+ \cosh\left(D_j^\alpha\phi_\tau(x)\right)\cosh\left( D_j^\alpha D_i^\beta \phi_\tau(x)\right)\sinh\left(D_i^\beta \phi_\tau(x)\right) \\
    &- \sinh\left(D_i^\beta\phi_\tau(x)\right)\cosh\left( D_j^\alpha D_i^\beta \phi_\tau(x)\right)\cosh\left(D_j^\alpha \phi_\tau(x)\right) \\
    &- \sinh\left(D_i^\beta\phi_\tau(x)\right)\sinh\left(\beta D_j^\alpha D_i^\beta \phi_\tau(x)\right)\sinh\left(D_i^\beta \phi_\tau(x)\right) \\
    =& \sinh\left(\alpha D_j^\alpha D_i^\beta \phi_\tau(x)\right)\cosh\left(D_j^\alpha\phi_\tau(x)\right)\cosh\left(D_i^\beta \phi_\tau(x)\right) \\
    &- \sinh\left(\beta D_j^\alpha D_i^\beta \phi_\tau(x)\right)\sinh\left(D_i^\beta\phi_\tau(x)\right)\sinh\left(D_j^\alpha \phi_\tau(x)\right) \\
    =& \alpha \sinh\left( D_j^\alpha D_i^\beta \phi_\tau(x)\right)\cosh\left( \alpha D_j^\alpha \phi_\tau(x) - \beta D_i^\beta \phi_\tau(x)\right).
\end{align*}

Considering the product $ \langle [S_{\phi_\tau},A_{\phi_\tau}]f,f\rangle$, and translating $x$ to $x-\beta h e_i$ and also changing $\beta$ to $-\beta$ we obtain
\begin{multline}
    \langle [S_{\phi_\tau},A_{\phi_\tau}]f,f\rangle=   
    \frac{1}{h^4}\sum_{x\in h\mathcal{V}}\sum_{i=1}^k\sum_{j=1}^k\\\sum_{\alpha,\beta=\pm1} 
     \alpha\beta\sinh\left(D^\alpha_iD^\beta_j\phi_\tau(x)\right)\cosh\left(D_{i,j}^{\alpha,\beta}\phi_\tau(x)\right)f(x+\alpha he_i)\overline{f(x+\beta he_j)}    
\end{multline}
where $D_{i,j}^{\alpha,\beta}\phi_\tau(x)=\phi_\tau(x+\alpha he_i+\beta he_j)-\phi_\tau(x)$.

For the next lemma, we will approximate the hyperbolic identities using Taylor expansion 
which allows for easier manipulations. 

\begin{lemma}\label{lemma:3.3}
    Let $\phi_\tau$ be as in lemma \ref{lemma:3.1}. Let $S_j, A_j$ and $[S_i,A_j]f(x)$ be the quantities defined beforehand. Let
    \begin{align*}
        \Tilde{S}_jf(x) :=& h^{-2}\Delta_jf(x) + \frac{\scal{\nabla\phi_\tau(x),e_j}^2}{2}\left(f(x+he_j)+f(x-he_j)\right),\\
        \Tilde{A}_jf(x) :=& -h^{-1}\scal{\nabla\phi_\tau(x),e_j}D_j^sf(x), \\
        \mathbf{C}_{i,j}^{f,f}(x) :=& h^{-2}e_i^*\nabla^2\phi_\tau(x)e_jD_i^sf(x)\overline{D_j^sf(x)}\\
        &+ \frac{1}{2}\sum_{\alpha,\beta=\pm} \alpha \beta e_i^*\nabla^2\phi_\tau(x)e_j\abs{\scal{\nabla\phi_\tau(x),\alpha e_i+\beta e_j}}^2\\
        &\qquad\qquad\times f(x+\alpha he_i)\overline{f(x+\beta he_j)}.
    \end{align*}
    Let $\tau \in (1,h^{-1}\delta_0)$, where $h\in(0,h_0)$ with $\delta_0\in(0,1)$ and $h_0<\delta_0$. Then, for $\Tilde{S}_{\phi_\tau} f(x):=\dst\sum_{j=1}^k \Tilde{S}_j f(x)$ and $\Tilde{A}_{\phi_\tau} f(x):=\dst\sum_{j=1}^k \Tilde{A}_j f(x)$, and a function $f$ supported in $B_2 \setminus B_{1/2}$, we have
    \begin{align*}
        \norm{S_{\phi_\tau} f- \Tilde{S}_{\phi_\tau} f}&\lesssim(\delta_0\tau+\delta_0^2\tau^2)\norm{f},\\
        \norm{A_{\phi_\tau} f-\Tilde{A}_{\phi_\tau} f} &\lesssim(\tau+\delta_0\tau^2)\norm{f},\\
        \scal{[S_{\phi_\tau},A_{\phi_\tau}]f,f}- \sum_{x\in(h\mathcal{V})} \sum_{i,j=1}^k\mathbf{C}_{i,j}^{f,f}(x)&\lesssim (\tau^2+\delta_0^2\tau^3)\norm{f}^2 +\sum_{j=1}^k\norm{h^{-1}D_j^sf}^2.
    \end{align*}
\end{lemma}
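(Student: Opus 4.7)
The proof is built on Taylor expansions: of $\cosh$ and $\sinh$ around $0$, and of the finite differences of $\phi_\tau$ around its derivatives. On $B_4\setminus B_{1/2}$ one has $|\nabla^m\phi_\tau(x)|\lesssim\tau$ for every $m\ge 1$ (and $|e_j|$ is bounded by the rescaling \eqref{scaling:1}), so $|D_j^\pm\phi_\tau(x)|\lesssim h\tau$ and $|D_i^\alpha D_j^\beta\phi_\tau(x)|\lesssim h^2\tau$. The smallness $h\tau\le\delta_0\le 1$ places all arguments of the hyperbolic functions in a fixed bounded interval where $\cosh t - 1 = t^2/2 + O(t^4)$ and $\sinh t = t + O(t^3)$.

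For the first bound I substitute $\cosh(D_j^\pm\phi_\tau(x)) - 1 = \tfrac12(D_j^\pm\phi_\tau(x))^2 + O((h\tau)^4)$ and $D_j^\pm\phi_\tau(x) = h\scal{\nabla\phi_\tau(x),e_j} + O(h^2\tau)$ into \eqref{operatorsj2}. Squaring and dividing by $h^2$, the principal quadratic matches the weight $\tfrac12\scal{\nabla\phi_\tau,e_j}^2$ in $\tilde S_j$, leaving a coefficient of $f(x\pm he_j)$ of size $O(h\tau^2 + h^2\tau^4) = O(\delta_0\tau + \delta_0^2\tau^2)$, whence the first estimate by the triangle inequality. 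The second estimate follows in the same spirit: in \eqref{operatoraj} I use $\sinh t = t + O(t^3)$ together with the refinement $D_j^\pm\phi_\tau(x) = h\scal{\nabla\phi_\tau,e_j} \pm \tfrac{h^2}{2}e_j^*\nabla^2\phi_\tau(x) e_j + O(h^3\tau)$. The leading term reproduces $\tilde A_j$, the Hessian correction survives as an $O(\tau)$ multiplier of $f(x\pm he_j)$, and the cubic $\sinh$-remainder contributes $O(h\tau^3) = O(\delta_0\tau^2)$.

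The commutator bound is the main difficulty. Starting from the formula for $\scal{[S_{\phi_\tau},A_{\phi_\tau}]f,f}$ derived just before the lemma, I expand
\[
\sinh(D_i^\alpha D_j^\beta\phi_\tau) = D_i^\alpha D_j^\beta\phi_\tau + O((h^2\tau)^3), \qquad \cosh(D_{i,j}^{\alpha,\beta}\phi_\tau) = 1 + \tfrac12(D_{i,j}^{\alpha,\beta}\phi_\tau)^2 + O((h\tau)^4),
\]
and further Taylor-expand the finite differences themselves:
\[
D_i^\alpha D_j^\beta\phi_\tau(x) = h^2 e_i^*\nabla^2\phi_\tau(x) e_j + \tfrac{h^3}{2}\bigl(\alpha\,\nabla^3\phi_\tau[e_i,e_i,e_j] + \beta\,\nabla^3\phi_\tau[e_i,e_j,e_j]\bigr) + O(h^4\tau),
\]
together with $D_{i,j}^{\alpha,\beta}\phi_\tau(x) = h\scal{\nabla\phi_\tau,\alpha e_i+\beta e_j} + O(h^2\tau)$. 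Multiplying out, dividing by $h^4$ and pairing with $\alpha\beta f(x+\alpha he_i)\overline{f(x+\beta he_j)}$, the two $(\alpha,\beta)$-independent principal contributions $h^{-2}e_i^*\nabla^2\phi_\tau e_j$ and $\tfrac12 e_i^*\nabla^2\phi_\tau e_j\,\scal{\nabla\phi_\tau,\alpha e_i+\beta e_j}^2$ reproduce, via the identity $\sum_{\alpha,\beta=\pm}\alpha\beta f(x+\alpha he_i)\overline{f(x+\beta he_j)} = D_i^s f(x)\overline{D_j^s f(x)}$, exactly the two summands of $\mathbf{C}_{i,j}^{f,f}(x)$.

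What remains is to bound the residual error, which I organize by its parity in $(\alpha,\beta)$. Even-parity remainders pair again with $D_i^s f\,\overline{D_j^s f}$ and, after the $1/h^4$ normalization, carry an extra factor $h^2$ compared with the principal; Cauchy--Schwarz absorbs them into $\sum_j\norm{h^{-1}D_j^s f}^2$. Odd-parity remainders---most importantly the $\nabla^3\phi_\tau$-correction above, whose raw coefficient is $O(\tau/h)$---pair instead with $(f(x+he_i)+f(x-he_i))\overline{D_j^s f}$ or the symmetric expression; summing in $x$ and extracting a factor $h$ from the $D_j^s$-norm reduces the effective coefficient to $O(\tau)$, and Young's inequality $\tau\norm{f}\norm{h^{-1}D_j^s f}\le\tau^2\norm{f}^2+\norm{h^{-1}D_j^s f}^2$ splits the contribution between the two right-hand terms. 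The purely $|f|^2$-type residue, arising from the $O((h^2\tau)^3)$ cubic $\sinh$-error against $\cosh\approx 1$, is of size $O(h^2\tau^3) = O(\delta_0^2\tau)\le O(\delta_0^2\tau^3)$. The main obstacle is exactly this parity bookkeeping: the a priori dangerous $O(\tau/h)$ coefficient from the $\nabla^3\phi_\tau$ correction against $\cosh\approx 1$ does not end up multiplying a pure $|f|^2$ term, precisely because of its odd parity, and is therefore saved.
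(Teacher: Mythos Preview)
Your proof is correct and follows essentially the same route as the paper: Taylor-expand $\cosh$ and $\sinh$, then Taylor-expand the finite differences $D_j^\pm\phi_\tau$, $D_i^\alpha D_j^\beta\phi_\tau$, and $D_{i,j}^{\alpha,\beta}\phi_\tau$ in powers of $h$, and track the resulting error orders using $h\tau\le\delta_0$. The paper carries out the commutator step by writing the third-order Taylor polynomial of $\phi_\tau$ explicitly and then isolating the four terms of the form $(\text{third derivative})\cdot f(x\pm he_i)\cdot h^{-1}D_j^s f(x)$ before applying Cauchy--Schwarz; your parity organization is simply a cleaner way to package the same cancellation, namely that the dangerous $O(\tau/h)$ coefficient from $\nabla^3\phi_\tau$ survives only against $f\cdot D_s f$ and not against $|f|^2$.
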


\begin{proof}
We will use that for every $A$ there is a $B$ such that, 
\begin{equation}
    \label{eq:trivestcosh}
    \abs{\cosh x-1-\dfrac{x^2}{2}}\leq B|x|^4\qquad\mbox{when }|x|\leq A
\end{equation}.

    First we deal with the symmetric operator $S_{\phi_\tau}$. To do so, obseve first that, with the mean value property,
    for every $x$ and $j$, there is a $\xi$ in the segment $[x,x\pm he_j]$ such that
    $$
    \cosh(D_j^\pm \phi_\tau(x))-1=\frac{h^2}{2}\scal{\nabla\phi_\tau(\xi),e_j}^2+E_1
    $$
    with $|E_1|\leq Ch^4\tau^4$ with $C$ depending on $\ffi'$ only. Here we use that $x\pm he_i$ stays in $B_4\setminus B_{1/4}$ thus so is $\xi$ and therefore $\scal{\nabla\phi_\tau(\xi),e_j}^2$ is bounded by some fixed constant $A$
    independent of $x,j$.
    
    Further, using the mean value property again, 
    $$
    \scal{\nabla\phi_\tau(\xi),e_j}^2=\scal{\nabla\phi_\tau(x),e_j}^2+E_2
    $$
    with $|E_2|\leq Ch\tau^2$ for some constant $C$ that depends on $\ffi$ and $\ffi'$ only.
    It follows that
$$
\cosh(D_j^\pm \phi_\tau(x))-1=\frac{h^2}{2}\scal{\nabla\phi_\tau(x),e_j}^2+E_3
$$
with $|E_3|\leq C(h^3\tau^2+h^4\tau^4)$. Summing over $x$ and $j$, we obtain
    \begin{equation*}
        \|S_{\phi_\tau}f-\Tilde{S}_{\phi_\tau}f\|\lesssim \frac{h^3\tau^2+h^4\tau^4}{h^2}\|f\|
        \leq C(\delta_0\tau+\delta_0^2\tau^2)\|f\|
    \end{equation*}
    since $h\tau\leq\delta_0$.

\smallskip

The anti-symmetric operator $A_{\phi_\tau}$ is dealt with in a similar way but using that,
for every $A>0$ there is a $B$ such that $|\sinh x-x|\leq B|x|^3$ when $|x|\leq A$. We then get that
$$
\sinh(D_j^\alpha\phi_\tau(x))=\alpha h\scal{\nabla\phi_\tau(x),e_j}+E_4
$$
with $|E_4|\leq C(h^2\tau+h^3\tau^3)$. Summing over $x$ and $j$, we obtain the desired estimate.

\smallskip

    Finally, we consider the commutator. First, using again \eqref{eq:trivestcosh}
    and approximating the difference operator $D^{\alpha,\beta}_{i,j}\phi_\tau(x)$ as before we get
    $$
    \cosh\left(D_{i,j}^{\alpha,\beta} \phi_\tau(x)\right)-1=\frac{h^2}{2}\scal{\nabla\phi_\tau(x),\alpha e_i+\beta e_j}^2+E_5,
    $$
    with $|E_5|\leq C(h^3\tau^2+h^4\tau^4)$. For the terms in $\sinh$ we will approximate the difference by a Taylor polynomial of order 3. Writing $g(s,t)=\phi_\tau(x+se_i+te_j)$ and computing the third degree Taylor polynomial of $g$ at $(0,0)$ we get
    \begin{multline}\label{P3:eq1}
        P_3g(s,t)= g(0,0)+\frac{\partial}{\partial s} g(0,0) s+\frac{\partial}{\partial t} g(0,0) t \\
        + \frac{1}{2}\frac{\partial^2}{\partial s^2} g(0,0) s^2+\frac{1}{2}\frac{\partial^2}{\partial t^2} g(0,0) t^2 +\frac{\partial^2}{\partial s \partial t} g(0,0) st \\
        + \frac{1}{6}\frac{\partial^3}{\partial s^3}g(0,0) s^3 +\frac{1}{6}\frac{\partial^3}{\partial s^3}g(0,0)t^3+\frac{1}{2}\frac{\partial^3}{\partial s^2 \partial t}g(0,0)s^2t+\frac{1}{2}\frac{\partial^3}{\partial s \partial t^2}g(0,0) st^2.
    \end{multline}
    The difference inside the $\sinh$ writes
    \begin{multline*}
        D^\alpha_iD^\beta_j\phi_\tau(x) =\beta D_i^\alpha \left(\phi_\tau(x+\beta h e_j)-\phi_\tau(x)\right) \\
        = \alpha \beta \left(\phi_\tau(x+\alpha h e_i+\beta h e_j)-\phi_\tau(x+\beta h e_j)-\phi_\tau(x+\alpha h e_i)+\phi_\tau(x)\right).
    \end{multline*}
    Applying \eqref{P3:eq1} to the above we get
    \begin{align*}
    D^\alpha_iD^\beta_j\phi_\tau(x)= h^2 e_i^*\nabla^2\phi_\tau(x) e_j + \alpha h^3 T_1 \phi_\tau(x)+\beta h^3T_2\phi_\tau(x)+E_6
    \end{align*}
    with $|E_6|\leq Ch^4\tau$, $T_1\phi_\tau(x)$, $T_2\phi_\tau(x)$ being the terms of order 3 so that $|T_i\phi_\tau(x)|\leq C\tau$. Hence
    $$
    \sinh(D^\alpha_iD^\beta_j\phi_\tau(x))=h^2 e_i^*\nabla^2\phi_\tau(x) e_j +\alpha h^3 T_1 \phi_\tau(x)+\beta h^3T_2\phi_\tau(x)+E_7,
    $$
    with $|E_7|\leq C(h^4\tau+h^6\tau^3)$.
    A multiplication of the above yields
    \begin{multline*}
        \frac{1}{h^4}\sum_{\alpha,\beta=\pm1} \alpha\beta\sinh(D^\alpha_iD^\beta_j\phi_\tau(x))\cosh\big(D_{i,j}^{\alpha,\beta}\phi_\tau(x)\big)f(x+h\alpha e_i)\overline{f(x+h\beta e_j)} \\
        = \mathbf{C}_{i,j}^{f,f}(x) +\sum_{\alpha,\beta=\pm1} h^{-1}\left[\beta  T_1 \phi_\tau(x)+\alpha T_2\phi_\tau(x)\right]f(x+h\alpha  e_i)\overline{f(x+\beta he_j)}\\
        +\sum_{\alpha,\beta=\pm}E_8f(x+h\alpha e_i)\overline{f(x+h\beta e_j)},
    \end{multline*}
    with $|E_8|\leq C(+\tau+\delta_0\tau^2+\delta_0^2\tau^3)\lesssim\tau^2+\delta_0^2\tau^3$ after simplification using that $\tau h\leq \delta_0$. 
    It remains to bound the third order term, which can be expressed in the form 
    \begin{align*}
        \sum_{\alpha,\beta=\pm1}& \left[\beta  T_1 \phi_\tau(x)+\alpha T_2\phi_\tau(x)\right]h^{-1}f(x+h\alpha e_i)\overline{f(x+\beta he_j)} \\
        &= \frac{1}{2}\partial_s\partial_t^2g(0,0)f(x+he_i)\overline{\left(h^{-1}D_j^s f(x)\right)} \\
        &+ \frac{1}{2}\partial_s\partial_t^2g(0,0)f(x-he_i)\overline{\left(h^{-1}D_j^s f(x)\right)} \\
        &+\frac{1}{2}\partial_s^2\partial_tg(0,0)\overline{f(x+he_j)}\left(h^{-1}D_i^s f(x)\right) \\
        &+ \frac{1}{2}\partial_s^2\partial_tg(0,0)\overline{f(x-he_j)}\left(h^{-1}D_i^s f(x)\right).
    \end{align*}
    Using Cauchy-Schwarz inequality, we get
    \begin{multline*}
         \sum_{x\in h\mathcal{V}}\sum_{i,j=1}^k\sum_{\alpha,\beta=\pm1} \alpha\beta T_3^{\alpha,\beta}\phi_\tau(x)h^{-1}f(x+h\alpha e_i)\overline{f(x+\beta he_j)} \\
         \leq C\tau^2\norm{f}^2 + C\sum_{j=1}^k\norm{h^{-1}D_j^s f}^2,
    \end{multline*}
    and thus the desired estimate.
\end{proof}

As for our next step, we freeze coefficients in the operators when acting on functions supported in sets of size $\lambda\tau^{-\frac{1}{2}}$ for $\lambda>0$ and $\tau >1$ sufficiently large, both of which needing to be determined when we finally prove the Carleman estimate \eqref{carleman:eq1}.

\begin{lemma}\label{lemma3:4} 
Assume that $1<\tau\leq \delta_0h^{-1}$ for a sufficiently small constant $\delta_0>0$.
Let $B\subset B_2\text{\textbackslash}\overline{B}_{\frac{1}{2}}$ be a ball of
radius $\lambda\tau^{-\frac{1}{2}}$ and $\overline{x}\in B$. 
Let $\phi_\tau$ be as in Lemma \ref{lemma:3.1}.
Set 
    \begin{align*}
        \overline{S_j}f(x) :=& h^{-2}\Delta_jf(x) + \frac{\scal{\nabla\phi_\tau(\overline{x}),e_j}^2}{2}\left(f(x+he_j)+f(x-he_j)\right),\\
        \overline{A_j}f(x) :=& -h^{-1}\scal{\nabla\phi_\tau(\overline{x}),e_j}D_j^sf(x), \\
        \overline{\mathbf{C}}_{i,j}^{f,f}(x) :=& h^{-2}e_i^*\nabla^2\phi_\tau(\overline{x})e_jD_i^sf(x)\overline{D_j^sf(x)}\\
        &+ \frac{1}{2}\sum_{\alpha,\beta=\pm} \alpha \beta e_i^*\nabla^2\phi_\tau(\overline{x})e_j\abs{\scal{\nabla\phi_\tau(\overline{x}),e_i+\alpha\beta e_j}}^2\\
        &\qquad\quad\times f(x+\alpha he_i)\overline{f(x+\beta he_j)}.
    \end{align*}
Then for $f\in \mathcal{C}_c^\infty(B_2\text{\textbackslash}\overline{B}_{\frac{1}{2}})$ with $\supp{(f)}\subset B$
        \begin{align*}
        \bigl|\|\Tilde{S}_{\phi_\tau} f\| &- \|\overline{S}_{\phi_\tau} f\|\bigr| \leq C\tau^{\frac{3}{2}}\lambda\norm{f},\\
          \bigl|\|\Tilde{A}_{\phi_\tau} f\| &- \|\overline{A}_{\phi_\tau} f\|\bigr|\leq C\tau^{\frac{1}{2}}\lambda\sum_{j=1}^k\norm{h^{-1}D_j^sf},\\
         \sum_{x\in(h\mathcal{V})} \sum_{i,j=1}^k\abs{\mathbf{C}_{i,j}^{f,f}-\overline{\mathbf{C}}_{i,j}^{f,f}} &\leq C\tau^{\frac{1}{2}}\lambda\sum_{j=1}^k \norm{h^{-1}D^s_jf}^2+ C\tau^{\frac{5}{2}}\lambda\norm{f}^2.
    \end{align*}
\end{lemma}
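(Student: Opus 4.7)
The plan is to treat the three estimates in parallel, all via the same basic mechanism: the coefficients of the operators $\tilde S_{\phi_\tau}$, $\tilde A_{\phi_\tau}$ and of $\mathbf C^{f,f}_{i,j}$ are smooth functions of $x$ built out of $\nabla\phi_\tau$ and $\nabla^2\phi_\tau$, and since $\phi_\tau(x)=\tau\varphi(|x|_\Gamma)$ with $\varphi$ smooth on $[1/4,4]$, every derivative of $\phi_\tau$ of order $\ge 1$ is bounded by $C\tau$ uniformly on $B_4\setminus B_{1/4}$. A mean value argument between $x\in\supp f\subset B$ and $\bar x\in B$ then produces a gain of order $|x-\bar x|\lesssim \lambda\tau^{-1/2}$ per derivative, which is exactly what is needed to match the powers of $\tau^{1/2}\lambda$ appearing in the right-hand sides.

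First, for the symmetric piece I would write
\[
(\tilde S_{\phi_\tau}-\overline S_{\phi_\tau})f(x)=\tfrac12\sum_{j=1}^k\bigl(\scal{\nabla\phi_\tau(x),e_j}^2-\scal{\nabla\phi_\tau(\bar x),e_j}^2\bigr)\bigl(f(x+he_j)+f(x-he_j)\bigr)
\]
and factor the difference of squares. One factor is $O(\tau)$ and the other, by the mean value theorem applied to $\nabla\phi_\tau$, is bounded by $\|\nabla^2\phi_\tau\|_\infty|x-\bar x|\lesssim \tau\cdot\lambda\tau^{-1/2}=\tau^{1/2}\lambda$. The translation invariance of $\|\cdot\|_{\ell^2(h\mathcal V)}$ on the 1-point periodic graph then gives $\|\tilde S_{\phi_\tau}f-\overline S_{\phi_\tau}f\|\le C\tau^{3/2}\lambda\|f\|$. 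For the antisymmetric piece the same mean value estimate on $\nabla\phi_\tau$ yields a factor $\tau^{1/2}\lambda$, but now multiplied directly by $h^{-1}D_j^s f(x)$, giving the second inequality upon summing over $j$ and taking the $\ell^2$ norm.

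For the commutator term I would split $\mathbf C^{f,f}_{i,j}-\overline{\mathbf C}^{f,f}_{i,j}$ into its two summands. In the first summand only the Hessian $\nabla^2\phi_\tau$ is frozen, so the mean value theorem applied to $\nabla^2\phi_\tau$ (using $\|\nabla^3\phi_\tau\|_\infty\lesssim\tau$) yields the factor $\tau^{1/2}\lambda$; Cauchy--Schwarz in $x$ then turns the bilinear form $h^{-2}D_i^s f\,\overline{D_j^s f}$ into $\sum_j\|h^{-1}D_j^s f\|^2$, producing the term $C\tau^{1/2}\lambda\sum_j\|h^{-1}D_j^s f\|^2$. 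In the second summand, both $e_i^*\nabla^2\phi_\tau(\cdot)e_j$ (size $\lesssim\tau$) and $|\scal{\nabla\phi_\tau(\cdot),\alpha e_i+\beta e_j}|^2$ (size $\lesssim\tau^2$) depend on $x$; writing the difference as a telescoping sum of two terms (first change one factor, then the other) and applying the mean value theorem to each gives a total bound of order $\tau^{1/2}\lambda\cdot\tau^2+\tau\cdot\tau^{1/2}\lambda\cdot\tau=\tau^{5/2}\lambda$ for the coefficient. Summing $\sum_x |f(x+\alpha he_i)||f(x+\beta he_j)|\lesssim\|f\|^2$ by Cauchy--Schwarz and translation invariance produces the $C\tau^{5/2}\lambda\|f\|^2$ contribution.

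The proof is thus essentially bookkeeping. The only mildly delicate point is the second summand of the commutator, where three factors in the coefficient depend on $x$ and one must avoid spurious losses by telescoping carefully and using $\tau h\le\delta_0\le1$ whenever a higher-order error ($O(h)$ times a polynomial in $\tau$) would otherwise show up; since however these coefficients are \emph{exactly} evaluated (there is no Taylor expansion in $h$ at this stage), no such error appears and the mean value argument alone closes all three estimates.
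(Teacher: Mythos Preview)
Your proposal is correct and follows essentially the same approach as the paper: reverse triangle inequality, then Mean Value Theorem on the coefficients $\nabla\phi_\tau$, $\nabla^2\phi_\tau$ (using that all derivatives of $\phi_\tau$ of order $\geq 1$ are $O(\tau)$ on the annulus) together with $|x-\bar x|\lesssim\lambda\tau^{-1/2}$, concluding with Cauchy--Schwarz for the commutator terms. Your telescoping of the product $e_i^*\nabla^2\phi_\tau(\cdot)e_j\cdot|\scal{\nabla\phi_\tau(\cdot),\alpha e_i+\beta e_j}|^2$ is exactly what the paper does in one line when it bounds this difference by $\tau^3|x-\bar x|\lesssim\tau^{5/2}\lambda$.
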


\begin{proof}
    The proof follows from the triangle inequality and the Mean Value Theorem. For the symmetric operators, we write
    \begin{multline*}
               \abs{\norm{\Tilde{S}_{\phi_\tau} f} - \norm{\overline{S}_{\phi_\tau} f}}\\
               \leq
        C\norm{\left(\scal{\nabla\phi_\tau(x),e_j}^2-\scal{\nabla\phi_\tau(\overline{x}),e_j}^2\right)\left(f(x+he_j)+f(x-he_j)\right)}.
        \end{multline*}
    Then the Mean Value Theorem implies that
    $$
    |\scal{\nabla\phi_\tau(x),e_j}^2-\scal{\nabla\phi_\tau(\overline{x}),e_j}^2|\leq C\tau^2|x-\bar x|
    $$
    where $C$ depends only on the first 2 derivatives of $\ffi$. As $|x-\bar x|\lesssim \tau^{-1/2}\lambda$ when 
    $x\in\supp f$, the result follows.
    
    For the antisymmetric  part, we do the same using
     $$
    |\scal{\nabla\phi_\tau(x),e_j}-\scal{\nabla\phi_\tau(\overline{x}),e_j}|\lesssim \tau|x-\bar x|\lesssim \tau^{1/2}\lambda.
    $$
For the commutator, we use
$$
|\nabla^2\phi_\tau(x)-\nabla^2\phi_\tau(\overline{x})|\lesssim \tau|x-\bar x|\lesssim \tau^{1/2}\lambda
$$
for the first part and, for the second part,
\begin{multline*}
\Bigl|e_i^*\nabla^2\phi_\tau(x)e_j\abs{\scal{\nabla\phi_\tau(x),\alpha e_i+\beta e_j}}^2\\
-e_i^*\nabla^2\phi_\tau(\overline{x})e_j\abs{\scal{\nabla\phi_\tau(\overline{x}),\alpha e_i+\beta e_j}}^2\Bigr|\\
\lesssim \tau^3|x-\bar x|\lesssim \tau^{5/2}\lambda,
\end{multline*}
and conclude with Cauchy-Schwarz.
\end{proof}

The next proposition is the last step we have to introduce before proving Theorem \ref{Carleman:1}.

\begin{proposition}\label{proposition:3.5}
There exists $\tau_1$ such that, for every $\tau_0>\tau_1$, there exist $C_\text{low}>0, c_0>\dfrac{1}{\tau_0}, h_0,\delta_0\in(0,1)$ such that for 
$\tau \in (\tau_0,\delta_0h^{-1})$, $h\in(0,h_0)$, 
 if $B\subset B_2\setminus\overline{B}_{\frac{1}{2}}$ is a ball of
radius $\lambda\tau^{-\frac{1}{2}}$ and $\overline{x}\in B$;
and $\overline{S_{\phi_\tau}}, \overline{A_{\phi_\tau}}$ and $\overline{\mathbf{C}}^{f,f}_{i,j}$ 
are as in the last lemma, then for every $f\in\mathcal{C}_c^\infty(B_2\setminus B_{\frac{1}{2}})$ supported in $B$,
we have
\begin{multline}
\label{eqproposition:3.5}
        \norm{\overline{S}_{\phi_\tau} f}^2+\norm{\overline{A}_{\phi_\tau} f}^2+c_0\tau \sum_{x\in(h\mathcal{V})} \sum_{i,j=1}^k\overline{\mathbf{C}}_{i,j}^{f,f}\\
        \geq C_\text{low}\left(\tau^4\norm{f}^2+\tau^2h^{-2}\sum_{j=1}^k\norm{D_j^sf}^2+h^{-4}\sum_{j=1}^k\norm{(D_j^s)^2f}^2\right).
    \end{multline}
\end{proposition}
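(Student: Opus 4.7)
The plan is to exploit that, after freezing coefficients at $\overline{x}$, the operators $\overline{S}_{\phi_\tau}$, $\overline{A}_{\phi_\tau}$ and the quadratic form $\sum_{i,j}\overline{\mathbf{C}}_{i,j}^{f,f}$ are all translation-invariant on $h\mathcal{V}=h\mathcal{L}$, hence Fourier multipliers under $\mathcal{U}$. I would apply Parseval and reduce the inequality to a pointwise lower bound on the combined symbol. Using the identities $\mathcal{U}\Delta_j f(\xi)=-4\sin^2(\pi h\scal{e_j,\xi})\hat f(\xi)$ and $\mathcal{U}D_j^s f(\xi)=-2i\sin(2\pi h\scal{e_j,\xi})\hat f(\xi)$, direct computation yields
\begin{eqnarray*}
\sigma_S(\xi)&=&-\frac{4}{h^{2}}\sum_{j=1}^k\sin^2\bigl(\pi h\scal{e_j,\xi}\bigr)+\sum_{j=1}^k\scal{\nabla\phi_\tau(\overline{x}),e_j}^2\cos\bigl(2\pi h\scal{e_j,\xi}\bigr),\\
\sigma_A(\xi)&=&\frac{2i}{h}\sum_{j=1}^k\scal{\nabla\phi_\tau(\overline{x}),e_j}\sin\bigl(2\pi h\scal{e_j,\xi}\bigr),
\end{eqnarray*}
together with an explicit real trigonometric polynomial $\sigma_{\mathbf{C}}(\xi)$ for the commutator. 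The three target norms similarly become integrals of $|\hat f|^2$ against $\tau^4$, against $4\tau^2h^{-2}\sum_j\sin^2(2\pi h\scal{e_j,\xi})$ and against $16h^{-4}\sum_j\sin^4(2\pi h\scal{e_j,\xi})$, which, using the rescaling \eqref{scaling:1} that forces $|h\scal{e_j,\xi}|\leq 1/4$, are comparable to $\tau^4$, $\tau^2|\xi_E|^2$ and $|\xi_E|^4$ respectively. The proposition thus reduces to proving the pointwise inequality
\begin{equation*}
|\sigma_S(\xi)|^2+|\sigma_A(\xi)|^2+c_0\tau\,\sigma_{\mathbf{C}}(\xi)\;\geq\; C_{\text{low}}\bigl(\tau^4+\tau^2|\xi_E|^2+|\xi_E|^4\bigr),
\end{equation*}
uniformly in $\overline{x}\in B_2\setminus\overline{B}_{1/2}$, $\tau\in(\tau_0,\delta_0h^{-1})$, $\xi\in\mathbb{T}_h^d$.

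To establish this pointwise inequality, I would Taylor-expand in the small quantity $h\scal{e_j,\xi}$, making repeated use of $h\tau\leq\delta_0$. This produces the continuous approximations
\begin{equation*}
\sigma_S(\xi)=-4\pi^2|\xi_E|^2+|\nabla_E\phi_\tau(\overline{x})|^2+O(\delta_0^2\tau^4),\quad \sigma_A(\xi)=4\pi i\scal{\xi_E,\nabla_E\phi_\tau(\overline{x})}+O(\delta_0\tau^3),
\end{equation*}
together with an analogous expansion of $\sigma_{\mathbf{C}}$ whose leading part is precisely $\nabla_E^*\phi_\tau(\overline{x})\nabla_E^2\phi_\tau(\overline{x})\nabla_E\phi_\tau(\overline{x})+\xi_E^*\nabla_E^2\phi_\tau(\overline{x})\xi_E$, the quantity controlled from below in Lemma \ref{lemma:3.1}. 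I would then split into two cases. \emph{Off the characteristic variety,} i.e.\ $\xi\notin\mathcal{N}_{\tau,\mathcal{C}}$, at least one of $\bigl||\xi_E|^2-|\nabla_E\phi_\tau(\overline{x})|^2\bigr|$ or $|\scal{\xi_E,\nabla_E\phi_\tau(\overline{x})}|$ is bounded below by a definite multiple of $\tau\max(\tau,|\xi_E|)$, so that either $|\sigma_S|^2$ or $|\sigma_A|^2$ alone dominates $\tau^4+\tau^2|\xi_E|^2+|\xi_E|^4$; in the high-frequency regime $|\xi_E|\gg\tau$ the bound $|\sigma_S|^2\sim|\xi_E|^4$ is automatic from the Laplacian part. \emph{On the characteristic variety} $\mathcal{N}_{\tau,\mathcal{C}}$ one has $|\xi_E|\sim\tau$, so all three target quantities are of order $\tau^4$, and Lemma \ref{lemma:3.1} provides $\sigma_{\mathbf{C}}(\xi)\geq C\tau^3$; taking $c_0$ large enough (compatibly with $c_0>1/\tau_0$ for $\tau_0$ large) produces the required $c_0\tau\sigma_{\mathbf{C}}(\xi)\geq C_{\text{low}}\tau^4$.

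The main obstacle I anticipate is the uniform absorption of the discretisation errors: the Taylor remainders in $\sigma_S,\sigma_A,\sigma_{\mathbf{C}}$ are of order $\delta_0^{\kappa}\tau^{4-\kappa}$ for various $\kappa\in\{0,1,2\}$, which share the $\tau$-homogeneity of the main terms produced by Lemma \ref{lemma:3.1} and must therefore be strictly smaller, not merely of lower order. This forces the parameters to be chosen in a precise sequential manner: first $\tau_0$ is taken large enough that subprincipal $O(\tau^{-1})$ corrections to the bound of Lemma \ref{lemma:3.1} are negligible; then $c_0$ is fixed (with $c_0>1/\tau_0$) so that $c_0\tau\sigma_{\mathbf{C}}$ dominates on $\mathcal{N}_{\tau,\mathcal{C}}$; and finally $\delta_0$ (hence $h_0<\delta_0$) is shrunk so that the $O(\delta_0)$ and $O(\delta_0^2)$ losses in the Taylor expansions are absorbed. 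A secondary technicality is the continuous matching of the two regions: one possibly shrinks the parameter $\gamma_0$ of Lemma \ref{lemma:3.1} so that the commutator lower bound persists throughout $\mathcal{N}_{\tau,\mathcal{C}}$, while the quadratic part $|\sigma_S|^2+|\sigma_A|^2$ already suffices just outside it, so that a single constant $C_{\text{low}}$ works everywhere.
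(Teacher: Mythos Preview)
Your overall architecture---Fourier multipliers via Parseval, Taylor expansion in $h\scal{e_j,\xi}$, and a split into a neighbourhood of the characteristic set $\mathcal{N}_{\tau,\mathcal{C}}$ versus its complement---is exactly the paper's approach. The reduction to a pointwise symbol inequality and the use of Lemma~\ref{lemma:3.1} on $\mathcal{N}_{\tau,\mathcal{C}}$ are correct.

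There is, however, a genuine gap in how you handle the commutator off the characteristic set, and it propagates to a wrong choice of $c_0$. Off $\mathcal{N}_{\tau,\mathcal{C}}$ you argue that $|\sigma_S|^2+|\sigma_A|^2$ alone dominates $\tau^4+\tau^2|\xi_E|^2+|\xi_E|^4$, but you do not address the contribution of $c_0\tau\,\sigma_{\mathbf{C}}(\xi)$ there. The Hessian $\nabla_E^2\phi_\tau(\overline{x})$ is indefinite (the Carleman weight is neither convex nor concave), so the leading part $\xi_E^*\nabla_E^2\phi_\tau(\overline{x})\xi_E+\nabla_E\phi_\tau^*\nabla_E^2\phi_\tau\nabla_E\phi_\tau$ has no sign away from $\mathcal{N}_{\tau,\mathcal{C}}$; in fact $|\sigma_{\mathbf{C}}(\xi)|$ is of the same order $\tau^3+\tau|\xi_E|^2$ as the quantity you want to bound, so $c_0\tau\,\sigma_{\mathbf{C}}$ can be a negative term of size $c_0(\tau^4+\tau^2|\xi_E|^2)$. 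If you take $c_0$ ``large enough'' as you propose, this negative piece destroys the lower bound you just obtained from $|\sigma_S|^2+|\sigma_A|^2$.

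The correct logic (and the paper's) is the reverse: one first chooses $c_0>0$ \emph{small} enough that, off $\mathcal{N}_{\tau,\mathcal{C}}$, the bound $c_0|\tau\sigma_{\mathbf{C}}(\xi)|\leq \tfrac{1}{2}\bigl(|\sigma_S|^2+|\sigma_A|^2\bigr)$ holds and the commutator is absorbed. On $\mathcal{N}_{\tau,\mathcal{C}}$, Lemma~\ref{lemma:3.1} gives $\tau\sigma_{\mathbf{C}}(\xi)\geq C\tau^4$, and since $C_{\text{low}}$ is at our disposal, any fixed $c_0>0$ suffices there (with $C_{\text{low}}$ proportional to $c_0$). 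The constraint $c_0>1/\tau_0$ then \emph{forces} $\tau_0$ large, i.e.\ determines $\tau_1=1/c_0$; your ordering ``first $\tau_0$ large, then $c_0$'' is backwards. Once you reverse this, and explicitly absorb $c_0\tau\sigma_{\mathbf{C}}$ into $|\sigma_S|^2+|\sigma_A|^2$ in both the high-frequency region $|\xi_E|\gtrsim\tau$ and the region $|\xi_E|\lesssim\tau$, $\xi\notin\mathcal{N}_{\tau,\mathcal{C}}$, the proof goes through.
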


\begin{proof}
To simplify notation, we write $\xi_j=2\pi\scal{\xi,e_j}_{\R^d}$.
    Since the operators have constant coefficients ($\overline{x}$ is fixed) we can perform a Fourier transform and use Parseval.
    Set
    $$
p_{s,j}(\xi):=2h^{-2}\left(\cos\left(h\xi_j\right)-1\right)+\scal{\nabla\phi_\tau(\overline{x}),e_j}^2\cos\left(h\xi_j\right),  
$$
and $p_s(\xi)=\dst\sum_{j=1}^k p_{s,j}(\xi)$ so that
$$
\norm{\overline{S}_\phi f}^2=\norm{p_s\hat f}^2.
$$
Next, set
$$
    p_{a,j}(\xi):= 2h^{-1}\scal{\nabla\phi_\tau(\overline{x}),e_j}\sin\left(h\xi_j\right),
$$
and $p_a(\xi)=\dst\sum_{j=1}^k p_{a,j}(\xi)$, then
$$
        \norm{\overline{A}_\phi f}^2=\norm{p_a\hat f}^2.
$$
 Finally, set $q(\xi)=q_1(\xi)+q_2(\xi)$ so that
 $$
c_0\tau \sum_{x\in(h\mathcal{V})} \sum_{i,j=1}^k\overline{\mathbf{C}}_{i,j}^{f,f}=c_0\scal{q(\xi)\hat{f},\hat{f}}=c_0\scal{q_1(\xi)\hat{f},\hat{f}}+c_0\scal{q_2(\xi)\hat{f},\hat{f}},
 $$
 with
 \begin{equation}\label{proposition:eq1}
     q_1(\xi)=4\tau h^{-2}\sum_{i,j=1}^k\sin(h\xi_i)\sin(h\xi_j)e_i^*\nabla^2\phi_\tau(\overline{x})e_j
 \end{equation}
 and
 \begin{equation}\label{proposition:eq5}
     q_2(\xi)=\tau \sum_{i,j=1}^k e_i^*\nabla^2\phi_\tau(\overline{x})e_j\left(\sum_{\eta=\pm1}\eta \abs{\scal{\nabla\phi_\tau(\overline{x}),e_i+\eta e_j}}^2\cos(h\xi_i-\eta h\xi_j)\right).
 \end{equation}

We have now introduced all the notation that will be needed in this proof. The first step is the following reformulation of the proposition:

\smallskip

\noindent{\bf Step 1.} {\em The proposition follows from the fact that, for $c_0$ small enough and for all $\xi$,}
\begin{equation}\label{claim:1}
    p_s^2(\xi)+p_a^2(\xi)+c_0q(\xi)\gtrsim \abs{\xi}_E^4+\tau^2\abs{\xi}_E^2+\tau^4.
\end{equation}

\smallskip

\begin{proof}[Proof of Step 1] First note that
\begin{equation}\label{claim:1bis}
\abs{\xi}_E^4+\tau^2\abs{\xi}_E^2+\tau^4
\gtrsim \left(\sum_{j=1}^k h^{-4}\sin^4(h\xi_j)+\tau^2\sum_{j=1}^k h^{-2}\sin^2(h\xi_j)+\tau^4 \right) .
\end{equation}

Computing the norm we get 
    \begin{multline*}
    \norm{\overline{S}_{\phi_\tau} f}^2+\norm{\overline{A}_{\phi_\tau} f}^2+c_0\tau \sum_{x\in(h\mathcal{V})} \sum_{i,j=1}^k\overline{\mathbf{C}}_{i,j}^{f,f}\\
    = \norm{p_s\hat f}^2+\norm{p_s\hat f}^2+c_0\scal{q(\xi)\hat{f},\hat{f}} \\
    =h^d\int_{[0,h^{-1}]^d} \left(p_s(\xi)^2+p_a(\xi)^2+c_0q(\xi)\right)|\hat{f}(\xi)|^2d\xi.
    \end{multline*}
    So, if \eqref{claim:1} is established, from \eqref{claim:1bis} we obtain
    \begin{align*}
        \norm{\overline{S}_{\phi_\tau} f}^2+&\norm{\overline{A}_{\phi_\tau} f}^2+c_0\tau \sum_{x\in(h\mathcal{V})} \sum_{i,j=1}^k\overline{\mathbf{C}}_{i,j}^{f,f} \\
        \gtrsim& h^d\tau^4\int_{[0,h^{-1}]^d} |\hat{f}(\xi)|^2d\xi +h^d\tau^2 \sum_{j=1}^k\int_{[0,h^{-1}]^d} h^{-2}\sin^2(h\xi_j)|\hat{f}(\xi)|^2d\xi \\
        &+h^d \sum_{j=1}^k\int_{[0,h^{-1}]^d} h^{-4}\sin^4(h\xi_j)|\hat{f}(\xi)|^2d\xi.
    \end{align*}
Finally by Parseval's identity the right hand side is
$$
    =\left(\tau^4\norm{f}^2+\tau^2h^{-2}\sum_{j=1}^k\norm{D_j^sf}^2+h^{-4}\sum_{j=1}^k\norm{(D_j^s)^2f}^2\right)
$$
and \eqref{eqproposition:3.5} follows.
\end{proof}

In order to prove \eqref{claim:1}, we will consider two cases depending on the value of $\abs{\xi}_E$.

\smallskip

\noindent{\bf Step 2.} {\em Proof of \eqref{claim:1} when $\abs{\xi}_E\geq \kappa\tau$. Here $\kappa$ is a constant that will be fixed in \eqref{eq:defkappa} below.}

\smallskip

Let us first bound $p_s^2$ from below. To do so, note that by our initial scaling of $h\Gamma$, see \eqref{scaling:1}, we have
$$
h \xi_j\leq 2\pi\sqrt{d}\|e_j\|_2 \|h\xi\|_\infty \leq c < \frac{\pi}{2}.
$$
Then since for $t\in[-c,c]$,
$$
\abs{\cos t -1}\gtrsim\abs{t}^2,
$$
we obtain that 
\begin{align*}
   \abs{\sum_{j=1}^k\frac{\cos(h\xi_j)-1}{h^2}}\gtrsim \frac{1}{h^2}\sum_{j=1}^k\abs{\xi_jh}^2\gtrsim \sum_{j=1}^k \abs{\xi_j}^2 \gtrsim \abs{\xi}_E^2.
\end{align*}
On the other hand
\begin{equation*}
    \abs{\sum_{j=1}^k \scal{\nabla\phi_\tau(\overline{x}),e_j}^2\cos(h\xi_j)} \lesssim \tau^2.
\end{equation*}
Combining the two, we get
\begin{align}
    p_s(\xi)^2 &\geq  \abs{2\sum_{j=1}^k\frac{\cos(h\xi_j)-1}{h^2}}^2-  \abs{\sum_{j=1}^k \scal{\nabla\phi_\tau(\overline{x}),e_j}^2\cos(h\xi_j)}^2 \notag \\
    &\geq C_{1}^2\abs{\xi}_E^4-C_2^2\tau^4 \gtrsim  \abs{\xi}_E^4. \notag 
\end{align}
Thus, if we chose 
\begin{equation}
    \label{eq:defkappa}
\kappa=\sqrt{2C_2/C_1}
\end{equation}
then, for $\abs{\xi}_E\geq \kappa \tau$,
\begin{align}
    p_s(\xi)^2
    &\geq C_{HF}\left(\abs{\xi}_E^4+\tau^2\abs{\xi}_E^2+\tau^4\right)  \label{proposition:eq6} 
\end{align}
where $C_{HF}>0$ is a constant independent of $\xi$. 

Also note that $c_0\tau q_1(\xi)$ from \eqref{proposition:eq1} is bounded, using Cauchy-Schwarz, by
\begin{multline*}
        4c_0\tau\sum_{i=1}^k \abs{h^{-1}\sin(h\xi_i)}\sum_{j=1}^k \abs{h^{-1}\sin(h\xi_j)}\abs{e_i^*\nabla^2\phi_\tau(\overline{x})e_j}\\
        \lesssim  c_0 \sum_{j=1}^k \tau^2 h^{-2}\sin^2(h\xi_j),  
\end{multline*}
while we can bound $c_0q_2(\xi)$ \eqref{proposition:eq5} by
\begin{equation*}
c_0\tau\sum_{i,j=1}^k \abs{e_i^*\nabla^2\phi_\tau(\overline{x})e_j} \sum_{\eta=\pm1}\abs{\scal{\nabla\phi_\tau(\overline{x}),e_i+\eta e_j}}^2 \lesssim c_0 \tau^4.
\end{equation*}

Hence by choosing $c_0$ to be small enough we ensure that the contribution of $c_0q(\xi)$ gets absorbed by the second and third term in \eqref{proposition:eq6}. In this case we discard $p_a^2(\xi)>0$.
The constraint $c_0\tau_0>1$ will then force us to take $\tau_1=1/c_0$ so that $\tau_0$ has only to be large enough.

\noindent{\bf Step 3.} {\em Proof of \eqref{claim:1} when $\abs{\xi}_E\leq \kappa\tau$ where $\kappa$ is the constant fixed in\eqref{eq:defkappa} above.}
\smallskip

In this region, we expand the symbols in $h\xi_j$ (noting that $h\abs{\xi_E}\leq \kappa\delta_0$ which is small for $\delta_0$ small) in a similar fashion as in Lemma \ref{lemma:3.3}. 
First for the symmetric part, using that 
$$
|\cos(h\xi_j)-1|\lesssim h^2|\xi_j|^2,
$$
and
$$
|\cos(h\xi_j)-1+\frac{h^2}{2}\xi_j^2| \lesssim h^4|\xi_j|^4.
$$
We obtain that 
\begin{align*}
    \bigl|p_{s,j}+\xi_j^2&-\scal{\nabla\phi_\tau(\overline{x}),e_j}^2\bigr| \\
    &=\abs{2h^{-2}\left(\cos\left(h\xi_j\right)-1+\frac{h^2\xi_j^2}{2}\right)+\scal{\nabla\phi_\tau(\overline{x}),e_j}^2\left(\cos\left(h\xi_j\right)-1\right)}  \\
    &\lesssim h^2|\xi_j|^4+\tau^2h^2|\xi_j|^2,
\end{align*}
by using the definition of $\phi_\tau$ (see Lemma \ref{lemma:3.1}).
Finally, using the equivalence of norms, we infer that 
\begin{align*}
    \abs{p_s(\xi)-\left(\abs{\nabla_E\phi_\tau(\overline{x})}^2-\abs{\xi_E}^2\right)}&=\abs{\sum_{j=1}^k p_{s,j}+\xi_j^2-\scal{\nabla\phi_\tau(\overline{x}),e_j}^2}\\
    &\lesssim \sum_{j=1}^k \left(h^{2}\abs{\xi_j}^4+\tau^2h^2\abs{\xi_j}^2\right) \\
    &\lesssim h^2\abs{\xi}_E^4+\tau^2h^2\abs{\xi}_E^2\lesssim h^2\tau^4 \lesssim \delta_0^2\tau^2.
\end{align*}

For the anti-symmetric part, using that
$$
\left|\sin(h\xi_j)-h\xi_j\right|\lesssim h^3|\xi_j|^3,
$$
we get 
\begin{align*}
\abs{p_{a,j}(\xi)-2\scal{\nabla\phi_\tau(\overline{x}),e_j}\xi_j}&=\abs{2h^{-1}\scal{\nabla\phi_\tau(\overline{x}),e_j}(\sin\left(h\xi_j\right)-h\xi_j)}\\
&\lesssim \tau h^2 |\xi_j|^3.
\end{align*}
Hence, after summing over all $j$'s and using the equivalence of norms, we get
\begin{align*}
    \abs{p_a(\xi)-2\scal{\nabla_E\phi_\tau(\overline{x}),\xi_E}_{\R^k}}&\lesssim \tau \sum_{j=1}^kh^2\abs{\xi_j}^3 \\
    &\lesssim \tau h^2 \abs{\xi}_E^3 \lesssim h^2\tau^4 \lesssim \delta_0^2\tau^2.
\end{align*}

We may thus write
$$
p_s(\xi)=\abs{\nabla_E\phi_\tau(\overline{x})}^2-\abs{\xi_E}^2+E_1, \quad \mbox{and} \quad p_a(\xi)=2\scal{\nabla_E\phi_\tau(\overline{x}),\xi_E}_{\R^k}+E_2,
$$
with $|E_1|,|E_2|\lesssim \delta_0^2\tau^2$. As $\abs{\nabla_E\phi_\tau(\overline{x})}$, $\abs{\xi_E}\lesssim\tau$, thus 
$\abs{\scal{\nabla_E\phi_\tau(\overline{x}),\xi_E}_{\R^k}}\lesssim\tau^2$,  it follows that 
\begin{equation}\label{proposition:eq7}
p_s^2(\xi)+p_a^2(\xi)=\left(\abs{\nabla_E\phi_\tau(\overline{x})}^2-\abs{\xi}_E^2\right)^2+4\scal{\nabla_E\phi_\tau(\overline{x}),\xi_E}_{\R^k}^2+E_3,
\end{equation}
with $|E_3|\lesssim\delta_0^4\tau^4+\delta_0^2\tau^4\lesssim \delta_0^2\tau^4$.

Now let
\begin{equation*}
    \mathcal{S}_\tau:=\left\{\xi\,:\ \abs{\xi}_E^2=\abs{\nabla_E\phi_\tau(\overline{x})}^2\right\}
    \quad,\quad \mathcal{F}:=\left\{\xi\,:\ \scal{\nabla_E\phi_\tau(\overline{x}),\xi_E}_{\R^k}=0\right\}
\end{equation*}
and
\begin{equation*}
\mathcal{C}_\tau=\mathcal{S}_\tau\cap\mathcal{F}
\end{equation*}
denote the joint characteristic sets of the symmetric and antisymmetric parts of the operator. Further define
\begin{equation*}
    \mathcal{N}_{\tau,\mathcal{C}}:=\left\{\xi\in \mathbb{T}^d_h:\mbox{dist}(\xi_E,\mathcal{C}_\tau)<\gamma_0\tau\right\}
\end{equation*}
to be a $\gamma_0\tau$ neighbourhood of the joint characteristic set $\mathcal{C}_\tau$ with $\gamma_0>0$ small.
In order to prove \eqref{claim:1}, we split the study into two cases depending on $\xi$.

\smallskip

\noindent{\bf First Case.} {\sl We assume that $\xi$ is outside of $\mathcal{N}_{\tau,\mathcal{C}}$.}

\smallskip

First note that it is enough to prove that
\begin{equation}\label{proposition:eq2}
    p_s^2(\xi)+p_a^2(\xi)\gtrsim \tau^4
\end{equation}
since $\tau^4 \gtrsim(\tau^4+\tau^2\abs{\xi}_E^2+\abs{\xi}_E^4)$ under this hypothesis $\abs{\xi}_E\lesssim\tau$.
We then deduce \eqref{claim:1}  by absorbing $c_0q(\xi)$ into $p_s^2(\xi)+p_a^2(\xi)$ when $c_0$ is small enough (thus $\tau_1$ large enough), in the same manner as at the end of Step 2.

We now prove \eqref{proposition:eq2} with the help of \eqref{proposition:eq7}.

Take $\xi_{\mathcal{C}}\in\mathcal{C}_\tau$ to be the closest point to $\xi$ in the charasterictic set, for which $|\xi-\xi_\mathcal{C}|\geq \gamma_0\tau$. In other words,
$$
\xi_{\mathcal{C}}=\arg\min_{\tilde\xi\in\mathcal{C}_\tau}|\xi-\tilde\xi|.
$$
Now we estimate the leading order approximation in \eqref{proposition:eq7}. As $\xi_{\mathcal{C}}\in\mathcal{C}_\tau$, we have
\begin{align*}
    \left|\abs{\nabla_E\phi_\tau(\overline{x})}^2-\abs{\xi}_E^2\right|^2+4\scal{\nabla_E\phi_\tau(\overline{x}),\xi_E}^2
    = \bigl( \abs{\xi_\mathcal{C}}_E^2-\abs{\xi}_E^2\bigr)^2+4\scal{\nabla_E\phi_\tau(\overline{x}),\xi_E}^2.
\end{align*}

By Lemma \ref{lemma:3.1}, we have that there exist $a,b>0$ such that 
$$
a\tau\leq\abs{\nabla_E\phi_\tau(\overline{x})}\leq b\tau.
$$ 
Let $\rho\in[a,b]$ be such that 
$$
\abs{\nabla_E\phi_\tau(\overline{x})}=\rho\tau. 
$$

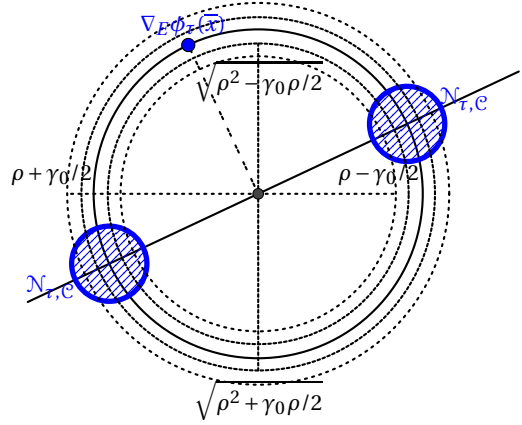
\begin{figure}
    \centering
\begin{tikzpicture}[line cap=round,line join=round,>=triangle 45,x=0.5cm,y=0.5cm]
\clip(-7,-6) rectangle (8,7);
\draw [line width=0.8pt] (0.,0.) circle (2.187532856896097cm);
\draw [line width=0.8pt,domain=-6.127770099493457:6.92849824037501] plot(\x,{(-0.--1.86*\x)/3.96});
\draw [line width=2.pt,color=qqqqff,fill=qqqqff,pattern=north east lines,pattern color=qqqqff] (-3.96,-1.86) circle (0.5cm);
\draw [line width=2.pt,color=qqqqff,fill=qqqqff,pattern=north east lines,pattern color=qqqqff] (3.96,1.86) circle (0.5cm);
\draw [line width=0.8pt,dash pattern=on 1pt off 1pt] (0.,0.) circle (2.cm);
\draw [line width=0.8pt,dash pattern=on 1pt off 1pt] (0.,0.) circle (2.35cm);
\draw [line width=0.8pt,dotted] (0.,0.) circle (1.83cm);
\draw [line width=0.8pt,dotted] (0.,0.) circle (2.539547203735343cm);
\draw [line width=0.8pt,dash pattern=on 1pt off 1pt on 1pt off 4pt] (-1.86,3.96)-- (0.,0.);
\draw [line width=0.8pt,dash pattern=on 1pt off 1pt] (0.,0.) -- (0,4);
\draw [line width=0.8pt,dash pattern=on 1pt off 1pt] (0.,0.) -- (0,-4.7);
\draw [line width=0.8pt,dotted] (0.,0.) -- (3.65,0);
\draw [line width=0.8pt,dotted] (0.,0.) -- (-5.08,0);
\begin{scriptsize}
\draw [fill=uuuuuu] (0.,0.) circle (2.0pt);
\draw[color=black] (-10.68491193814343,-4.7116180503326);
\draw [fill=qqqqff] (-1.86,3.96) circle (2.5pt);
\node at (0,3) {$\sqrt{\rho^2-\gamma_0\rho/2}$};
\node at (0,-5.5) {$\sqrt{\rho^2+\gamma_0\rho/2}$}; 
\node at (3.2,0.5) {$\rho-\gamma_0/2$};
\node at (-5.5,0.5) {$\rho+\gamma_0/2$};
\node[color=qqqqff] at (-5.5,-2.5) {$\mathcal{N}_{\tau,\mathcal{C}}$};
\node[color=qqqqff] at (5.5,2.5)  {$\mathcal{N}_{\tau,\mathcal{C}}$};
\node[color=qqqqff] at (-2,4.5)  {$\nabla_E\phi_\tau(\overline{x})$};
\end{scriptsize}
\end{tikzpicture}
\caption{Sketch of $\mathcal{N}_{\tau,\mathcal{C}}$ (hatched area) in dimension 2 where $\tau=1$.}
\end{figure}

Assume first that $\mathrm{dist}(\xi_E,\mathcal{S}_\tau)<\gamma_0\tau/2$, that is
$(\rho-\gamma_0/2)\tau< \abs{\xi}_E< (\rho+\gamma_0/2)\tau$, so that $\xi \notin \mathcal{N}_{\tau,\mathcal{C}}$ implies that
$\xi\notin \mathcal{F}$, that is $\scal{\nabla_E\phi_\tau(\overline{x}),\xi_E}_{\R^k}\not=0$. Then
$$
\abs{\scal{\nabla_E\phi_\tau(\overline{x}),\xi_E}_{\R^k}}^2=\abs{\nabla_E\phi_\tau(\overline{x})}^2 \ \mbox{dist}(\xi_E,\mathcal{F})^2  \gtrsim \gamma_0^2\tau^4
$$
where the implied constant depends on $\rho$ and can thus be chosen as an absolute constant.

On the other hand, if $\mathrm{dist}(\xi_E,\mathcal{S}_\tau)\geq \gamma_0\tau/2$. We have two cases:

-- Either $|\xi|_E\leq (\rho-\gamma_0/2)\tau$, but then
$$
|\xi|_E^2\leq (\rho^2-\gamma_0\rho+\gamma_0^2/4)\tau^2\leq (\rho^2-\gamma_0\rho/2)\tau^2=|\xi_{\mathcal{C}}|_E^2-\frac{\gamma_0\rho}{2}\tau^2,
$$
provided $\gamma_0\leq 2a\leq 2\rho$. 

-- Otherwise $|\xi|_E\geq (\rho+\gamma_0/2)\tau$ and then
$$
|\xi|_E^2\geq (\rho^2+\gamma_0\rho+\gamma_0^2/4)\tau^2\geq (\rho^2+\gamma_0\rho/2)\tau^2=|\xi_{\mathcal{C}}|_E^2+\frac{\gamma_0\rho}{2}\tau^2.
$$
In summary, we now have
$$
\left(\abs{\nabla_E\phi_\tau(\overline{x})}^2-\abs{\xi}_E^2\right)^2=\left(\abs{\xi_\mathcal{C}}^2-\abs{\xi}_E^2\right)^2 \geq \rho^2\gamma_0^2\tau^4/4
\gtrsim \gamma_0^2\tau^4.
$$

Combining the above yields that 
$$
\left(\abs{\nabla_E\phi_\tau(\overline{x})}^2-\abs{\xi}_E^2\right)^2+4\scal{\nabla_E\phi_\tau(\overline{x}),\xi_E}_{\R^k}^2  \gtrsim \gamma_0^2\tau^4.
$$
Therefore, \eqref{proposition:eq7} gives
$$ 
p_s^2(\xi)+p_a^2(\xi) \geq C_1\gamma_0^2\tau^4-C_2\delta_0^2\tau^4 \gtrsim \tau^4,
$$
provided $\delta_0$ has been chosen small enough with respect to $\gamma_0$.

\smallskip

\noindent{\bf Second Case.} {\sl We assume that $\xi\in\mathcal{N}_{\tau,\mathcal{C}}$.}

\smallskip

Here we will use that $|e_i^*\nabla^2\phi_\tau(\overline{x})e_j|\lesssim\tau$, $|\xi_j|\lesssim\tau$,
$$
|\cos(h\xi_j)-1|\lesssim |h\xi_j|^2\lesssim h^2\tau^2,
\quad\mbox{and}\quad
\left|\sin(h\xi_j)-h\xi_j\right|\lesssim h^3|\xi_j|^3\lesssim h^3\tau^3.
$$

In order to prove \eqref{claim:1}, we use that $p_s^2(\xi)+p_a^2(\xi)\geq 0$ and show that the inequality is already valid for $q$ in this case.
Recall that $q(\xi)=q_1(\xi)+q_2(\xi)$ where $q_1,q_2$ are defined in \eqref{proposition:eq1} and \eqref{proposition:eq5} respectively.
On the one hand, 
writing $\sin(h\xi_i)=\sin(h\xi_i)-h\xi_i+h\xi_i$
and the same for $\sin(h\xi_j)$,
we write the first one as
\begin{eqnarray*}
    q_1(\xi)&=&
    4\tau h^{-2}\sum_{i,j=1}^k\bigl(\sin(h\xi_i)\bigr)\bigl(\sin(h\xi_j)\bigr)e_i^*\nabla^2\phi_\tau(\overline{x})e_j\\
    &=&\sum_{i,j}^k\left(e_i^*\nabla^2\phi_\tau(\overline{x})e_j\right)\xi_i\xi_j\\
    &&+4\tau h^{-2}\sum_{i,j=1}^k\bigl(\sin(h\xi_i)-h\xi_i\bigr)\bigl(\sin(h\xi_j)-h\xi_j\bigr)e_i^*\nabla^2\phi_\tau(\overline{x})e_j\\
    &&+4\tau h^{-1}\sum_{i,j=1}^k\xi_i\bigl(\sin(h\xi_j)-h\xi_j\bigr)e_i^*\nabla^2\phi_\tau(\overline{x})e_j\\
    &&+4\tau h^{-1}\sum_{i,j=1}^k\bigl(\sin(h\xi_i)-h\xi_i\bigr)\xi_je_i^*\nabla^2\phi_\tau(\overline{x})e_j\\
    &=&4\tau\sum_{i,j}^k\left(e_i^*\nabla^2\phi_\tau(\overline{x})e_j\right)\xi_i\xi_j+ \eps_1=4\tau\xi_E^*\nabla^2_E\phi_\tau(\overline{x})\xi_E+ \eps_1
\end{eqnarray*}
with $\eps_1\lesssim h^4\tau^8+h^2\tau^6\lesssim (\delta_0^4+\delta_0^2)\tau^4\lesssim \delta_0^2\tau^4$. Note that we are in the region where $|\xi|_E\lesssim\tau$.

On the other hand, recall \eqref{proposition:eq5}: 
$$
   q_2(\xi)=\tau \sum_{i,j=1}^k e_i^*\nabla^2\phi_\tau(\overline{x})e_j\left(\sum_{\eta=\pm1}\eta \abs{\scal{\nabla\phi_\tau(\overline{x}),e_i+\eta e_j}}^2\cos(h\xi_i-\eta h\xi_j)\right).\
$$
We first expand $\abs{\scal{\nabla\phi_\tau(\overline{x}),e_i+\eta e_j}}^2$ and rewrite the sum in $\eta$ as
$$
\begin{aligned}
&\bigl(\abs{\scal{\nabla\phi_\tau(\overline{x}),e_i}}^2+\abs{\scal{\nabla\phi_\tau(\overline{x}),e_j}}^2\bigr)\bigl(\cos(h\xi_i-h\xi_j)-\cos(h\xi_i+ h\xi_j)\bigr)\\
&\qquad+2\scal{\nabla\phi_\tau(\overline{x}),e_i}\scal{\nabla\phi_\tau(\overline{x}),e_j}\bigl(\cos(h\xi_i-h\xi_j)+\cos(h\xi_i+ h\xi_j)\bigr)\\
=&2\bigl(\abs{\scal{\nabla\phi_\tau(\overline{x}),e_i}}^2+\abs{\scal{\nabla\phi_\tau(\overline{x}),e_j}}^2\bigr)\sin h\xi_i\sin h\xi_j\\
&\qquad+4\scal{\nabla\phi_\tau(\overline{x}),e_i}\scal{\nabla\phi_\tau(\overline{x}),e_j}\cos h\xi_i\cos h\xi_j\\
=&4\scal{\nabla\phi_\tau(\overline{x}),e_i}\scal{\nabla\phi_\tau(\overline{x}),e_j}+O(h^2\tau^4).
\end{aligned}
$$
Summing those identities, we finally obtain
\begin{eqnarray*}
 q_2(\xi)&=&4\tau\sum_{i,j=1}^k \left(e_i^*\nabla^2\phi_\tau(\overline{x})e_j\right)\scal{\nabla\phi_\tau(\overline{x}),e_i}\scal{\nabla\phi_\tau(\overline{x}),e_j}+\eps_2\\
 &=&4\tau\nabla_E\phi_\tau(\overline{x})^*\nabla^2_E\phi_\tau(\overline{x})\nabla_E\phi(\overline{x})+\eps_2
\end{eqnarray*}
with $|\eps_2|\lesssim h^2\tau^6 \lesssim\delta_0^2\tau^4$.

In summary, we obtain that
\begin{equation*}
    q(\xi)=4\tau\left(\nabla_E\phi_\tau(\overline{x})^*\nabla^2_E\phi_\tau(\overline{x})\nabla_E\phi(\overline{x})+ \xi_E^*\nabla^2_E\phi_\tau(\overline{x})\xi_E\right) + \eps_3
\end{equation*}
with $|\eps_3|\lesssim \delta_0^2\tau^4$.

Now by applying Lemma \ref{lemma:3.1}, we infer that for $\overline{x}\in B_2\mbox{\textbackslash}B_{1/2}$ and for $\xi$ in the neighborhood of the characteristic set, we get that for $\delta_0$ small enough, there exist a constant $c_{cf}>0$ such that
\begin{equation*}
    q(\xi)\geq C\tau^4 \geq c_{cf}(\tau^4+\tau^2\abs{\xi}_E^2+\abs{\xi}_E^4),
\end{equation*}
finishing the proof.

\end{proof}

We are now ready to prove our Carleman estimate.
\begin{proof}[Proof of Theorem \ref{Carleman:1}]
Recall that we want to prove \eqref{carleman:eq1}:
   \begin{multline*}
        \tau^3 \norm{e^{ \phi_\tau} u}^2_{\ell^2(h\mathcal{V})} + \tau \norm{e^{\phi_\tau} h^{-1}D_s u}^2_{\ell^2(h\mathcal{V})} + \tau^{-1} \norm{e^{ \phi_\tau} h^{-2}D^2_s u}^2_{\ell^2(h\mathcal{V})} \\
        \leq C \norm{e^{\phi_\tau} g}^2_{\ell^2(h\mathcal{V})}.
    \end{multline*}
We first reformulate this in terms of $f=e^{\phi_\tau} u$ and 
$$
L_{\phi_\tau} f(x):= h^{-2}e^{\phi_\tau}\Delta_{h\Gamma} e^{-\phi_\tau}f(x). 
$$

\smallskip
    
\noindent{\bf Step 1.} {\em It is enough to prove that}
\begin{equation}\label{estimates:1}
        \tau^3\norm{f}^2+\tau\norm{h^{-1}D_sf}^2+\tau^{-1}\norm{h^{-2}D^2_sf}^2 \leq C'\norm{L_{\phi_\tau} f}^2.
    \end{equation}
    
\smallskip

Note that \eqref{estimates:1} is equivalent to 
\begin{equation}\label{estimates:1bis}
        \tau^{3/2}\norm{f}+\tau^{1/2}\norm{h^{-1}D_sf}+\tau^{-1/2}\norm{h^{-2}D^2_sf} \lesssim\norm{L_{\phi_\tau} f}
    \end{equation}
with a different constant.
   
\smallskip

Let $x\in B_2\setminus B_{1/2}$. Write
\begin{eqnarray*}
  e^{\phi_\tau(x)}D_j^su(x)&=&e^{\phi_\tau(x)}u(x+he_j)-e^{\phi_\tau(x)}u(x-he_j)\\
  &=&e^{\phi_\tau(x)-\phi_\tau(x+he_j)} f(x+he_j)-e^{\phi_\tau(x)-\phi_\tau(x-he_j)} f(x-he_j)\\
  &=&D_j^sf(x)+\bigl(e^{\phi_\tau(x)-\phi_\tau(x+he_j)}-1\bigr)f(x+he_j)\\
  &&-\bigl(e^{\phi_\tau(x)-\phi_\tau(x-he_j)}-1\bigr)f(x-he_j).
\end{eqnarray*}
Now note that 
\begin{align*}
|\phi_\tau(x)-&\phi_\tau(x+he_j)|= \tau \abs{\ffi(|x+he_j|_\Gamma)-\ffi(|x|_\Gamma)}\\
&\leq \tau\bigl||x+he_j|_\Gamma-|x|_\Gamma\bigr| \sup\{|\ffi'(t)|\,:\ t\in[|x|_\Gamma,|x+he_j|_\Gamma]\}.
\end{align*}
Note that, on the one hand $\bigl||x+he_j|_\Gamma-|x|_\Gamma\bigr|\leq |h|\|e_i\|_\Gamma\lesssim h$ and on the other hand, there are $0<a<b<+\infty$
such that, if $x\in B_2\setminus B_{1/2}$ then $[|x|_\Gamma,|x+he_j|_\Gamma]\subset[a,b]$. But then, as $\ffi$ is of class $\cc^1$
on $(0,+\infty)$, we obtain that 
$$
\sup\{|\ffi'(t)|\,:\ t\in[|x|_\Gamma,|x+he_j|_\Gamma]\}\lesssim 1.
$$
It follows that
$$
|\phi_\tau(x)-\phi_\tau(x+he_j)|\lesssim h\tau.
$$
Further $h\tau\leq \delta_0\leq 1$ so that there is some $A>0$ such that $|\phi_\tau(x)-\phi_\tau(x+he_j)|\leq A$.
As $|e^u-1|\leq e^B|u|$ if $|u|\leq B$ we obtain that
$$
|e^{\phi_\tau(x)-\phi_\tau(x+he_j)}-1|\lesssim h\tau.
$$
We can argue the same way for $|e^{\phi_\tau(x)-\phi_\tau(x-he_j)}-1|$. Summing over $j$ and taking norms in $x$, we can thus
absorb the two corresponding terms into $\tau^3\norm{f}^2$ by taking a larger constant in the right hand side of \eqref{estimates:1}.
A similar argument allows also to absorb the extra terms comming from $ \norm{e^{ \phi_\tau} h^{-2}D^2_s u}^2_{\ell^2(h\mathcal{V})}$
into the two first terms.

\smallskip
    
\noindent{\bf Step 2.} {\em Localization.}

\smallskip

We claim that it suffices to show the estimate \eqref{estimates:1bis} for the functions $f_n$ of Lemma \ref{lemma:3.2}, provided the parameter $\lambda$ is chosen properly.
Indeed, if the estimate is proven for $f_n$, as $f=\sum f_n$ we would get 
$$\begin{aligned}
    \tau^\frac{3}{2}\norm{f}+&\tau^\frac{1}{2}\norm{h^{-1}D_sf}+\tau^\frac{-1}{2}\norm{h^{-2}D^2_sf}\\
    \leq& \sum_n\bigl(\tau^\frac{3}{2}\norm{f_n}+\tau^\frac{1}{2}\norm{h^{-1}D_sf_n}+\tau^\frac{-1}{2}\norm{h^{-2}D^2_sf_n}
    \lesssim\sum_{n}\norm{L_{\phi_\tau} f_n} \\
    \leq& C_{\text{loc}}\norm{L_{\phi_\tau} f} + C_{loc}\tau^\frac{1}{2}\lambda^{-1}\sum_{j=1}^k\norm{h^{-1}D_j^s f}\\
    &+  C_{\text{loc}}\left(\tau\lambda^{-2}+\tau^\frac{3}{2}\lambda^{-1}+\tau^{\frac{5}{2}}h\lambda^{-1}\right)\norm{f},
 \end{aligned}$$
for some constant $C_{\text{loc}}\geq 1$ coming from Lemma \ref{lemma:3.2}.

Now choosing $\lambda=10\sqrt{k} C_{\text{loc}}$
and recalling that $\tau h\leq \delta_0$ for some $\delta_0\in(0,1)$, we end up with
\begin{align*}
     \tau^\frac{3}{2}\norm{f}+\tau^\frac{1}{2}&\norm{h^{-1}D_sf}+\tau^\frac{-1}{2}\norm{h^{-2}D^2_sf} \\ 
     \leq& C\norm{L_{\phi_\tau} f} + \frac{1}{10\sqrt{k}}\tau^\frac{1}{2}\sum_{j=1}^k\norm{h^{-1}D_j^s f}
    + \frac{3}{10}\tau^\frac{3}{2}\norm{f}\\
    \leq& C\norm{L_{\phi_\tau} f} + \frac{1}{10}\tau^\frac{1}{2}\norm{h^{-1}D_sf}+ \frac{3}{10}\tau^\frac{3}{2}\norm{f}
\end{align*}
The two last terms are then absorbed into the left hand side to obtain the desired estimate for $f$.

\smallskip

\noindent{\bf Step 3.} {\em Proving the Theorem for localized functions.}

\smallskip

 We seek to prove \eqref{estimates:1} for $f_n=f\psi_n$ with $\supp{f}\subset B_2\text{\textbackslash}B_{\frac{1}{2}}$, $\psi_n$ as in Lemma \ref{lemma:3.2}. From Proposition \ref{proposition:3.5}, there is a $c_0\in(0,1)$ and $\tau_0>1$
with $\tau_0c_0\geq1$. We first write
\begin{align*}
    \tau\norm{L_{\phi_\tau} f_n}^2&=\tau\norm{S_{\phi_\tau} f_n}^2+\tau\norm{A_{\phi_\tau} f_n}^2+\tau \scal{[S_{\phi_\tau},A_{\phi_\tau}]f_n,f_n} \\
    &\geq \norm{S_{\phi_\tau} f_n}^2+\norm{A_{\phi_\tau} f_n}^2+\tau c_0\scal{[S_{\phi_\tau},A_{\phi_\tau}]f_n,f_n}
\end{align*}
where we use that if $\tau c_0\geq 1$, then, for every $a,b\geq0$ and every $\alpha\in[-ab,ab]$
$$
\tau a^2+\tau b^2+2\tau\alpha\geq a^2+b^2+2\tau c_0\alpha.
$$
Using Lemma \ref{lemma:3.3} and taking into account that $0<\delta_0<1<\tau$, it follows that
$$
\tau\norm{L_{\phi_\tau} f_n}^2
\geq \norm{\Tilde{S}_{\phi_\tau} f_n}^2+\norm{\Tilde{A}_{\phi_\tau} f_n}^2+\tau c_0\sum_{x\in(h\mathcal{V})} \sum_{i,j=1}^k\mathbf{C}_{i,j}^{f_n,f_n}-E_1,
$$
with
\begin{equation}
        0\leq E_1\leq C\left(\delta_0^2\tau^4+\tau^3\right)\norm{f_n}^2+C\tau\sum_{j=1}^k\norm{h^{-1}D_j^sf_n}^2.\\
\label{estimates:2}
\end{equation}
    Now exploiting the bound from Lemma \ref{lemma3:4} we get
\begin{align*}
\tau\norm{L_{\phi_\tau} f_n}^2\geq       
&\norm{\Tilde{S}_{\phi_\tau} f_k}^2+\norm{\Tilde{A}_{\phi_\tau} f_n}^2+\tau c_0\sum_{x\in(h\mathcal{V})} \sum_{i,j=1}^k\mathbf{C}_{i,j}^{f_n,f_n}-E_1 \\
        &\geq \norm{\overline{S}_{\phi_\tau} f_n}^2+\norm{\overline{A}_{\phi_\tau} f_n}^2+\tau c_0\sum_{x\in(h\mathcal{V})} \sum_{i,j=1}^k\overline{\mathbf{C}}_{i,j}^{f_n,f_n}-E_1-E_2,
\end{align*}
where
\begin{eqnarray*}
    0\leq E_2&\leq& C\left(\tau^3\lambda^2+\tau^\frac{7}{2}\lambda\right)\norm{f_n}^2+C\left(\tau\lambda
    ^2+\tau^\frac{3}{2}\lambda\right)\sum_{j=1}^k\norm{h^{-1}D_j^sf_n}^2\\
    &\leq&C\left(\tau^3+\tau^\frac{7}{2}\right)\norm{f_n}^2+C\left(\tau+\tau^\frac{3}{2}\right)\sum_{j=1}^k\norm{h^{-1}D_j^sf_n}^2,
\end{eqnarray*}
since $\lambda$ has been chosen in Step 2.

Finally, using Proposition \ref{proposition:3.5} we infer that
\begin{align*}
\tau&\norm{L_{\phi_\tau} f_n}^2\geq    
\norm{\overline{S}_{\phi_\tau} f_n}^2+\norm{\overline{A}_{\phi_\tau} f_n}^2+\tau c_0\sum_{x\in(h\mathcal{V})} \sum_{i,j=1}^k\overline{\mathbf{C}}_{i,j}^{f_n,f_n}-E_1-E_2 \\
    &\geq C_{\text{low}}\left(\tau^4\norm{f_n}^2+\tau^2h^{-2}\sum_{j=1}^k\norm{D_j^sf_n}^2+h^{-4}\sum_{j=1}^k\norm{(D_j^s)^2f_n}^2\right)-E_1-E_2. 
\end{align*}

Now, if we chose $\delta_0$ small enough and $\tau$ large enough, we have
\begin{align*}
0\leq E_1+E_2&\leq C\left(\delta_0^2+\tau^{-1}+\tau^{-\frac{1}{2}}\right)\tau^4\norm{f_n}^2\\
&\qquad+C\left(\tau^{-1}+\tau^{-\frac{1}{2}}\right)\tau^2 h^{-2}\sum_{j=1}^k\norm{D_j^sf_n}^2\\
\leq&\frac{C_{\text{low}}}{2}\left(\tau^4\norm{f_n}^2+\tau^2h^{-2}\sum_{j=1}^k\norm{D_j^sf_n}^2\right).
\end{align*}

We thus absorb the error terms and obtain
$$
\tau\norm{L_{\phi_\tau} f_n}^2\geq    
 C_{\text{low}}\left(\tau^4\norm{f_n}^2+\tau^2h^{-2}\sum_{j=1}^k\norm{D_j^sf_n}^2+h^{-4}\sum_{j=1}^k\norm{(D_j^s)^2f_n}^2\right)
$$
and dividing by $\tau>\tau_0$ finishes the proof.
\end{proof}
\section{The Three Balls Inequality}

In this section, we are still working with a 1-point periodic graph $h\Gamma$, for which we have the Carleman estimate \eqref{Carleman:1} proven in the previous section. We will establish the Three Balls Inequality
for Schr\"odinger operators on such graphs.
For future use, we will slightly enlarge the class of operators and,
instead of local potentials $V(x)f(x)$ consider a larger class:
Fix an integer $L$ and consider 
$$
\mathcal{P}_L=\left\{\sum_{j=1}^k\alpha_je_j\,:, \alpha_j\in\Z,\ \sum_{j=1}^k|\alpha_j|=L\right\}
$$
so that, for any $x,y\in h\mathcal{V}$
there is an $e\in\mathcal{P}_L$ such that $y=x+he$
if and only if there is a simple path of length at most $L$ between $x$ and $y$. For $L=0$, $\mathcal{P}_L=\{0\}$.

We consider operators of the form 
\begin{eqnarray}
    H_{h\Gamma}^L u(x)&=&  h^{-2}\Delta_{h\Gamma} u(x)+h^{-1}\sum_{j=1}^k B_j(x)D_s^ju(x)\notag\\
&&+\sum_{e\in\mathcal{P}_L} V_e(x+he)u(x+he)\label{eq:formhextrig}
\end{eqnarray}
with $\max(\norm{V_e}_{L^\infty},\norm{B_j}_{L^\infty})<+\infty$.

Note that when $L=0$ and $B_j=0$, $H_{h\Gamma}^0f(x)=h^{-2}\Delta_{h\Gamma} f(x)+V_0f(x)$ 
is a classical Schr\"odinger operator.

Our goal is to prove the following theorem.
\begin{theorem}[Three Balls Inequality]\label{TBT:4.2}
Let $\Gamma$ be a 1-point periodic graph $L\geq 0$
and $H_{h\Gamma}^L$ be an operator of the above form.
Then,
\begin{enumerate}
    \item there exist constants $h_0, \delta_0 \in (0,1)$, $c_1,c_2>0$ and $\tau_0>1$ such that for all $\tau\in(\tau_0,\delta_0h^{-1})$, $h\in(0,h_0)$ and $u:h\mathcal{V}\mapsto\R$ with $H^L_{h\Gamma}u=0$ on $B_4$ we have
    \begin{equation}\label{TCT:4.1eq1}
        \norm{u}_{\ell^2(B_1)} \leq C\left(e^{c_1\tau}\norm{u}_{\ell^2(B_{1/2})}+e^{-c_2\tau}\norm{u}_{\ell^2(B_2)}\right).
    \end{equation}
\item there exist $\alpha \in (0,1)$, $c_0>0$, $h_0\in(0,1)$ and $C>1$ such that for $h\in(0,h_0)$ and $u:h\mathcal{V}\to\R$ with $H^L_{h\Gamma} u=0$ in $B_4$ we have 
    \begin{equation}\label{TCT:4.2eq2}
        \norm{u}_{\ell^2(B_1)} \leq D\left(\norm{u}_{\ell^2(B_{1/2})}^\alpha\norm{u}_{\ell^2(B_2)}^{1-\alpha} + e^{-c_0h^{-1}}\norm{u}_{\ell^2(B_2)} \right).
    \end{equation}
    \end{enumerate}
\end{theorem}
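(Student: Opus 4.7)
\emph{Overall plan.} The plan is the classical Carleman-to-three-balls reduction: I prove (1) directly from the Carleman inequality \eqref{carleman:eq1bis} via a cutoff, commutator, and Caccioppoli argument, and then obtain (2) by optimising the parameter $\tau$ in (1). Fix $1/2<r_1<1<r_2<2$ and pick a smooth radial cutoff $\chi$ with $\chi\equiv 1$ on $B_{r_2}\setminus B_{r_1}$, $\chi\equiv 0$ outside $B_2\setminus B_{1/2}$, and with uniformly bounded discrete derivatives. Setting $v=\chi u$, the support of $v$ fits the hypothesis of Theorem~\ref{Carleman:1}, so \eqref{carleman:eq1bis} applies to $v$ with $g=h^{-2}\Delta_{h\Gamma}v$. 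Since $H_{h\Gamma}^L u=0$ on $B_4$, I substitute $h^{-2}\Delta_{h\Gamma} u=-h^{-1}\sum_j B_j D_s^j u-\sum_{e\in\mathcal{P}_L} V_e u(\cdot+he)$ into the decomposition $g=\chi h^{-2}\Delta_{h\Gamma}u+h^{-2}[\Delta_{h\Gamma},\chi]u$; this splits $\norm{e^{\phi_\tau}g}^2$ into a magnetic term (bounded by $\norm{B}_\infty^2\norm{e^{\phi_\tau}h^{-1}D_s v}^2$ after rewriting $\chi D_s^j u$ as $D_s^j v$ plus a transition-supported error), a potential term (bounded by $\norm{V}_\infty^2\norm{e^{\phi_\tau}v}^2$ after translating the summation index and using that $e^{\phi_\tau(x-he)}/e^{\phi_\tau(x)}$ is uniformly bounded thanks to $h\tau\le\delta_0$), and a commutator supported in the two transition annuli $T_{\text{in}}\subset B_{r_1}\setminus B_{1/2}$ and $T_{\text{out}}\subset B_2\setminus B_{r_2}$. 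For $\tau$ large enough relative to $\norm{B}_\infty$ and $\norm{V}_\infty$, the magnetic and potential contributions are absorbed into the LHS of \eqref{carleman:eq1bis} via the factors $\tau$ and $\tau^3$.

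\emph{Commutator, Caccioppoli and the conclusion of (1).} Pointwise, $h^{-2}[\Delta_{h\Gamma},\chi]u$ is an $O(1)$-coefficient linear combination of $u$ and $h^{-1}D_s u$ at vertices within $O(h)$ of a transition. A standard discrete Caccioppoli inequality (obtained by testing $H_{h\Gamma}^L u=0$ against $\eta^2 u$ for a suitable cutoff $\eta$) gives $\norm{h^{-1}D_s u}_{\ell^2(\Omega)}\lesssim\norm{u}_{\ell^2(\Omega')}$ whenever $\Omega\Subset\Omega'\subset B_4$; applied to enlargements of $T_{\text{in}}$ and $T_{\text{out}}$, it reduces the commutator's weighted norm to $u$-norms over $B_{1/2}$ and $B_2$ respectively. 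Since $\varphi'(t)=(c\arctan\log t-1)/t$ and $|c\arctan\log t|\le c\pi/2<1$, the function $\varphi$ is strictly decreasing, so $e^{\phi_\tau}\le e^{\tau\varphi(1/2)}$ on $T_{\text{in}}$, $e^{\phi_\tau}\le e^{\tau\varphi(r_2)}$ on $T_{\text{out}}$, and $e^{\phi_\tau}\ge e^{\tau\varphi(1)}$ on $B_1\cap\{\chi=1\}$. Combining the LHS lower bound $\tau^3 e^{2\tau\varphi(1)}\norm{u}_{\ell^2(B_1)}^2$ (up to absorbable annular corrections near $B_{1/2}$ and $B_2$) with the RHS upper bound and dividing by $e^{2\tau\varphi(1)}$ produces
\[
\norm{u}_{\ell^2(B_1)}^2 \le C e^{2\tau(\varphi(1/2)-\varphi(1))}\norm{u}_{\ell^2(B_{1/2})}^2 + C e^{-2\tau(\varphi(1)-\varphi(r_2))}\norm{u}_{\ell^2(B_2)}^2,
\]
which is \eqref{TCT:4.1eq1} with $c_1=\varphi(1/2)-\varphi(1)>0$ and $c_2=\varphi(1)-\varphi(r_2)>0$.

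\emph{Optimisation for (2) and main obstacle.} Given (1), I set $\tau^\star=(c_1+c_2)^{-1}\log(\norm{u}_{\ell^2(B_2)}/\norm{u}_{\ell^2(B_{1/2})})$ and distinguish three cases. When $\tau^\star\in(\tau_0,\delta_0h^{-1})$, substituting $\tau=\tau^\star$ yields $\norm{u}_{\ell^2(B_1)}\lesssim\norm{u}_{\ell^2(B_{1/2})}^\alpha\norm{u}_{\ell^2(B_2)}^{1-\alpha}$ with $\alpha=c_2/(c_1+c_2)\in(0,1)$. When $\tau^\star\le\tau_0$, the two norms are comparable and the interpolation is trivial. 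When $\tau^\star\ge\delta_0h^{-1}$, applying (1) at $\tau=\delta_0h^{-1}$ makes both terms bounded by $e^{-c_0h^{-1}}\norm{u}_{\ell^2(B_2)}$, giving the error term in \eqref{TCT:4.2eq2}. I expect the main technical obstacle to lie in correctly absorbing the magnetic and translated potential contributions into the LHS of \eqref{carleman:eq1bis}: rewriting $\chi D_s^j u$ in terms of $D_s^j v$ generates a transition-supported error that itself must be controlled by Caccioppoli, and, for $L\ge 1$, the translation of the weight $e^{\phi_\tau}$ across steps of length up to $Lh$ uses the constraint $h\tau\le\delta_0$ crucially.
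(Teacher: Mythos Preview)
Your overall strategy matches the paper's: apply the Carleman estimate to a cutoff of $u$, absorb the magnetic and nonlocal potential terms using the $\tau$ and $\tau^3$ factors on the left (the weight--shift bound $e^{\phi_\tau(x-he)}\lesssim e^{\phi_\tau(x)}$ from $h\tau\le\delta_0$ is exactly what the paper uses), control the transition pieces via Caccioppoli, and then run the standard three-case optimisation for part~(2).

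There is, however, a real gap in your cutoff choice for part~(1). Since $\chi$ is supported in $B_2\setminus B_{1/2}$, the inner transition annulus is $T_{\text{in}}\subset B_{r_1}\setminus B_{1/2}$ with $r_1>1/2$. Caccioppoli \emph{enlarges} the domain, so controlling $\|h^{-1}D_s u\|_{T_{\text{in}}}$ yields $\|u\|_{B_{r_1'}}$ for some $r_1'>r_1>1/2$, which is not bounded by $\|u\|_{B_{1/2}}$. Because the weight on $T_{\text{in}}$ is $e^{\tau\varphi(1/2)}>e^{\tau\varphi(1)}$, the resulting term $e^{2\tau\varphi(1/2)}\|u\|_{B_{r_1'}}^2$ cannot be absorbed into the left-hand side either. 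Your parenthetical ``absorbable annular corrections near $B_{1/2}$'' does not rescue this: the missing piece $\|u\|_{B_{r_1}\setminus B_{1/2}}^2$ carries no small $\tau$-dependent factor and is not dominated by $\|u\|_{B_{1/2}}^2$.

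The paper avoids this by letting the cutoff extend inward past $B_{1/2}$: it takes $\theta\equiv 1$ on roughly $B_{3/2}\setminus B_{1/3}$ and $\theta\equiv 0$ outside $B_{7/4}\setminus B_{1/4}$, so the inner transition lies in $B_{1/3}\setminus B_{1/4}$ and its Caccioppoli enlargement stays inside $B_{1/2}$. This requires applying the Carleman estimate on a slightly wider annulus than the $B_2\setminus B_{1/2}$ appearing in Theorem~\ref{Carleman:1}; the paper does this tacitly, and it is harmless since the proof works on any fixed annulus bounded away from the origin. With that cutoff, the left-hand side controls $\|u\|_{B_1\setminus B_{1/2}}$ directly, and one simply adds $\|u\|_{B_{1/2}}^2$ to both sides at the end.
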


The deduction of the Three Balls Inequality \eqref{TCT:4.2eq2} from \eqref{TCT:4.1eq1} is rather standard, we include
it here for sake of completeness:

\begin{proof}[Proof of $(1)$ implies $(2)$ in Theorem \ref{TBT:4.2}]
Let $h_0, \delta_0 \in (0,1)$, $c_1,c_2>0$ and $\tau_0>1$ such that for all $\tau\in(\tau_0,\delta_0h^{-1})$, $h\in(0,h_0)$
be given by Theorem \ref{TBT:4.2}. Let $\alpha\in(0,1)$ and $u:h\mathcal{V}\to\R$, with $H^L_{h\Gamma} u=0$ in $B_4$.
When $u=0$, \eqref{TCT:4.2eq2} is trivival so we assume $u\not=0$.
Take $\tau_*$ such that $e^{c_1\tau_*}\norm{u}_{\ell^2(B_{\frac{1}{2}})}=e^{-c_2\tau_*}\norm{u}_{\ell^2(B_2)}$.
Note that this implies that
\begin{equation*}
    e^{c_1\tau_*}\norm{u}_{\ell^2(B_{\frac{1}{2}})}=e^{-c_2\tau_*}\norm{u}_{\ell^2(B_2)}=\norm{u}_{\ell^2(B_{\frac{1}{2}})}^{\frac{c_2}{c_1+c_2}}\norm{u}_{\ell^2(B_2)}^{\frac{c_1}{c_1+c_2}}.
\end{equation*}
We now consider three different cases.
    \begin{enumerate}
        \item If $\tau_*\leq\tau_0$, then we have that
    \begin{eqnarray*}
        \norm{u}_{\ell^2(B_1)}&\leq& \norm{u}_{\ell^2(B_{2})}^\alpha\norm{u}_{\ell^2(B_2)}^{1-\alpha}
        =e^{(c_1+c_2)\alpha\tau_*}\norm{u}_{\ell^2(B_{1/2})}^\alpha\norm{u}_{\ell^2(B_2)}^{1-\alpha}\\
        &\leq& e^{(c_1+c_2)\alpha\tau_0}\norm{u}_{\ell^2(B_{1/2})}^\alpha\norm{u}_{\ell^2(B_2)}^{1-\alpha}
    \end{eqnarray*}
    which implies \eqref{TCT:4.2eq2} provided we take $D\geq e^{(c_1+c_2)\alpha\tau_0}$.
    
    \item If $\tau_*\in(\tau_0,\delta_0h^{-1})$, we write \eqref{TCT:4.1eq1} with $\tau=\tau_*$ and obtain
     \begin{eqnarray*} 
        \norm{u}_{\ell^2(B_1)} &\leq& C\left(e^{c_1\tau_*}\norm{u}_{\ell^2(B_{1/2})}+e^{-c_2\tau_*}\norm{u}_{\ell^2(B_2)}\right)\\
        &\leq&2C\norm{u}_{\ell^2(B_{\frac{1}{2}})}^{\frac{c_2}{c_1+c_2}}\norm{u}_{\ell^2(B_2)}^{\frac{c_1}{c_1+c_2}}
  \end{eqnarray*}
        which implies \eqref{TCT:4.2eq2} provided we take $D\geq 2C$.

    \item If $\tau_*>\delta_0h^{-1}$ we obtain that
\begin{equation*}
    e^{c_1h^{-1}\frac{\delta_0}{2}} \norm{u}_{\ell^2(B_\frac{1}{2})} \leq e^{c_1\tau_*} \norm{u}_{\ell^2(B_\frac{1}{2})}= e^{-c_2\tau_*}\norm{u}_{\ell^2(B_2)} \leq e^{-c_2h^{-1}\frac{\delta_0}{2}} \norm{u}_{\ell^2(B_2)}.
\end{equation*}
     Next, since we can assume that $\tau_0<h^{-1}\delta_0/2$ we can take $\tau=h^{-1}\dfrac{\delta_0}{2}$
     in \eqref{TCT:4.1eq1} to obtain
    \begin{align*}
         \norm{u}_{\ell^2(B_1)}&\leq C\left(e^{c_1h^{-1}\frac{\delta_0}{2}} \norm{u}_{\ell^2(B_\frac{1}{2})}+e^{-c_2h^{-1}\frac{\delta_0}{2}} \norm{u}_{\ell^2(B_2)}\right) \\
         &\leq 2Ce^{-c_2h^{-1}\frac{\delta_0}{2}} \norm{u}_{\ell^2(B_2)}.
    \end{align*}
    \end{enumerate}
    Combining the three cases we obtain \eqref{TCT:4.2eq2} with $D=\max(2C,e^{(c_1+c_2)\alpha\tau_0})$,
    $\alpha=\dfrac{c_2}{c_1+c_2}$ and $c_0=c_2\dfrac{\delta_0}{2}$.
\end{proof}

In order to prove Theorem \ref{TBT:4.2} for a 1-point periodic graph we need the following lemma, 

    \begin{lemma}[Caccioppoli]\label{lemma4}
        Let $u:h\mathcal{V}\mapsto\R$ be a weak solution of 
        \begin{align*}
            h^{-2}\sum_{j=1}^k &D^+_jD^-_ju(x)+h^{-1}\sum_{j=1}^kB_j(x)D_j^+u(x) \\
            &+\sum_{e\in\mathcal{P}_L} V_e(x+he)u(x+he)  =0,
        \end{align*}
        in the sense that $u\in H^1_{\text{loc},h}(h\mathcal{V})$ and for all $v\in H^1(h\mathcal{V})$ such that $\supp v$ is bounded, we have 
        \begin{align*}
            \sum_{x\in h\mathcal{V}}& h^{-2}\sum_{i,j=1}^k D^+_iu(x)D^+_jv(x)+\sum_{x\in h\mathcal{V}}h^{-1}\sum_{j=1}^kB_j(x)D_j^+u(x)v(x) \nonumber\\ 
            &+\sum_{x\in h\mathcal{V}}\sum_{e\in\mathcal{P}_L} V_e(x+he)u(x+he) v(x) =0. 
        \end{align*}
        Let $0<10h<r_1<r_1+100h<r_2$. Then there exists a constant $C>1$ depending on $r_1,r_2,\norm{V_e}_{L^\infty(h\mathcal{V})}$ and $\norm{B_j}_{L^\infty(h\mathcal{V})}$ such that
        \begin{equation*}
            \sum_{j=1}^k \norm{h^{-1}D_j^+u}^2_{\ell^2(B_{r_1})}\leq C\norm{u}_{\ell^2(B_{r_2})}^2.
        \end{equation*}
    \end{lemma}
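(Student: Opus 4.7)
This is a classical Caccioppoli-type estimate: test the equation against $v=\eta^2u$ for a cut-off $\eta$, extract the good term $\sum\eta^2|h^{-1}D_j^+u|^2$ via a discrete Leibniz rule, and absorb the remaining errors using Young's inequality. The buffers $10h<r_1$ and $r_1+100h<r_2$ in the hypothesis are precisely what allow a usable cut-off at scale $h$ in the discrete setting.

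\textbf{Setup.} Pick $\eta:\R^d\to[0,1]$ smooth with $\eta\equiv1$ on $B_{r_1+10h}$, $\supp\eta\subset B_{(r_1+r_2)/2}$, and $\|\nabla\eta\|_\infty\lesssim(r_2-r_1)^{-1}$. Restricted to $h\mathcal{V}$, this yields $|D_j^\pm\eta(x)|\lesssim h(r_2-r_1)^{-1}$. The test function $v=\eta^2u$ has bounded support, so it is admissible in the weak formulation.

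\textbf{Main identity and error control.} The discrete Leibniz rule
\begin{equation*}
D_j^+(\eta^2u)(x)=\eta^2(x)D_j^+u(x)+u(x+he_j)D_j^+(\eta^2)(x)
\end{equation*}
isolates the good term $\sum_{x,j}h^{-2}\eta^2(x)|D_j^+u(x)|^2$ from the Laplacian piece of the weak formulation. The three error contributions are then controlled as follows. For the \emph{Leibniz error}, the factorization $D_j^+(\eta^2)(x)=(\eta(x+he_j)+\eta(x))D_j^+\eta(x)$ gives $|D_j^+(\eta^2)(x)|\lesssim h(r_2-r_1)^{-1}(\eta(x)+\eta(x+he_j))$, and Young's inequality with small parameter $\varepsilon>0$ yields the bound
\begin{equation*}
\varepsilon\sum_{x,j}h^{-2}|D_j^+u(x)|^2(\eta^2(x)+\eta^2(x+he_j))+C_\varepsilon(r_2-r_1)^{-2}\|u\|_{\ell^2(B_{r_2})}^2.
\end{equation*}
The \emph{drift term} is standard: $|h^{-1}B_j(x)D_j^+u(x)\eta^2(x)u(x)|\le\varepsilon h^{-2}|D_j^+u|^2\eta^2+C_\varepsilon\|B_j\|_\infty^2\eta^2|u|^2$. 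For the \emph{potential term}, use $|V_e(x+he)u(x+he)\eta^2(x)u(x)|\le\tfrac12\|V_e\|_\infty(|u(x+he)|^2+|u(x)|^2)\eta^2(x)$; summing over the finite set $\mathcal{P}_L$ and noting that shifts by $|he|\le Lh<r_2-r_1$ stay within $B_{r_2}$ yields a bound by $C\|u\|_{\ell^2(B_{r_2})}^2$.

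\textbf{Conclusion and main obstacle.} The shifted weight $\eta^2(x+he_j)$ in the Leibniz error is converted back to $\eta^2(x)$ using the pointwise inequality $\eta^2(x+he_j)\le2\eta^2(x)+O(h^2(r_2-r_1)^{-2})$; the residual is then handled via the elementary bound $|D_j^+u(x)|^2\le 2(|u(x)|^2+|u(x+he_j)|^2)$, which gives a further contribution of order $(r_2-r_1)^{-2}\|u\|_{\ell^2(B_{r_2})}^2$. Choosing $\varepsilon$ small enough, all three $\varepsilon\sum h^{-2}|D_j^+u|^2\eta^2$ contributions are absorbed into the good term on the left, leaving $\sum_{x,j}\eta^2(x)h^{-2}|D_j^+u(x)|^2\le C\|u\|_{\ell^2(B_{r_2})}^2$ with $C=C(r_1,r_2,\|V_e\|_\infty,\|B_j\|_\infty)$. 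Since $\eta\equiv1$ on $B_{r_1}$, the lemma follows. The main technical subtlety is exactly this off-by-$h$ issue: the discrete Leibniz rule naturally produces weights $\eta^2(x+he_j)$ that must be realigned to $\eta^2(x)$, and the realignment relies on $\eta$ varying slowly at scale $h$, which is precisely what the $100h$ buffer in the hypothesis guarantees.
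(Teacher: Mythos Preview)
Your proof is correct and follows essentially the same approach as the paper: test against $v=\eta^2u$, use the discrete Leibniz rule to extract the good term, and absorb errors via Young's inequality. The only cosmetic difference is in handling the shifted weight $\eta^2(x+he_j)$: the paper rewrites $\eta(x+he_j)+\eta(x)=2\eta(x)+D_j^+\eta(x)$ \emph{before} applying Young, which aligns the weight to $\eta^2(x)$ in one stroke, whereas you apply Young first and then realign via $\eta^2(x+he_j)\le 2\eta^2(x)+2(D_j^+\eta)^2$---both routes lead to the same estimate.
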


    \begin{proof}
        Let $\eta:h\mathcal{V}\mapsto\R$ be a cut-off function which is equal to one on $B_{r_1}$ and vanishes outside outside $B_{r_2}$, and let 
        $$
        M=\max\{\|V_e\|_{L^\infty},\|B_j\|_{L^\infty}^2\}.
        $$ 
        From the Mean Value Theorem, we also have
        $$
        |D_j^+\eta(x)|=|\eta(x+he_j)-\eta(x)|\leq h\max_j\|e_j\|\max_{B_{r_2}}|\nabla\eta|:= C_\eta h.
        $$
        Let 
        $$
        A_{u,v}(x)=h^{-2}\sum_{j=1}^k D^+_iu(x)D^+_jv(x),
        $$
        and 
        \begin{align*}
            B_{u,v}(x)=&h^{-1}\sum_{j=1}^kB_j(x)D_j^+u(x)v(x) \\
            &+\sum_{e\in\mathcal{P}_L} V_e(x+he)u(x+he)  v(x).
        \end{align*}
        
        We may now take $v=(u\eta^2)(x)$ in the definition of the weak solution that we write as
        \begin{equation}\label{aux:4.4}
          \sum_{x\in h\mathcal{V}}\left(A_{u,u\eta^2}(x)+B_{u,u\eta^2}(x)\right)=0.
        \end{equation}
        As
        $$
        D_j^+[u\eta^2](x)=D_j^+[u](x)\eta^2(x)+u(x+he_j)D_j^+[\eta^2](x)
        $$
        we have
        \begin{eqnarray}
        \sum_{x\in h\mathcal{V}}A_{u,u\eta^2}(x)&:=&\sum_{x\in h\mathcal{V}} h^{-2}\sum_{j=1}^k D_j^+[u](x)D_j^+[u\eta^2](x)\notag\\
        &=&\sum_{j=1}^k\norm{h^{-1}\eta D_j^+u}^2_{\ell^2(h\mathcal{V})} \label{ineqcacci3} \\
        &&+h^{-2}\sum_{x\in h\mathcal{V}} \sum_{j=1}^k D_j^+[u](x)u(x+he_j)D_j^+[\eta^2](x).\label{ineqcacci}
        \end{eqnarray}
        We now bound the second term \eqref{ineqcacci} from below. We first write
$$
        \begin{aligned}
  D_j^+&[u](x)u(x+he_j)D_j^+[\eta^2](x)\\
  =&D_j^+[u](x)u(x+he_j)D_j^+[\eta](x) \bigl(\eta(x+he_j)+\eta(x)\bigr)\\
  =&2D_j^+[u](x)u(x+he_j)\eta(x)D_j^+[\eta](x) +D_j^+[u](x)u(x+he_j)D_j^+[\eta](x)^2\\
  \geq&-\frac{1}{2}D_j^+[u](x)^2\eta(x)^2-2u(x+he_j)^2D_j^+[\eta](x)^2\\
  &-\frac{1}{2}u(x+he_j)^2D_j^+[\eta](x)^2
  -\frac{1}{2}u(x)^2D_j^+[\eta](x)^2
        \end{aligned}
        $$
        where we have used $2ab\geq-\dfrac{a^2}{2}-2b^2$ for the first term and $ab\geq-\dfrac{a^2}{2}-\dfrac{b^2}{2}$
        for the second one after arranging factors properly.
        Notice that the 3 last terms are zero if $x\notin B_{r_2}$.
  Summing over $j=1,\ldots, k$ and then over $x\in h\mathcal{V}$, leads to     
        \begin{align*}
            \sum_{x\in h\mathcal{V}} h^{-2}\sum_{j=1}^k &D_j^+[u](x)u(x+he_j)D_j^+[\eta^2](x) \\
            &\geq -\frac{1}{2}\sum_{x\in h\mathcal{V}}\sum_{j=1}^k \left(\eta^2(x)h^{-2}\left(D_j^+u(x)\right)^2\right) \\
            & \qquad\qquad- 3k\left(\sum_{x\in B_{r_2}}|u(x)|^2\right)\sup_j\left[h^{-1}\left(D_j^+\eta(x)\right)\right]^2\\
            &\geq
            -\frac{1}{2}\sum_{j=1}^k\norm{h^{-1}\eta D_j^+u}^2_{\ell^2(h\mathcal{V})}
            -3kC_\eta^2\norm{u}^2_{\ell^2(B_{r_2})}.
        \end{align*}
Inserting this into \eqref{ineqcacci} gives
$$
\sum_{x\in h\mathcal{V}}A_{u,u\eta^2}(x)\geq 
\frac{1}{2}\sum_{j=1}^k\norm{h^{-1}\eta D_j^+u}^2_{\ell^2(h\mathcal{V})}
            -3kC_\eta^2\norm{u}^2_{\ell^2(B_{r_2})}.
$$
Using Young's inequality, we bound the contribution of $B_{u,u\eta^2}$ and get
$$
\sum_{x\in h\mathcal{V}}B_{u,u\eta^2}(x)\leq \frac{1}{4}\sum_{j=1}^k\norm{h^{-1}\eta D_j^+u}^2_{\ell^2(h\mathcal{V})}+M\left(|\mathcal{P}_L|+4k+1\right)\norm{u}^2_{\ell^2(B_{r_2})}.
$$
Now, going back to \eqref{aux:4.4}, we obtain
$$
\sum_{j=1}^k\norm{h^{-1}\eta D_j^+u}^2_{\ell^2(h\mathcal{V})}\leq C\norm{u}_{\ell^2(B_{r_2})}
$$
where $C$ is a constant that depends on $r_1,r_2$ and $M$. Finally, as $\eta=1$ on $B_{r_1}$ we have 
$$
\dst\sum_{j=1}^k\norm{h^{-1}D_j^+u}^2_{\ell^2(B_{r_1})}\leq \sum_{j=1}^k\norm{h^{-1}\eta D_j^+u}^2_{\ell^2(h\mathcal{V})}\leq C\norm{u}_{\ell^2(B_{r_2})}
$$
which is the expected inequality.            
    \end{proof}
    
    We now prove the first part of Theorem \ref{TBT:4.2} using a cut-off argument. 
 
    \begin{proof}[Proof of Theorem \ref{TBT:4.2} (1)]
      
    Let $u:h\mathcal{V}\mapsto\R$ such that $H^L_{h\Gamma}u(x)=0$ for all $x\in B_4$. We set $$
    M:= \max\left\{\norm{V_e}_{L^\infty},\|B_j\|_{L^\infty}\right\}<+\infty.
    $$
    Fix $\eps>0$ to be small enough and assume that $h_0>0$ is sufficiently small to have $(L+1)h_0\kappa<\eps$ with $\kappa=\max\|e_i\|$ and $h_0<\sqrt{M}e^{\tau(\ffi(3/2)-\ffi(1/4))}$. We consider the function $w(x)=\theta(x)u(x)$, with $0\leq\theta(x)\leq1$ a $\mathcal{C}^\infty(\R^d)$ cut-off function defined as
    \begin{equation*}
        \theta(x)=\left\{\begin{array}{cc}
             0, &\mbox{when }|x|_\Gamma<\frac{1}{4}+2\eps\mbox{ or }|x|_\Gamma>\frac{7}{4}-2\eps,\\[6pt]
             1, &\mbox{when }\frac{1}{3}-2\eps\leq |x|_\Gamma\leq\frac{3}{2}+2\eps.
        \end{array} \right.
    \end{equation*}
    We assume that $\eps$ is small enough to have 
    $$
    \dfrac{1}{4}+2\eps<\dfrac{1}{3}-2\eps<\dfrac{1}{2}<1<\dfrac{3}{2}+2\eps<\dfrac{7}{4}-2\eps<2.
    $$

         Now using the Carleman Estimate \eqref{carleman:eq1bis} on $w$ and the triangle inequality, we get that for all $\tau\in(\tau_0,\delta_0h^{-1})$
    \begin{multline*}
        \tau^3\norm{e^{\phi_\tau} w}^2+\tau\norm{h^{-1}e^{\phi_\tau}D_sw}^2\lesssim 
        \norm{e^{\phi_\tau} h^{-2}\Delta_{h\Gamma}w}^2  \\
\lesssim    \norm{e^{\phi_\tau} H^L_{h\Gamma}w}^2    +\sum_{e\in\mathcal{P}_L}
\norm{e^{\phi_\tau} V_e(\cdot+he)w(\cdot+he)}^2    + 
\norm{h^{-1}e^{\phi_\tau} \sum_{j=1}^kB_j(\cdot)D_s^jw(\cdot)}^2\\
\lesssim    \norm{e^{\phi_\tau} H^L_{h\Gamma}w}^2    +\sum_{e\in\mathcal{P}_L}
\norm{e^{\phi_\tau} w(\cdot+he)}  ^2+
\norm{h^{-1}e^{\phi_\tau}D_sw}^2
    \end{multline*}
since $\|V_e\|_\infty,\|B_j\|_\infty\lesssim 1$. 
Now, using the same argument as in the first step of the proof of our Carleman Inequality,
$\norm{e^{\phi_\tau} w(\cdot+he)}  ^2\lesssim\norm{e^{\phi_\tau} w(\cdot)}  ^2$.
We can thus absorb the two last terms on the left hand side into the corresponding terms on the right hand side
by taking $\tau$ large enough. We then obtain
  \begin{equation}\label{carl:1}
        \tau^3\norm{e^{\phi_\tau} w}^2+\tau\norm{h^{-1}e^{\phi_\tau}D_sw}^2
\lesssim    \norm{e^{\phi_\tau} H^L_{h\Gamma}w}^2.
\end{equation}

Now we further lower bound the left hand side.
First, we can discard the second term which is no longer useful. Next, since $w=u$ on $B_1\setminus B_{1/2}$
and since $\ffi$ is decreasing, we obtain
$$
        e^{2\tau\ffi(1)}\norm{u}^2_{\ell^2(B_{1}\setminus B_{1/2})}\leq \norm{e^{\phi_\tau} w}^2  .
$$
As $\tau^3\geq 1$, \eqref{carl:1} further reduces to
    \begin{equation}
        \label{eq:carl2}
e^{2\tau\ffi(1)}\norm{ u}^2_{\ell^2(B_{1}\setminus B_{1/2})}
\lesssim    \norm{e^{\phi_\tau} H^L_{h\Gamma}w}^2.
    \end{equation}

The next step consists in upper-bounding the right hand side. For this, we note that, as $H^L_{h\Gamma}u(x)=0$, we can then write
    \begin{align*}
       H^L_{h\Gamma}w(x)
        =&h^{-2}\sum_{j=1}^k\Delta_{j}\theta(x)u(x-he_j)\\
        &+ \sum_{j=1}^k h^{-2}D_j^+\theta(x)D_j^+u(x)+\sum_{j=1}^k h^{-2}D_j^+\theta(x)D_j^-u(x)\bigr) \\
        &+ h^{-1}\sum_{j=1}^kB_j(x)D_j^+\theta(x)u(x+he_j)-h^{-1}\sum_{j=1}^kB_j(x)D_j^-\theta(x)u(x-he_j)\\
        &+\sum_{e\in \mathcal{P}_L} \bigl(\theta(x+he)-\theta(x)\bigr)V_e(x+he)u(x+he) \\
        =&T_1u(x)+T_2^+u(x)+T_2^-u(x)+T_3^+u(x)+T_3^-u(x)+T_4u(x).
     \end{align*}
It then follows from \eqref{eq:carl2} that
\begin{eqnarray}
   \norm{e^{\phi_\tau} H^L_{h\Gamma}w}^2
&\lesssim& \norm{e^{\phi_\tau}T_1u}^2+\norm{e^{\phi_\tau}T_2^+u}^2+\norm{e^{\phi_\tau}T_2^-u}^2\notag\\
&&+\norm{e^{\phi_\tau}T_3^+u}^2+\norm{e^{\phi_\tau}T_3^-u}^2+\norm{e^{\phi_\tau}T_4u}^2.    \label{eq:carl3}
\end{eqnarray}
We will now bound each term on the right hand side.

Let us start with $T_1$. Define 
    $$
    \s=\left(B_{\frac{1}{3}-\eps}\setminus B_{\frac{1}{4}+\eps} \right)\cup\left( B_{\frac{7}{4}-\eps}\setminus B_{\frac{3}{2}+\eps}\right)
    $$
    and notice that $\Delta_j\theta(x)=D_j^\pm\theta(x)=0$ when $x\notin\s$
    and that, from the Mean Value Theorem, $h^{-1}|D_j^{\pm}\theta(x)|,h^{-2}|\Delta_j\theta(x)|\lesssim1$.
Then
    \begin{eqnarray}
 \norm{e^{\phi_\tau} T_1 u}^2
 &=&\sum_{x\in h\mathcal{V}} \abs{e^{\phi_\tau(x)}\sum_{j=1}^kh^{-2}\Delta_{j}\theta(x)u(x-he_j)}^2\notag\\
&\lesssim& \sum_{x\in h\mathcal{V}\cap\s} \abs{e^{\phi_\tau(x)}\sum_{j=1}^k|u(x-he_j)|}^2\notag\\
&\lesssim&e^{2\tau\varphi(1/4)}\norm{u}^2_{B_{\frac{1}{3}}\setminus B_{\frac{1}{4}}}
+e^{2\tau\varphi(3/2)}\norm{u}^2_{B_{\frac{7}{4}}\setminus B_{\frac{3}{2}}} \notag\\
        &\lesssim& e^{2\tau\varphi(1/4)}\norm{u}^2_{B_\frac{1}{2}}+e^{2\tau\varphi(3/2)}\norm{u}^2_{B_2} \label{eq:p3b2}
    \end{eqnarray}
where we used again the monotonicity properties of $\ffi$ in the third line.

We now bound $T_2^\pm$. 
First 
    \begin{eqnarray*}
    \norm{e^{\phi_\tau} T_2^\pm u(x)}^2
    &=&\sum_{x\in h\mathcal{V}} \abs{\sum_{j=1}^k h^{-2}e^{\phi_\tau(x)}\left(D_j^+\theta(x)D_j^\pm u(x)\right)}^2\\
    &\lesssim& \sum_{x\in h\mathcal{V}\cap \s} \abs{\sum_{j=1}^k h^{-1}e^{\phi_\tau(x)}\left|D_j^\pm u(x)\right|}^2\\        
    &\lesssim& e^{2\tau\varphi(1/4)}\norm{ h^{-1}\sum_{j=1}^kD_j^\pm u}^2_{B_{\frac{1}{3}}\setminus B_{\frac{1}{4}}} \\
    &&  \qquad  +e^{2\tau\varphi(3/2)}\norm{h^{-1}\sum_{j=1}^kD_j^\pm u}^2_{B_{\frac{7}{4}} \setminus B_{\frac{3}{2}}}.\\
    \end{eqnarray*}
    Using our Caccioppoli estimate given in Lemma \ref{lemma4}, we obtain
\begin{align*}
    \norm{ h^{-1}\sum_{j=1}^kD_j^\pm u}^2_{B_{\frac{1}{3}}\setminus B_{\frac{1}{4}}}\leq&
    \norm{ h^{-1}\sum_{j=1}^kD_j^\pm u}^2_{B_{\frac{1}{3}}}\lesssim \norm{u}^2_{B_{\frac{1}{2}}},\\
    \norm{h^{-1}\sum_{j=1}^kD_j^\pm u}^2_{B_{\frac{7}{4}} \setminus B_{\frac{3}{2}}}
    \leq&\norm{h^{-1}\sum_{j=1}^kD_j^\pm u}^2_{B_{\frac{7}{4}}}
    \lesssim \norm{u}^2_{B_2}.
\end{align*}
Thus ultimately leads to
\begin{equation}
    \label{eq:p3b3}
\norm{e^{\phi_\tau} T_2^\pm u(x)}^2\lesssim e^{2\tau\varphi(1/4)}\norm{u}^2_{B_\frac{1}{2}}+e^{2\tau\varphi(3/2)}\norm{u}^2_{B_2}.
\end{equation}

We now turn to $T_3^\pm$ and $T_4$. The computations are similar for all three of them
and rely on the bound $\norm{V_e}_{L^\infty},\|B_j\|_{L^\infty}\leq M$ and on the fact that for each $e\in\mathcal{P}_L$,
    $$
    |\theta(x-he)-\theta(x)| \lesssim h\mathbf{1}_{\s}.
    $$
It follows that
\begin{eqnarray}
   \|e^{\phi_\tau}T_4u\|^2&\lesssim&
   \sum_{e\in\mathcal{P}_L}\|e^{\phi_\tau}\bigl(\theta(x+he)-\theta(x)\bigr)V_e(x+he)u(x+he)\|^2\notag\\
   &\lesssim& h^2M^2\|e^{\phi_\tau}u\mathbf{1}_{\s}\|^2\notag\\
   &\lesssim& e^{2\tau\varphi(1/4)}\norm{u}^2_{B_\frac{1}{2}}+e^{2\tau\varphi(3/2)}\norm{u}^2_{B_2} \label{eq:boundt4}
\end{eqnarray}
where we have concluded in the same way as the last line of \eqref{eq:p3b2}.

The bound for $T_3^\pm$ is obtained the same way:
\begin{equation}
    \|e^{\phi_\tau}T_3^\pm u\|^2\lesssim e^{2\tau\varphi(1/4)}\norm{u}^2_{B_\frac{1}{2}}+e^{2\tau\varphi(3/2)}\norm{u}^2_{B_2}.\label{eq:boundt3}
\end{equation}

\smallskip

We can now conclude by introducing  \eqref{eq:p3b2}-\eqref{eq:p3b3}-\eqref{eq:boundt4}-\eqref{eq:boundt3} 
into \eqref{eq:carl3}:
$$
\norm{e^{\phi_\tau} H^L_{h\Gamma}w}^2
\lesssim e^{2\tau\varphi(1/4)}\norm{u}^2_{B_\frac{1}{2}}+e^{2\tau\varphi(3/2)}\norm{u}^2_{B_2}
$$
which with \eqref{eq:carl2} gives
$$
e^{2\tau \varphi(1)}\norm{u}^2_{B_{1} \setminus B_{\frac{1}{2}}}
\leq C\bigl(e^{2\tau\varphi(1/4)}\norm{u}^2_{B_\frac{1}{2}}+e^{2\tau\varphi(3/2)}\norm{u}^2_{B_2}\bigr)
$$
for some $C>0$. We rewrite this as
$$
\norm{u}^2_{B_{1} \setminus B_{\frac{1}{2}}}
\leq C\bigl(e^{2\tau\bigl(\varphi(1/4)-\varphi(1)\bigr)}\norm{u}^2_{B_\frac{1}{2}}
+e^{2\tau\bigl(\varphi(3/2)-\varphi(1)\bigr)}\norm{u}^2_{B_2}\bigr).
$$
Finally, we add $\norm{u}^2_{B_{\frac{1}{2}}}$ on both sides and notice that
$$
c_1:=2\bigl(\varphi(1/4)-\varphi(1)\bigr)>0 \quad\mbox{ and } \quad c_2:=2\bigl(\varphi(1)-\varphi(3/2)\bigr)>0
$$ 
to obtain
$$
 \norm{u}^2_{B_{1}}
\leq (1+C)\bigl(e^{c_1\tau}\norm{u}^2_{B_\frac{1}{2}}
+e^{-c_2\tau}\norm{u}^2_{B_2}\bigr)
$$
as claimed.
\end{proof}

\section{Extending the family of graphs}

\subsection{Hexagonal Lattice}

Our goal now is to extend Theorem \ref{TBT:4.2} to a larger family of graphs. We start with the extension to the Hexagonal Lattice, a famous periodic graph that is not a 1-point periodic graph.
It is a part of a slightly larger family of periodic graphs that we describe now.

\begin{center}
\tikzset{
    my hex/.style={regular polygon, regular polygon sides=6, draw, inner sep=0pt, outer sep=0pt, minimum size=1cm},
    my circ/.style={draw, circle, color=blue, fill=blue, inner sep=0pt, minimum size=0.75mm},
    my tri/.style={regular polygon, regular polygon sides=3, draw, inner sep=0pt, outer sep=0pt, minimum size=1mm, color=red, fill=red},
    my squa/.style={regular polygon, regular polygon sides=4, draw, inner sep=0pt, outer sep=0pt, minimum size=1mm, color=green, fill=green}
}
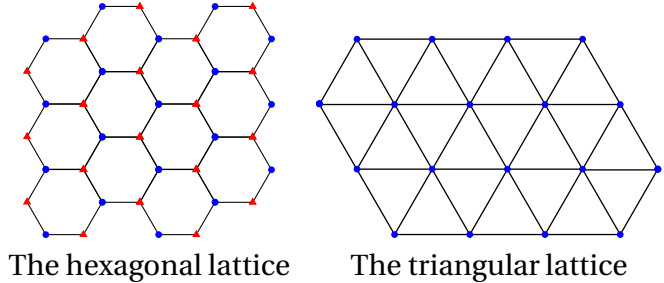
\begin{figure}[ht]
\begin{tabular}{cc}
\begin{tikzpicture}
 \foreach \i in {0,...,1} 
    \foreach \j in {0,...,2} {
            \node(h)[my hex] at (1.5*\i,0.87*\j) {};
      \foreach \t in {1,3,5} \node[my circ] at ($(h)+({(\t-1)*60}:.5)$){};
      \foreach \t in {2,4,6} \node[my tri] at ($(h)+({(\t-1)*60}:.5)$){};
            \node(h)[my hex] at (1.5*\i+0.75,0.87*\j+0.43) {};
       \foreach \t in {1,3,5} \node[my circ] at ($(h)+({(\t-1)*60}:.5)$){};
      \foreach \t in {2,4,6} \node[my tri] at ($(h)+({(\t-1)*60}:.5)$){};
            }
\end{tikzpicture} & 
\begin{tikzpicture}
    \foreach \j in {0,...,1} {
    \draw [line width=.5pt] (0.5,\j*1.73) -- (3.5,\j*1.73);
    \draw [line width=.5pt] (0,\j*1.73+0.87) -- (3,\j*1.73+0.87);
    \draw [line width=.5pt] (0.5+\j,0) -- (2+\j,2.6);
    \draw [line width=.5pt] (0+3.5*\j,0.87+\j*0.87) -- (1+1.5*\j,2.6-2.6*\j);
    \draw [line width=.5pt] (-0.5+3.5*\j,1.73+\j*0.87) -- (0.5+3.5*\j,0.87*\j);
    \draw [line width=.5pt] (-0.5+4*\j,1.73-\j*1.73) -- (0+4*\j,2.6-1.73*\j);
    \draw [line width=.5pt] (-0.5+3.5*\j,1.73-0.87*\j) -- (0.5+\j*3.5,1.73-0.87*\j);
    }
    \foreach \i in {0,...,2} {
    \draw [line width=.5pt] (\i,2.6) -- (\i+1.5,0);
    }
    \foreach \j in {0,...,1}
    \foreach \i in {0,...,3} {
    \node(h)[my circ] at (\i+0.5,\j*1.73) {};
    \node(h)[my circ] at (\i,\j*1.73+0.87) {};
    \node(h)[my circ] at (4-\j*4.5,0.87+\j*0.87) {}; 
    }
\end{tikzpicture} \\
     The hexagonal lattice & The triangular lattice
\end{tabular}
\caption{The hexagonal lattice (2 points) and the triangular lattice (1 point) associated}
\end{figure}
\end{center}
\smallskip

\begin{definition}[Hexagonal type periodic graphs]\label{defhex:1}
We consider a lattice $\mathcal{L}$ in $\R^d$ and two points $p_1,p_2\in\R^d$
such that $p_2-p_2\notin \mathcal{L}$.
The periodic graph $h\Gamma=\{h\mathcal{L},h\mathcal{V},h\mathcal{E}\}$
with mesh size $h$ is of hexagonal type if
\begin{enumerate}
    \item the vertex set $h\mathcal{V}$ is of the form $h\mathcal{V}=h\mathcal{V}_1\cup h\mathcal{V}_2$ with $h\mathcal{V}_j=hp_j+h\mathcal{L} $;

    \item there exist vectors $\mathbf{e_m}\in\mathcal{L}$, $j=1,\ldots,k$ such that
    \begin{enumerate}
        \item if $x\in h\mathcal{V}_1$ then 
    $(x,y)\in h\mathcal{E}$ if and only if 
    $y=x+h(p_2-p_1+\mathbf{e_m})$ for some $j=1,\ldots,k$, in particular $y\in h\mathcal{V}_2$;

    \item if $x\in h\mathcal{V}_2$ then 
    $(x,y)\in h\mathcal{E}$ if and only if 
    $y=x-h(p_2-p_1+\mathbf{e_m})$ for some $j=1,\ldots,k$, in particular $y\in h\mathcal{V}_1$.
    \end{enumerate}
    \item the graph is connected.    
\end{enumerate}
\end{definition} 
The third condition is just the fact that the graph is not oriented. Note that each vertex has degree $k$
and the connectedness condition implies that $k\geq d$.

For the remaining of this subsection, we will assume that $h\Gamma=\{h\mathcal{L},h\mathcal{V},h\mathcal{E}\}$ is of hexagonal type. Then, every function $g:h\mathcal{V}\mapsto \C$ may be identified
with the function $\bar g=(g_1,g_2):h\mathcal{L}\mapsto \C^2$ where $g_j(\cdot)=g(hp_j+\cdot)$.
Note that
$$
\|G\|_{\ell^2(h\mathcal{L})}^2:=\|g_1\|_{\ell^2(h\mathcal{L})}^2+\|g_2\|_{\ell^2(h\mathcal{L})}^2
=\|g\|_{\ell^2(h\mathcal{V})}^2.
$$

\begin{center}
    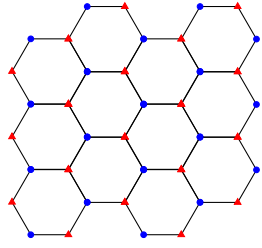
\begin{figure}[ht]
    \tikzset{
    my hex/.style={regular polygon, regular polygon sides=6, draw, inner sep=0pt, outer sep=0pt, minimum size=1cm},
    my circ/.style={draw, circle, color=blue, fill=blue, inner sep=0pt, minimum size=0.75mm},
    my tri/.style={regular polygon, regular polygon sides=3, draw, inner sep=0pt, outer sep=0pt, minimum size=1mm, color=red, fill=red},
    my squa/.style={regular polygon, regular polygon sides=4, draw, inner sep=0pt, outer sep=0pt, minimum size=1mm, color=green, fill=green}
}
\begin{tikzpicture}
 \foreach \i in {0,...,1} 
    \foreach \j in {0,...,2} {
            \node(h)[my hex] at (1.5*\i,0.87*\j) {};
      \foreach \t in {1,3,5} \node[my circ] at ($(h)+({(\t-1)*60}:.5)$){};
      \foreach \t in {2,4,6} \node[my tri] at ($(h)+({(\t-1)*60}:.5)$){};
            \node(h)[my hex] at (1.5*\i+0.75,0.87*\j+0.43) {};
       \foreach \t in {1,3,5} \node[my circ] at ($(h)+({(\t-1)*60}:.5)$){};
      \foreach \t in {2,4,6} \node[my tri] at ($(h)+({(\t-1)*60}:.5)$){};
            }
\end{tikzpicture}
\caption{The standard Hexagonal lattice, $g_1$ would be the restriction of $g$ to the blue (circle) points) and $g_2$ the restriction to the red (triangle) points}
\end{figure}
\end{center}

%
%

For the remaining of Section 5, we study the operator 
$$
H_h f(x)=h^{-2}\Delta_{\mathcal{H}}f(x)+V(x)f(x).
$$

\begin{theorem}[Three Balls Inequality]\label{TBT:hex}
Let $\mathcal{H}$ be an hexagonal type periodic graph:
    \begin{enumerate}
    \item There exist constants $h_0, \delta_0 \in (0,1)$, $c_1,c_2>0$ and $\tau_0>1$ such that for all $\tau\in(\tau_0,\delta_0h^{-1})$, $h\in(0,h_0)$ and $u:h\mathcal{V}\mapsto\R$ with $H_hu=0$ on $B_4$ we have
    \begin{equation}
        \norm{u}_{\ell^2(B_1)} \leq C\left(e^{c_1\tau}\norm{u}_{\ell^2(B_{1/2})}+e^{-c_2\tau}\norm{u}_{\ell^2(B_2)}\right).\label{eq:51}
    \end{equation}
\item There exist $\alpha \in (0,1)$, $c_0>0$, $h_0\in(0,1)$ and $C>1$ such that for
$h\in(0,h_0)$ and $u:h\mathcal{V}\to\R$ with $H_h u=0$ in $B_4$ we have 
    \begin{equation}\label{TCT:hex}
        \norm{u}_{\ell^2(B_1)} \leq D\left(\norm{u}_{\ell^2(B_{1/2})}^\alpha\norm{u}_{\ell^2(B_2)}^{1-\alpha} + e^{-c_0h^{-1}}\norm{u}_{\ell^2(B_2)} \right).
    \end{equation}
    \end{enumerate}
\end{theorem}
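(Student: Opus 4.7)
The plan is to reduce Theorem \ref{TBT:hex} to the 1-point periodic case covered by Theorem \ref{TBT:4.2}, following the scheme announced in the introduction: restrict $u$ to one sublattice, derive an effective Schr\"odinger-type equation on the associated 1-point periodic graph, and transfer the $\ell^2$ estimate back to $u$. Since the implication $(1)\Longrightarrow(2)$ in Theorem \ref{TBT:4.2} was obtained by a purely analytic optimization of the Carleman parameter $\tau$, the same argument will turn \eqref{eq:51} into \eqref{TCT:hex}; I therefore focus on part (1).

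Write $u_1(\xi) = u(hp_1+\xi)$ and $u_2(\eta) = u(hp_2+\eta)$ for $\xi,\eta \in h\mathcal{L}$. The identity $H_h u = 0$ decouples as the two relations
\[
\sum_{m=1}^k u_2(\xi + h e_m) = \bigl(k - h^2 V(hp_1 + \xi)\bigr) u_1(\xi), \qquad \sum_{n=1}^k u_1(\eta - h e_n) = \bigl(k - h^2 V(hp_2 + \eta)\bigr) u_2(\eta),
\]
which I will use in both directions. Let $\Gamma'$ be the 1-point periodic graph with vertex set $h\mathcal{L}$ and edge vectors $h(e_m - e_n)$ for $m \neq n$; its connectedness and the spanning property of its edge vectors follow from the connectedness of $\mathcal{H}$, and when $\mathcal{H}$ is the standard hexagonal lattice $\Gamma'$ is precisely the triangular lattice. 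A direct computation based on the first relation yields, for $x = hp_1+\xi$,
\[
\Delta_{\mathcal{H}}^2 u(x) = \Delta_{\Gamma'} u_1(\xi) - 2k\, \Delta_{\mathcal{H}} u(x).
\]
Applying $\Delta_{\mathcal{H}}$ to $\Delta_{\mathcal{H}} u = -h^2 V u$ and using the second relation above to express $u_2(\xi + h e_m)$ as a linear combination of nearby values of $u_1$ (possible since $k - h^2 V \geq k/2$ for $h$ small), one obtains after dividing by $h^2$ an equation on $\Gamma'$ of the form
\[
h^{-2} \Delta_{\Gamma'} u_1(\xi) + \widetilde V_0(\xi)\, u_1(\xi) + \sum_{m,n} \widetilde V_{m,n}(\xi)\, u_1\bigl(\xi + h(e_m - e_n)\bigr) = 0,
\]
with uniformly bounded coefficients. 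This is precisely an operator in the class \eqref{eq:formhextrig} with $L = 2$, so Theorem \ref{TBT:4.2} applied to $u_1$ on $\Gamma'$ furnishes the analogue of \eqref{eq:51} with respect to balls $B_r^{\Gamma'}$.

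The last step is an $\ell^2$-equivalence $\|u\|_{\ell^2(B_r^\Gamma)} \asymp \|u_1\|_{\ell^2(B_{r+O(h)}^{\Gamma'})}$. The inequality $\|u_1\| \leq \|u\|$ is trivial; conversely, the second coupled relation combined with Cauchy--Schwarz gives $\|u_2\|_{\ell^2(B_r^\Gamma)}^2 \lesssim \|u_1\|_{\ell^2(B_{r+O(h)}^{\Gamma'})}^2$, which supplies the reverse estimate. Since $|\cdot|_\Gamma$ and $|\cdot|_{\Gamma'}$ are two norms on $\R^d$, balls for one are contained, up to dilation by a fixed factor, in balls for the other. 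Plugging these comparisons into the inequality for $u_1$ yields \eqref{eq:51} with new constants, and the $(1)\Longrightarrow(2)$ argument from Theorem \ref{TBT:4.2} then produces \eqref{TCT:hex}.

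The main obstacle is the careful bookkeeping of radii. Each reduction step (squaring the Laplacian, eliminating $u_2$, comparing the two ambient norms) introduces an $O(h)$ enlargement of balls and possibly a bounded multiplicative factor, and these corrections have to be absorbed without spoiling the three-balls structure. The constraint $h\tau \leq \delta_0$ built into Theorem \ref{TBT:4.2} is exactly what makes the Carleman weight insensitive to such $O(h)$ adjustments, which is why the strategy is expected to succeed. A secondary conceptual point is that the effective equation for $u_1$ is genuinely nonlocal: the induced potential couples $u_1(\xi)$ to values of $u_1$ at sites up to two steps away in $\Gamma'$, and this is precisely why Theorem \ref{TBT:4.2} was formulated for the enlarged class $H^L_{h\Gamma}$ with $L \geq 1$ rather than for usual local Schr\"odinger operators only.
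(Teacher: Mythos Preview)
Your approach is correct and follows the same strategy as the paper: reduce to the 1-point periodic graph $\Gamma'$ (the paper calls it $\mathcal{T}$) by deriving an effective Schr\"odinger equation on a sublattice, then invoke Theorem \ref{TBT:4.2}. The effective equation you obtain via the identity $\Delta_{\mathcal{H}}^2 u = \Delta_{\Gamma'} u_1 - 2k\,\Delta_{\mathcal{H}} u$ coincides with the one the paper gets by direct substitution of the expression for $f_2$ into the equation at points of $\mathcal{V}_1$; incidentally, the induced nonlocal potential only reaches nearest neighbours in $\Gamma'$, so $L=1$ already suffices.

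The one genuine difference is in the closing step. You apply Theorem \ref{TBT:4.2} to $u_1$ alone and then recover $u_2$ from $u_1$ via the coupling relation and Cauchy--Schwarz, which forces you to track $O(h)$ enlargements of balls and compare the norms $|\cdot|_\Gamma$ and $|\cdot|_{\Gamma'}$. The paper instead observes that, by the symmetry of the two sublattices, $u_2$ satisfies its own effective equation of the same form on $\mathcal{T}$, applies Theorem \ref{TBT:4.2} to \emph{both} $u_1$ and $u_2$, and then simply adds the two squared inequalities using $\|u\|_{B_r}^2 = \|u_1\|_{B_r}^2 + \|u_2\|_{B_r}^2$. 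This sidesteps all the radius bookkeeping you flag as the main obstacle, at the cost of running the 1-point argument twice rather than once.
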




The idea of the proof is that a periodic graph of hexagonal type has two one-point
periodic graph hidden in it. The vertices are $h\mathcal{V}_j$ and $(x,y)$ is an edge
if $y$ is a neighbor of a neighbor of $x$ in the original graph. The key is then to
write $H_hf=0$ as $H_{j,h}f_j=0$ where $H_{j,h}$ are operators of the form
\eqref{eq:formhextrig} on the hidden one-point periodic graphs.

\begin{center}
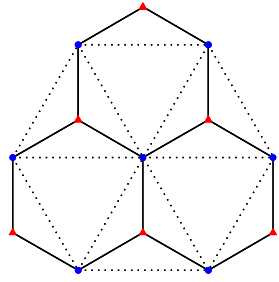
\begin{figure}[ht]
    \tikzset{
    my circ/.style={draw, circle, color=blue, fill=blue, inner sep=0pt, minimum size=0.75mm},
    my tri/.style={regular polygon, regular polygon sides=3, draw, inner sep=0pt, outer sep=0pt, minimum size=1mm, color=red, fill=red},
}
    \centering
\begin{tikzpicture}
\foreach \i in {0,...,2}
\foreach \j in {0,...,1} {
\draw [line width=.5pt] (\i*1.73,1) -- (\i*1.73,2);
\draw [line width=.5pt] (\j*1.73+0.87,2.5) -- (\j*1.73+0.87,3.5);
\draw [line width=.5pt] (0.87+\j*0.87,3.5+\j*0.5) -- (1.73+\j*0.87,4-0.5*\j);
\draw [line width=.5pt] (\j*1.73,2) -- (\j*1.73+0.87,2.5);
\draw [line width=.5pt] (\j*1.73,1) -- (\j*1.73+0.87,0.5);
\draw [line width=.5pt] (\j*1.73+0.87,2.5) -- (\j*1.73+1.73,2);
\draw [line width=.5pt] (\j*1.73+0.87,0.5) -- (\j*1.73+1.73,1);
\draw [line width=.5pt,dotted] (\j*1.73,2) -- (\j*1.73+1.73,2);
\draw [line width=.5pt,dotted] (0.87,0.5+\j*3) -- (2.6,0.5+\j*3);
\draw [line width=.5pt,dotted] (0+\j*3.47,2) -- (0.87+\j*1.73,3.5-3*\j);
\draw [line width=.5pt,dotted] (2.6*\j,2+1.5*\j) -- (0.87+2.6*\j,0.5+1.5*\j);
\draw [line width=.5pt,dotted] (0.87,0.5+\j*3) -- (2.6,3.5-3*\j);
}
\foreach \i in {0,...,2}
\foreach \j in {0,...,1} {
\node(h)[my circ] at (0.87,0.5+3*\j) {};
\node(h)[my circ] at (0.87+1.73,0.5+3*\j) {};
\node(h)[my tri] at (\j*1.73+0.87,2.5) {};
\node(h)[my circ] at (\i*1.73,2) {};
\node(h)[my tri] at (\i*1.73,1) {};
\node(h)[my tri] at (1.73,4) {};
}
\end{tikzpicture}
 \caption{The triangular lattice hidden in the Hexagonal graph}
\end{figure}
\end{center}

\begin{proof}
    Let $f\,:\mathcal{V}\to\C$ be such that $H_h f=0$ in $B_4(\mathcal{H})$. 
    Identify $f,V$ with $\bar f=(f_1,f_2),\bar V=(V_1,V_2):\  h\mathcal{L}\mapsto\C^2$.
    Then, for $x\in h\mathcal{L}$,
    \begin{eqnarray}
        H_h f(hp_1+x)&=&\frac{1}{h^{2}}\left(\sum_{m=1}^kf_2(x+h\mathbf{e}_m)
    -\bigl(k-h^2V_1(x)\bigr)f_1(x)\right)\label{eq:hfhex}\\
     H_h f(hp_2+x)&=&\frac{1}{h^{2}}\left(\sum_{m=1}^kf_1(x-h\mathbf{e}_m)
    -\bigl(k-h^2V_2(x)\bigr)f_2(x)\right).\notag
    \end{eqnarray}

Assuming that $h$ is small enough to have $h^2\|V\|_\infty<k$, we can rewrite $H_hf=0$ as
    \begin{equation}
    \label{eq:f1f2}\begin{aligned}
        f_1(x)=&\frac{1}{k-h^2V_1(x)}\left(\sum_{m=1}^kf_2(x+h\mathbf{e}_m)\right)\\
f_2(x)=&\frac{1}{k-h^2V_2(x)}\left(\sum_{j=1}^kf_1(x-h\mathbf{e}_j)\right)\\
=&\left(\frac{1}{k-h^2V_2(x)}-\frac{1}{k}\right)\sum_{m=1}^kf_1(x-h\mathbf{e}_m)+
\frac{1}{k}\sum_{j=1}^kf_1(x-h\mathbf{e}_j)
\end{aligned}
    \end{equation}

We now define a new 1 point periodic graph as follows $h\mathcal{T}=(h\mathcal{L},h\mathcal{V}',h\mathcal{E}')$
with $(x,y)\in\mathcal{E}'$ if $y=x+h(\mathbf{e}_m-\mathbf{e}_n)$, $m\not=n\in\{1,\ldots,k\}$.
Reordering the vectors $\mathbf{e}_m-\mathbf{e}_n$, $m<n$
as $\mathbf{\eta}_m$, $m=1,\ldots,K=\dfrac{k(k-1)}{2}$, we obtain that $h\mathcal{T}$
is a 1-point periodic graph with edges $(x,y)$, $y=x\pm h\mathbf{\eta}_j$, $j=1,\ldots,K$.
Note that to each $\eta_j$ there corresponds a unique $m(j),n(j)$ such that $\mathbf{\eta}_j=
\mathbf{e}_{m(j)}-\mathbf{e}_{n(j)}$.
The Laplace operator for $h\mathcal{T}$ is given by
$$
\Delta_\mathcal{T}\ffi(x)=\sum_{j=1}^K\bigl(\ffi(x+h\mathbf{\eta}_j)+\ffi(x-h\mathbf{\eta}_j)\bigr)
-2K\ffi(x).
$$
Now, injecting the expression of $f_2$ from \eqref{eq:f1f2} into \eqref{eq:hfhex},
\begin{eqnarray*}
0= H_h f(hp_1+x)&=&\frac{1}{h^{2}}\left(\sum_{m=1}^k\frac{1}{k}\sum_{n=1}^kf_1(x+h\mathbf{e}_m+h\mathbf{e}_n)
    -kf_1(x)\right)\\
    &&+\sum_{m=1}^k\frac{V_2(x+h\mathbf{e}_m)}{k-h^2V_2(x+h\mathbf{e}_m)}\sum_{n=1}^kf_1(x+h\mathbf{e}_m-h\mathbf{e}_n)
    +V_1(x)f_1(x)\\
    &=&\frac{1}{k h^{2}}\Delta_\mathcal{T}f_1(x)+\tilde V(x)f_1(x)\\
    &+&\sum_{j=1}^K\bigl(V_j^+(x+h\mathbf{\eta_j})f_1(x+h\mathbf{\eta_j})+V_j^-(x-h\mathbf{\eta_j})f_1(x-h\mathbf{\eta_j})\bigr) \\
    &:=& \frac{1}{k}H_{1,h}f_1(x)
\end{eqnarray*}
where $\tilde V=\dst V_1(x)+\sum_{m=1}^k\frac{V_2(x+h\mathbf{e}_m)}{k-h^2V_2(x+h\mathbf{e}_m)}$
and
each $V_j^\pm$ is a term of the form $\dfrac{V_2(x+h\mathbf{e}_m)}{k-h^2V_2(x+h\mathbf{e}_m)}$.
Those terms are thus bounded independently of $h\leq h_0$, provided $h_0$ is small enough.

Similarly, we obtain that 
    \begin{equation*}
        0=H_h f(hp_2+x)= \frac{1}{k}H_{2,h}f_2(x)
    \end{equation*}
with $H_{2,h}$ an operator of a similar form to $H_{1,h}$.
Both operators are of the form given by \eqref{eq:formhextrig} on the one-point
periodic graph $\mathcal{T}$ and each
$f_j: \ h\mathcal{L}\mapsto\R$ is a function on $\mathcal{T}$, for which
$H_{j,h}f_j=0$ on $x\in B_4(\mathcal{T})$.
We thus know from Theorem \ref{TBT:4.2} that there exist constants $h_0, \delta_0 \in (0,1)$, $c_1,c_2>0$ and $\tau_0>1$ such that for all $\tau\in(\tau_0,\delta_0h^{-1})$, $h\in(0,h_0)$ we have
    \begin{equation*}
        \norm{f_j}_{\ell^2(B_1)} \lesssim \left(e^{c_1\tau}\norm{f_j}_{\ell^2(B_{1/2})}+e^{-c_2\tau}\norm{f_j}_{\ell^2(B_2)}\right)
    \end{equation*}
   or equivalently 
   \begin{equation}\label{hex:tbt:1}
       \norm{f_j}^2_{\ell^2(B_1)} \lesssim \left(e^{2c_1\tau}\norm{f_j}^2_{\ell^2(B_{1/2})}+e^{-2c_2\tau}\norm{f_j}^2_{\ell^2(B_2)}\right).
   \end{equation}
   
    Finally, since for all $r\geq0$, 
    $$
    \norm{f}^2_{B_r}=\norm{f_1}^2_{B_r}+\norm{f_2}^2_{B_r},
    $$
    summing \eqref{hex:tbt:1} for $j=1,2$, we obtain
    $$
    \norm{f}_{\ell^2(B_1)}^2 \lesssim \left(e^{2c_1\tau}\norm{f}_{\ell^2(B_{1/2})}^2+e^{-2c_2\tau}\norm{f}_{\ell^2(B_2)}^2\right)
    $$
    which is equivalent to \eqref{eq:51}.

The proof that this implies \eqref{TCT:hex} is standard and the argument
was already given for Theorem \ref{TBT:4.2}.
\end{proof}


\subsection{Star periodic graphs}

We can extend Theorem \ref{TBT:hex} to star periodic graphs.

\begin{definition}\label{star:1} 
    We say that a periodic graph $h\Gamma=(h\mathcal{V},h\mathcal{E},h\mathcal{L})$ is a star periodic graph, if for every $x\in h\mathcal{V}_j$, $j=2,\dots, s$, then for every $y$ such that $(x,y)\in h\mathcal{E}$, then $y\in h\mathcal{V}_1$.
\end{definition}

In other words, a start periodic graph is obtained from a one-point periodic graph
by adding layers $\mathcal{V}_j=p_j+\mathcal{L}$, $j=2,\ldots,s$ that only have neighbors
in the first graph.

\begin{center}
    \tikzset{
    my circ/.style={draw, circle, color=blue, fill=blue, inner sep=0pt, minimum size=0.75mm},
    my tri/.style={regular polygon, regular polygon sides=3, draw, inner sep=0pt, outer sep=0pt, minimum size=1mm, color=red, fill=red},
}
\begin{figure}[ht]
    \begin{tabular}{cc}
        \begin{tikzpicture}[scale=0.5]
        \foreach \l in {0,...,1}
        \foreach \u in {0,...,1}
        \foreach \i in {0,...,2}
        \foreach \j in {0,...,3}
        \foreach \k in {0,...,4}
         {
        \draw [line width=.5pt] (\l*3,\j*1.73) -- (\l*3+1,\j*1.73);
        \draw [line width=.5pt] (1.5+\l*3,\j*1.73+0.865) -- (\l*3+2.5,\j*1.73+0.865);
        \draw [line width=.5pt] (1.5*\k,1.73*\l+0.865*\k) -- (1.5*\k-0.5,0.865+1.73*\l+\k*0.865);
        \draw [line width=.5pt] (5.5-1.5*\k,0.865+0.865*\k) -- (6-1.5*\k,1.73+0.865*\k);
        \draw [line width=.5pt] (-0.5+1.5*\u+4.5*\l,0.865-0.865*\u+4.32*\l) -- (1.5*\u+4.5*\l,1.73-0.865*\u+4.32*\l);
        \draw [line width=.5pt] (2.5+1.5*\i-\l*3,0.865+0.865*\i+3.46*\l) -- (3+1.5*\i-3*\l,0.865*\i+3.46*\l);
        \draw [line width=.5pt] (4+1.5*\l-1.5*\j,0.865*\j+2.6*\l) -- (4.5+1.5*\l-1.5*\j,0.865+0.865*\j+2.6*\l);
        \draw [line width=.5pt] (\l*3+\u*1.5,0.865*\u) -- (\l*3+\u*1.5,5.19+0.865*\u);
        }
        \foreach \j in {0,...,3}
        \foreach \l in {0,...,2}
        \foreach \u in {0,...,1}
        \foreach \i in {0,...,1} {
        \node(h)[my circ] at (6,1.73+\l*1.73) {};
        \node(h)[my circ] at (3*\i+1.5*\u,\j*1.73+0.865*\u) {};
        \node(h)[my tri] at (-0.5,0.865+\l*1.73) {};
        \node(h)[my tri] at (1+3*\i+1.5*\u,1.73*\j+0.865*\u) {};
        }         
\end{tikzpicture}
&  \begin{tikzpicture}[scale=0.5]
\foreach \i in {0,...,6}
\foreach \j in {0,...,6} {
\draw [line width=.5pt] (\i,0) -- (\i,6);
\draw [line width=.5pt] (0,\j) -- (6,\j);
}
\foreach \i in {0,...,6}
\foreach \j in {0,...,6}
\foreach \t in {0,...,5} {
\node(h)[my circ] at (\i,\j) {}; 
\node(h)[my tri] at (\t+0.5,\j) {};
\node(h)[my tri] at (\i,\t+0.5) {}; 
}

\end{tikzpicture}
\\
 a) Hexagonal star graph        & b) Subdivision of the square lattice
    \end{tabular}
    \caption{Some modification of the hexagonal lattice and subdivision of 1 point periodic graph are star periodic graphs}
    \end{figure}
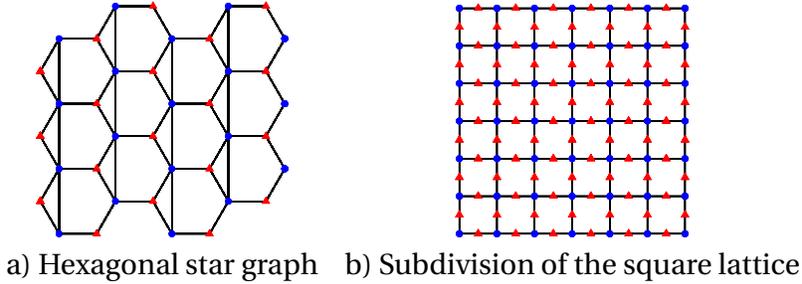
\end{center}

As for graphs of hexagonal type, a function $f$ on $h\mathcal{V}$ can then be seen as a vector
valued function $(f_1,\ldots,f_s)$ on $h\mathcal{L}$. Now, if $h^{-2}\Delta f+Vf=0$
then $f_2,\ldots,f_s$ can be expressed in terms of $f_1$ only. In a similar way to the previous
section, one can then show that $H_{1,h}f_1=0$ where $H_{1,h}$ is an operator of the form
\eqref{eq:formhextrig} on a one-point periodic graph whose vertices are $h\mathcal{L}$.
From there, one then proceeds as in the proof of Theorem \ref{TBT:hex}:

\begin{theorem}[Three Balls Inequality]\label{TBT:hex2}
Let $\mathcal{H}$ be a star periodic graph, then the Three Balls Inequality, Theorem \ref{TBT:hex},
is also valid on $\mathcal{H}$.
%
\end{theorem}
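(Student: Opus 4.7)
The plan is to mimic the strategy used for Theorem~\ref{TBT:hex}, systematically eliminating the layers $\mathcal{V}_2,\ldots,\mathcal{V}_s$ in favor of the base layer $\mathcal{V}_1$. I will identify $f:h\mathcal{V}\to\R$ with the tuple of restrictions $f_j(x):=f(hp_j+x)$, $x\in h\mathcal{L}$, $j=1,\ldots,s$. By the star condition (Definition~\ref{star:1}), for every $j\geq 2$ and $x\in h\mathcal{L}$ all neighbors of $hp_j+x$ belong to $hp_1+h\mathcal{L}$, so the equation $H_hf(hp_j+x)=0$ couples only the single value $f_j(x)$ to values of $f_1$:
\[
h^{-2}\left(\sum_{m=1}^{k_j}f_1(x+h\mathbf{d}_m^{(j)})-k_j f_j(x)\right)+V_j(x)f_j(x)=0,
\]
where $k_j$ is the degree of a layer-$j$ vertex, $V_j(x):=V(hp_j+x)$ and $\mathbf{d}_m^{(j)}\in\mathcal{L}$ are the shifts dictated by the edge structure. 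Taking $h_0$ small enough that $k_j-h^2V_j(x)>0$ uniformly, one can solve
\[
f_j(x)=\frac{1}{k_j-h^2V_j(x)}\sum_{m=1}^{k_j}f_1(x+h\mathbf{d}_m^{(j)}),\qquad j\geq 2.
\]

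Next, I substitute these formulas into the equation at $hp_1+x$. Each neighbor of $hp_1+x$ lying in layer $j$ contributes a term of the form $f_j(x+h\beta)$ for some $\beta\in p_j-p_1+\mathcal{L}$, which after substitution becomes a weighted sum of $f_1$-values at lattice shifts. The split
\[
\frac{1}{k_j-h^2V_j(y)}=\frac{1}{k_j}+\frac{h^2V_j(y)}{k_j(k_j-h^2V_j(y))}
\]
isolates a leading pure-graph contribution from a uniformly bounded residual. The former sums, together with any direct layer-$1$ edges, into a constant multiple of $h^{-2}\Delta_\mathcal{T}f_1(x)$ on a new one-point periodic graph $\mathcal{T}=(h\mathcal{L},h\mathcal{V}',h\mathcal{E}')$ whose edges are precisely the two-step shifts through layers $j\geq 2$ together with the direct layer-$1$ shifts; the $h^2$ factor in the residual cancels the external $h^{-2}$ and yields a bounded potential of the form $\sum_{e\in\mathcal{P}_L}\tilde V_e(x+he)f_1(x+he)$ with $L$ controlled by the longest two-step shift. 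Connectedness of $\mathcal{T}$ follows from that of $h\Gamma$ by concatenating consecutive edges of any path. Hence $f_1$ satisfies $H_{1,h}f_1=0$ with $H_{1,h}$ of the form \eqref{eq:formhextrig} on $\mathcal{T}$, and Theorem~\ref{TBT:4.2} applies to give a Three Balls Inequality for $f_1$ on $\mathcal{T}$.

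Finally, I transfer this inequality back to $f$. The pointwise formula for $f_j$ with $j\geq 2$ gives $|f_j(x)|^2\lesssim\sum_m|f_1(x+h\mathbf{d}_m^{(j)})|^2$ and hence $\norm{f_j}_{\ell^2(B_r)}^2\lesssim\norm{f_1}_{\ell^2(B_{r+O(h)})}^2$; moreover, the norms $|\cdot|_\Gamma$ and $|\cdot|_\mathcal{T}$ are equivalent on $\R^d$, so the two ball systems match up to fixed constants in the radii. Summing $\norm{f}_{\ell^2(B_r)}^2=\sum_{j=1}^s\norm{f_j}_{\ell^2(B_r)}^2$ and adjusting radii by the equivalence constant yields \eqref{eq:51} for $f$, from which the Hölder form \eqref{TCT:hex} follows via the optimization in $\tau$ already carried out in the proofs of Theorems~\ref{TBT:4.2} and~\ref{TBT:hex}.

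The main obstacle will be the reduction step: verifying that the effective operator $H_{1,h}$ is genuinely of the form \eqref{eq:formhextrig} on a \emph{connected} one-point periodic graph with lattice $h\mathcal{L}$, and that the effective potentials $\tilde V_e$ are uniformly bounded as $h\to 0$. The required work is essentially bookkeeping of how two-step paths through each auxiliary layer assemble into single edges of $\mathcal{T}$; a minor secondary point is the ball comparison between $\Gamma$ and $\mathcal{T}$, which contributes only benign constants.
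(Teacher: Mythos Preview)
Your proposal is correct and follows essentially the same approach as the paper: eliminate the auxiliary layers $f_2,\ldots,f_s$ via the star condition, obtain an effective equation $H_{1,h}f_1=0$ of the form \eqref{eq:formhextrig} on a one-point periodic graph $\mathcal{T}$ built on $h\mathcal{L}$, and then invoke Theorem~\ref{TBT:4.2}. The paper's own argument is only a sketch (the paragraph preceding the theorem), and you have in fact supplied more detail than it does, in particular the transfer step back from $f_1$ to the full $f$ via the pointwise bound $|f_j(x)|^2\lesssim\sum_m|f_1(x+h\mathbf{d}_m^{(j)})|^2$ and the comparison of the ball systems for $\Gamma$ and $\mathcal{T}$; both of these are left implicit in the paper.

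One very minor slip: when you write that a neighbor in layer $j$ contributes $f_j(x+h\beta)$ with $\beta\in p_j-p_1+\mathcal{L}$, the shift $\beta$ actually lies in $\mathcal{L}$ itself (by periodicity, the edge $(hp_1+x,\,hp_j+x+h\gamma)$ has $\gamma\in\mathcal{L}$); this does not affect the argument.
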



\section{Data availability}
No data has been generated or analysed during this study.

\section{Funding and/or Conflicts of interests/Competing interests}

The authors have no relevant financial or non-financial interests to disclose.

\smallskip

Y. Bourroux has benefited from state
support managed by the Agence Nationale de la
Recherche (French National Research Agency)
under reference ANR-20-SFRI-0001

\smallskip

Y. Bourroux and P. Jaming are partially supported by the French National Research Agency (ANR) under contract number ANR-24-CE40-5470.

\smallskip

A. Fern\'andez-Bertolin is partially supported by the Basque Government through the project IT1615-22, by the Spanish Agencia Estatal de Investigaci\'on through the project PID2021-122156NB-I00 funded by MICIU/AEI/10.13039/501100011033 and by Euskampus, through the LTC Sarea Transmath.

\bibliographystyle{alpha}

\end{document}